\def\a{\alpha}
\def\b{\beta}
\def\l{\lambda}
\def\g{\gamma}
\def\0{\bar{0}}
\def\1{\bar{1}}
\def\e{\epsilon}
\def\d{\delta}
\def\g{\mathfrak{g}}
\def\m{\mathfrak m}
\def\g{{\mathfrak g}}
\newtheorem{lemma}{Lemma}[section]
\newtheorem{theorem}[lemma]{Theorem}
\newtheorem{proposition}[lemma]{Proposition}
\newtheorem{definition}[lemma]{Definition}
\newtheorem{corollary}[lemma]{Corollary}
\title{ Representations of the restricted Lie superalgebra $p(n)$}
\author{Chaowen Zhang\\ Department of
Mathematics,\\ China University
 of Mining and Technology,\\ Xuzhou, 221116, Jiang Su, P. R. China}
\begin{document}
\maketitle

Mathematics Subject Classification 2010: 17B10; 17B45; 17B50\par

\section{Introduction}

Let $F$ be an algebraically closed field of characteristic $p>2$, and let $\g$ be the restricted Lie superalgebra $p(n)$ over
$F$. In this paper,  we first study the center of a quotient of the restricted enveloping superalgebra of $\g$, then apply the obtained results  to investigate the restricted  representations of $\g$. \par

 For $n\geq 2$, the Lie superalgebra $\g$ over $F$ consists of block matrices of the form

  $$
  \begin{pmatrix}
  a& b\\
  c&-a^t
  \end{pmatrix}
  $$
  where $a$ is an arbitrary $n\times n$ matrix, $a^t$ denotes the transpose of $a$, $b$ is a symmetric $n\times n$ matrix and $c$ is a skew-symmetric $n\times n$ matrix (\cite{kk, kv, sv}). \par
   It is clear that $\g_{\bar 0}\cong \mathfrak{gl}(n)$ and hence $\g_{\0}'\cong \mathfrak{sl}(n)$ (\cite{kv, sv}).  The Lie superalgebra $\g$ has the $\mathbb{Z}$-gradings $\g=\g_{-1}\oplus \g_{\bar 0}\oplus \g_1$, where $\g_{-1}$ (resp. $\g_1$) consists  of above matrices  with $a=0$ and $b=0$ (resp. $a=0$ and $c=0$). \par

    According to \cite{kk, sv}, the universal enveloping superalgera of the Lie superalgebra $p(n)$ over $\mathbb C$ has a nontrivial radical.  Applying a similar idea, we define a nontrivial nilpotent ideal $I$ for the restricted enveloping superalgebra $u(\g)$. Then the study of  simple $u(\g)$-modules is reduced to  that of the  quotient superalgebra $\bar{\mathfrak u}=u(\g)/I$. \par
   The paper is organized as follows. Section 2 gives the preliminaries.  In Section 3 we define the quotient superalgebra   $\bar{\mathfrak u}$ and study its center.  Using the results from this section we study restricted representations  of $\g$ in Section 4.

   \section{Preliminaries}
    A Lie superalgebra $L=L_{\bar 0}\oplus L_{\bar 1}$ is called restricted if $L_{\bar 0}$ is a restricted Lie algebra and if $L_{\1}$ is a restricted $L_{\bar 0}$-module under the adjoint action.  Define the p-mapping $[p]$ on $\g_{\0}$ to be the $p$-th power for each matrix in $\g_{\0}$. Then $\g_{\0}$ becomes a restricted Lie algebra and   $\g$ is a restricted Lie superalgebra.\par
 Let $L=L_{\0}\oplus L_{\1}$ be a restricted Lie superalgebra, let $U(L)$ be its universal enveloping algebra and  let $M=M_{\0}\oplus M_{\bar 1}$ be a $L$-module. We say that $M$ admits a $p$-character $\chi\in L_{\0}^*$ if $(x^p-x^{[p]}-\chi^p(x)\cdot 1)M=0$ for all $x\in L_{\bar 0}$, where $x^p$ denotes the $p$-power of $x$ in $U(L)$.  According to \cite[Lemma 2.1]{zc1}, each simple $L$-module admits a unique $p$-character.\par
   Let $\chi\in\g_{\0}^*$ and let $I_{\chi}$ be the (super) ideal of $U(\g)$ generated by elements $x^p-x^{[p]}-\chi(x)^p$, $x\in\g_{\0}$. The superalgebra $u_{\chi}(\g)=U(\g)/I_{\chi}$ is called the $\chi$-reduced enveloping algebra of $\g$. If $\chi=0$, the superalgebra $u_{\chi}(\g)$,  referred to as the reduced enveloping algebra of $\mathfrak g$, is simply denoted $u(\g)$. The $\chi$-reduced enveloping algebra $u_{\chi}(\g_{\0})$ of the Lie algebra $\g_{\0}$ is defined similarly (see \cite[5.3]{sf}). In particular, $u_{\chi}(\g_{\0})$ may be viewed  as a subalgebra of $u_{\chi}(\g)$.\par

   Let $\mathfrak h$ be the  maximal torus of $\g_{\0}$ consisting of diagonal matrices. Then $\mathfrak h$ has a  basis $$h_1=:e_{11}-e_{n+1,n+1}, \dots, h_n:=e_{nn}-e_{2n,2n}.$$ Let $\epsilon_1,\dots, \epsilon_n\in \mathfrak h^*$ be the basis defined by $\epsilon_i(h_j)=\delta_{ij}$.
   Take the  natural basis of $\g$ as follows. For $1\leq i<j\leq n$,  let $y_{ij}=:e_{i+n, j}-e_{j+n,i}\in \g_{-1}$;  for  $1\leq i\leq j\leq n$, let $z_{ij}=e_{i,j+n}+e_{i+n,j}\in \g_1$; for  $1\leq i,j\leq n$, let $\tilde e_{ij}=e_{ij}-e_{j+n,i+n}\in \g_{\0}$. These are root vectors of $\g$  and  the roots are of the following form.

$$
\begin{aligned}
\text{Roots\  of}\ & \g_{\0}:&& \e_i-\e_j,&&&  1\leq i\neq j\leq n;\\
\text{Roots\  of}\ &  \g_{-1}:&& -\e_i-\e_j,&&& 1\leq i<j\leq n;\\
\text{Roots\  of}\ &  \g_1:&& \e_i+\e_j,&&& 1\leq i\leq j\leq n.
\end{aligned}
$$
Note that $\text{dim}\g_{-1}=d=:{1\over 2}n(n-1)$ and $\text{dim}\g_1=\frac{1}{2}n(n+1)$.\par

 Denote by $G$ the  linear algebraic group $GL_n(F)$. Let $$\rho:\  GL_n(F)\longrightarrow GL_{2n}(F)$$ be defined  by $\rho (a)=\text{diag} (a, a^{-1,t})$. Then $\rho$ is a monomorphism of algebraic groups (\cite[7.4]{hu}). Under the action  $$ Ad\rho(a) x=\rho(a)x\rho (a)^{-1},\  a\in G, x\in \g,$$ the Lie superalgebra $\g$ becomes a (rational) $G$-module (see \cite[Corollary 10.3]{hu}).  Then  $U(\g)$ is also a $G$-module.

 \par
Let $g\in G$ and let $$x=\begin{pmatrix}0&b\\
                               0& 0 \end{pmatrix}\in \g_1.$$ Then we get $$Ad\rho(g) x=\begin{pmatrix}0&gbg^t\\
                               0& 0 \end{pmatrix}\in \g_1.$$
  Hence $\g_1$ is a $G$-submodule of $\g$ by \cite[2.9(2), Part I]{jj1}. Similarly $\g_{-1}$ is also a $G$-submodule of $\g$.\par

Let $\g^+=\g_{\0}\oplus \g_1$. According to \cite[Lemma 2.4]{zc1}, each simple $U(\g^+)$-module is annihilated by $\g_1$. Let $\chi\in \g^*_{\0}$, and let $M$ be a simple $u_{\chi}(\g_{\0})$-module viewed as a $u_{\chi}(\g^+)$-module by letting $\g_1\cdot M=0$. Define the Kac module $$K_{\chi}(M)=u_{\chi}(\g)\otimes _{u_{\chi}(\g^+)}M.$$
Since $M$ is finite dimensional (\cite[Theorem 5.24]{sf}),  so is $K_{\chi}(M)$. If $\chi=0$, then we denote $K_{\chi}(M)$  simply by $K(M)$.\par
   We have $\g_{-1}$-module isomorphism $K_{\chi}(M)\cong \wedge (\g_{-1})\otimes_F M$, where $\wedge (\g_{-1})$ is the exterior algebra of $\g_{-1}$. Let $v_1,\dots, v_d$ be a basis of $\g_{-1}$. Set $$\wedge^k(\g_{-1})=\langle v_{i_1}\cdots v_{i_k}\in \wedge (\g_{-1})|1\leq i_1\leq \cdots\leq i_k\leq d\rangle.$$ Denote $\wedge_{\0}(\g_{-1})=\sum_{k=2i}\wedge^k(\g_{-1})$ and $\wedge_{\1}(\g_{-1})=\sum_{k=2i+1}\wedge^k(\g_{-1})$. Then $\wedge (\g_{-1})$ is $\mathbb Z_2$-graded with $\wedge(\g_{-1})=\wedge_{\0}(\g_{-1})\oplus \wedge_{\1}(\g_{-1})$. It follows that $K_{\chi}(M)$ is $\mathbb Z_2$-graded with $$K_{\chi}(M)_{\bar i}=\wedge_{\bar i} (\g_{-1})\otimes_{F} M, \ i=0,1.$$

\begin{lemma}Let $\chi\in \g_{\0}^*$ and let $\m=\m_{\0}\oplus \m_{\1}$ be a simple $u_{\chi}(\g)$-module. Then there is an (even) epimorphism $K_{\chi}(M)\longrightarrow \m$ with $M\subseteq \m_{\0}$  a simple $u_{\chi}(\g_{\0})$-submodule.
\end{lemma}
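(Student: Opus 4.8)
\emph{Proof plan.} The plan is to produce the epimorphism by Frobenius reciprocity, once a copy of a suitable simple $u_\chi(\g_{\0})$--module has been located inside $\m_{\0}$. First I would restrict $\m$ to $\g^+=\g_{\0}\oplus\g_1$, and then further to the subalgebra $u(\g_1)$: since $\g_1$ consists of matrices whose pairwise products vanish, it is an abelian odd ideal of $\g^+$ and every $x\in\g_1$ satisfies $x^2=0$ already in $U(\g)$, so $u(\g_1)\cong\wedge(\g_1)$ is a finite--dimensional local superalgebra with radical $\g_1\wedge(\g_1)$, and $\m$ is a finite--dimensional $\wedge(\g_1)$--module.

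The first real step is to show that $\m^{\g_1}:=\{v\in\m:\g_1 v=0\}$ is a nonzero, $\mathbb Z_2$--graded $u_\chi(\g_{\0})$--submodule of $\m$. It is nonzero because a nonzero finite--dimensional module over the local ring $\wedge(\g_1)$ has nonzero socle and the socle is exactly $\m^{\g_1}$ (equivalently: taking $k$ minimal with $(\g_1\wedge(\g_1))^k\m=0$, the nonzero subspace $(\g_1\wedge(\g_1))^{k-1}\m$ is killed by $\g_1$). It is $\g_{\0}$--stable because $[\g_{\0},\g_1]\subseteq\g_1$ gives $x(hv)=[x,h]v=0$ for $h\in\g_{\0}$, $x\in\g_1$, $v\in\m^{\g_1}$, hence it is a $u_\chi(\g_{\0})$--submodule since the defining relations of $u_\chi(\g_{\0})$ already hold on $\m$. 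It is graded because $\g_1\subseteq\g_{\1}$: for $v=v_{\0}+v_{\1}$ the relation $\g_1 v=0$ forces $\g_1 v_{\0}=0$ and $\g_1 v_{\1}=0$ separately, so $\m^{\g_1}=(\m^{\g_1}\cap\m_{\0})\oplus(\m^{\g_1}\cap\m_{\1})$ as $u_\chi(\g_{\0})$--modules. (This is the reduced analogue of the fact, used in the definition of $K_\chi(M)$ and cited there from \cite{zc1}, that $\g_1$ acts trivially on simple modules of $\g^+$.)

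Next I would choose a simple $u_\chi(\g_{\0})$--submodule $M\subseteq\m^{\g_1}$ lying in $\m_{\0}$. At least one of the two graded pieces of $\m^{\g_1}$ is a nonzero $u_\chi(\g_{\0})$--module; adopting the standard convention that $\m$ is considered together with its parity shift $\Pi\m$ (which merely interchanges $K_\chi(M)$ with $\Pi K_\chi(M)$), we may assume $\m^{\g_1}\cap\m_{\0}\neq 0$ and pick a simple $u_\chi(\g_{\0})$--submodule $M$ inside it. Since $\g_1 M=0$ while $M$ is $\g_{\0}$--stable and simple over $\g_{\0}$, $M$ is a simple $u_\chi(\g^+)$--submodule of $\m$, namely the inflation to $u_\chi(\g^+)$ of the simple $u_\chi(\g_{\0})$--module $M$ along $\g^+\twoheadrightarrow\g^+/\g_1=\g_{\0}$. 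The tensor--hom adjunction for $u_\chi(\g^+)\subseteq u_\chi(\g)$ then gives
$$\Hom_{u_\chi(\g)}\bigl(K_\chi(M),\m\bigr)=\Hom_{u_\chi(\g)}\bigl(u_\chi(\g)\otimes_{u_\chi(\g^+)}M,\,\m\bigr)\cong\Hom_{u_\chi(\g^+)}(M,\m),$$
so the inclusion $M\hookrightarrow\m$ corresponds to a $u_\chi(\g)$--homomorphism $\phi\colon K_\chi(M)\to\m$, $\phi(u\otimes m)=um$. It is even because $M$ sits in degree $\0$ of $K_\chi(M)$ and in $\m_{\0}$; it is nonzero because $u_\chi(\g)$ is free over $u_\chi(\g^+)$ by PBW, so $1\otimes M$ injects into $K_\chi(M)$ and $\phi(1\otimes M)=M\neq 0$; and since $\m$ is simple, $\phi$ is surjective. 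This is the asserted epimorphism.

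The step I expect to be the main obstacle is not the adjunction but the input to it: the nonvanishing and placement of $M$. Showing $\m^{\g_1}\neq 0$ is the (standard) nilpotency of the $\g_1$--action, but arranging $M\subseteq\m_{\0}$ is genuinely delicate, since the literal statement demands it while $\m^{\g_1}$ could a priori be purely odd — this is exactly what the parity convention above absorbs; everything after that is formal.
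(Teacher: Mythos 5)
Your proof is correct and follows essentially the same strategy as the paper: locate a simple $u_\chi(\g_{\0})$-submodule $M$ of $\m_{\0}$ annihilated by $\g_1$ (using nilpotency of the $\g_1$-action plus a parity adjustment), then invoke the universal property of the Kac module $K_\chi(M)$. The only minor difference is that you pass directly to the socle $\m^{\g_1}$ of the $\wedge(\g_1)$-action, whereas the paper fixes a simple $N\subseteq\m_{\0}$ and takes the top nonvanishing $\wedge^k(\g_1)N$ — which lands automatically in a single graded piece and thus skips your separate verification that $\m^{\g_1}$ is $\mathbb Z_2$-graded.
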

\begin{proof}
Let $N\subseteq \m_{\0}$ be a simple $u_{\chi}(\g_{\0})$-submodule. Then  $\wedge^k (\g_1) N\subseteq \m_{\0}$ if $k$ is even, and $\wedge^k (\g_1)N\subseteq \m_{\1}$ if $k$ is odd. It is clear that each $\wedge^k (\g_1) N$ is a $u_{\chi}(\g_{\0})$-submodule of $\m$. Let $k$ be the largest such that $\wedge^k (\g_1) N\neq 0$, and let $M\subseteq \wedge^k (\g_1)N$ be a simple $u_{\chi}(\g_{\0})$-submodule. Then we have $\g_1 M=0$. We may renumber the $\mathbb Z_2$-gradings of $\m$ if necessary, so we always assume $M\subseteq \m_{\0}$. Then since $\m=\m_{\0}\oplus \m_{\1}$ is simple, we obtain an (even) epimorphism from $K_{\chi}(M)$ to $\m$.
\end{proof}
Let $\mathfrak n_0^+$ (resp. $\mathfrak n_0^-$)  be the Lie subalgebra of $\g_{\0}$ spanned by positive root vectors $$\tilde e_{ij}=e_{ij}-e_{j+n,i+n}, \  i<j\ (\text{resp.} \ i>j).$$
\begin{definition} Let $\m=\m_{\0}\oplus \m_{\1}$ be a $\g$-module.   A nonzero (homogeneous) vector $v\in \m$ is  {\sl maximal}  of weight $\mu=\mu_1\e_1+\cdots +\mu_n\e_n\in \mathfrak h^*$ if  $(\mathfrak n_0^++\g_1)\cdot v=0$ and if $h_i\cdot v=\mu_iv$, $i=1,\dots, n$.
\end{definition}

 If $\chi (\mathfrak n_0^+)=0$, then each simple $u_{\chi}(\g)$-module  has  a maximal vector (see \cite[Section 2]{zc1}).  For $\mu=\sum^n_{i=1}\mu_i\e_i\in \mathfrak h^*$, set $\delta_{\mu}=:\Pi_{i<j}(\mu_i-\mu_j+j-i-1)\in F$.
 \par
 In the following proposition, we assume  $\chi(\mathfrak n_0^+)=0$ and let $M$ be a simple $u_{\chi}(\g_{\0})$-module generated by a maximal vector of weight $\mu\in\mathfrak h^*$. \par

\begin{proposition} If $\delta_{\mu}\neq 0$, then  $K_{\chi}(M)$ is simple.
\end{proposition}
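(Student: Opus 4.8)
The plan is to reduce simplicity of $K_\chi(M)$ to the non-existence of ``unwanted'' maximal vectors and then to locate $\delta_\mu$ as the arithmetic obstruction to such vectors. I would start from the $\g_{-1}$-module isomorphism $K_\chi(M)\cong\wedge(\g_{-1})\otimes_F M$ recalled above. Writing $v^+\in M$ for the given maximal vector of weight $\mu$, the element $1\otimes v^+$ generates $K_\chi(M)$ over $u_\chi(\g)$ (use the triangular decomposition $u_\chi(\g)\cong u_\chi(\g_{-1})\otimes u_\chi(\g^+)$ of vector spaces) and is itself a maximal vector of weight $\mu$; moreover it spans the whole $\mu$-weight space of $K_\chi(M)$. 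Indeed, applying the trace functional $\lambda\mapsto\sum_i\lambda_i$ to an $\mathfrak h$-weight of $\wedge^k(\g_{-1})\otimes M$ yields $\sum_i\mu_i-2k$ (each root of $\g_{-1}$ contributes $-2$, each simple root of $\g_{\bar 0}$ contributes $0$, and $M$ is spanned by vectors of weights $\mu-\sum c_a\alpha_a$ with $c_a\ge0$), so the value $\sum_i\mu_i$ is attained only for $k=0$, where the weight space is $1\otimes M_\mu=F(1\otimes v^+)$. Hence it suffices to show that every nonzero $u_\chi(\g)$-submodule $N$ of $K_\chi(M)$ contains a maximal vector of weight $\mu$, because such a vector then lies in $F(1\otimes v^+)$ and $N=K_\chi(M)$.

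To produce a maximal vector inside an arbitrary nonzero $N$, I would use that $\g_1$ is abelian and odd, so $u_\chi(\g_1)=\wedge(\g_1)$ is a finite-dimensional local algebra with nilpotent augmentation ideal; hence $N^{\g_1}:=\{w\in N:\g_1 w=0\}\neq0$. Since $[\g_1,\g_{\bar 0}]\subseteq\g_1$, the subspace $N^{\g_1}$ is a nonzero $u_\chi(\g_{\bar 0})$-submodule, and, as $\chi(\mathfrak n_0^+)=0$, it contains an $\mathfrak n_0^+$-maximal $\mathfrak h$-weight vector $w$ (the argument used for simple $u_\chi(\g)$-modules applies, $u_\chi(\mathfrak n_0^+)$ being local and $\mathfrak h$ acting semisimply); then $w$ is killed by $\mathfrak n_0^++\g_1$, so it is a maximal vector of $K_\chi(M)$ lying in $N$. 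Decompose $w=\sum_{k\ge0}w_k$ by exterior degree. Since $\mathfrak n_0^+$ and $\mathfrak h$ preserve this degree while $\g_1$ lowers it by one, each nonzero $w_k$ is again a maximal vector. If $w_k=0$ for all $k\ge1$, then $0\neq w\in N\cap(1\otimes M)$, which, being a nonzero $u_\chi(\g_{\bar 0})$-submodule of the simple module $1\otimes M$, equals $1\otimes M$; hence $1\otimes v^+\in N$ and $N=K_\chi(M)$. Otherwise $K_\chi(M)$ carries a nonzero maximal vector homogeneous of exterior degree $k\ge1$, necessarily of weight $\neq\mu$ (the $\mu$-weight space sits in degree $0$). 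Thus the proposition reduces to: \emph{if $\delta_\mu\neq0$, then $\wedge^k(\g_{-1})\otimes M$ has no maximal vector for any $k\ge1$.}

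This last statement is the technical heart and, I expect, the main obstacle. I would attack it by making the $\g_1$- and $\mathfrak n_0^+$-actions on $\wedge^k(\g_{-1})\otimes M$ explicit: since $\g_{-1}$ is abelian with $[\g_1,\g_{-1}]\subseteq\g_{\bar 0}$ and $[[\g_1,\g_{-1}],\g_{-1}]\subseteq\g_{-1}$, an element $z\in\g_1$ acts on a monomial $y_{i_1j_1}\cdots y_{i_kj_k}\otimes m$ by an alternating sum of ``contractions'' — deleting one factor $y_{i_tj_t}$ and letting $[z,y_{i_tj_t}]\in\g_{\bar 0}$ act on $m$, plus correction terms from double brackets — so it drops the exterior degree by one, its leading part being governed by the bilinear pairing $\g_1\times\g_{-1}\to\g_{\bar 0}\to\mathrm{End}(M)$. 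Expanding a hypothetical maximal vector $w\in\wedge^k(\g_{-1})\otimes M$ in the monomial basis $y_{i_1j_1}\cdots y_{i_kj_k}\otimes(\text{weight vectors of }M)$ and imposing $\mathfrak n_0^+w=0$ and $\g_1w=0$ produces a homogeneous linear system for the coefficients of $w$; one finds that the coefficient of the top monomial is multiplied, through the relations, by a product of scalars of the form $\mu_i-\mu_j+j-i-1$, hence must vanish once $\delta_\mu=\prod_{i<j}(\mu_i-\mu_j+j-i-1)\neq0$, forcing $w=0$. The delicate point is to run this uniformly over all $k\ge1$: for $k\ge2$ a maximal vector can occur only when several factors of $\delta_\mu$ vanish at once, so one must control the full product rather than a single factor. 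As an alternative I would try to equip $K_\chi(M)$ with a contravariant bilinear form — pairing it against the ``lowest-weight'' Kac module $u_\chi(\g)\otimes_{u_\chi(\g_{-1}\oplus\g_{\bar 0})}(-)$ — whose radical is the unique maximal submodule and whose Gram determinant on each $\mathfrak h$-weight space factors, up to a nonzero scalar, into powers of the linear forms $\mu_i-\mu_j+j-i-1$; simplicity of $K_\chi(M)$ is then exactly the non-vanishing of this Shapovalov-type determinant, i.e.\ $\delta_\mu\neq0$.
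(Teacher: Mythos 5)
The central step of your argument — ``if $\delta_\mu\neq0$, then $\wedge^k(\g_{-1})\otimes M$ has no maximal vector for any $k\geq1$'' — is stated but not proved, and you flag it yourself as the main obstacle. Neither of the two attacks you sketch (an explicit linear system on monomial coefficients, or a contravariant form with a Shapovalov-type Gram determinant) is carried out, and for $k\geq 2$ it is genuinely unclear that the contraction combinatorics reduce to a clean product of the linear forms $\mu_i-\mu_j+j-i-1$. Since this claim is essentially as strong as the proposition itself, the reduction you set up buys nothing without it. There is also a secondary flaw: your assertion that the $\mu$-weight space of $K_\chi(M)$ lies entirely in exterior degree $0$ rests on the trace relation $\sum_i\mu_i-2k=\sum_i\mu_i\Rightarrow k=0$, but over a field of characteristic $p$ this only forces $p\mid k$, which does not give $k=0$ once $d=\tfrac12 n(n-1)\geq p$, a case not excluded by the hypothesis $p>2$ in force where Proposition 2.3 is stated.

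The paper avoids classifying maximal vectors entirely, and moves in the opposite direction from your descent to degree $0$. It takes a simple submodule $\mathfrak m\subseteq K_\chi(M)$, applies $y_{ij}$'s to any nonzero element to push \emph{up} to the top exterior power, obtaining $Y\otimes M\subseteq\mathfrak m$ with $Y=\prod_{i<j}y_{ij}$, and then applies $Z=\prod_{i<j}z_{ij}$ and invokes the concrete identity $ZY\otimes v_\mu=\pm\,\delta_\mu\otimes v_\mu$ (a modular analogue of \cite[Lemma 3.1]{kv}) to land directly back at $1\otimes v_\mu$ with the factor $\delta_\mu$ extracted. Since $\delta_\mu\neq0$, this gives $1\otimes v_\mu\in\mathfrak m$ and hence $\mathfrak m=K_\chi(M)$. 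The missing idea in your proposal is precisely this ``$Y$ up, $Z$ down'' computation, which produces $\delta_\mu$ in a single step, requires no control over maximal vectors in intermediate degrees, and works with no constraint on $p$ beyond $p>2$.
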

\begin{proof} For an arbitrarily fixed order of the basis vectors $y_{ij}$'s  of $\g_{-1}$, set $Y=\Pi_{i<j}y_{ij}\in \wedge^d (\g_{-1})$.\par
 Let $\m\subseteq K_{\chi}(M)$ be a simple $u_{\chi}(\g)$-submodule.
 We claim that $Y\otimes M\subseteq \m$. To see this, take a nonzero vector $$x=\sum_{i_1,\dots, i_k}  y_{i_1,\dots, i_k}\otimes m_{i_1,\dots, i_k}\in \m,$$ where each $y_{i_1,\dots, i_k}$ is a monomial in $\wedge^k(\g_{-1})$  and $m_{i_1,\dots, i_k}\in M$. By applying appropriate $y_{ij}$'s to $x$, we obtain  $Y\otimes m\in \m$ for some nonzero element $m\in M$. Since $FY$ is a $1$-dimensional $\text{ad}_{\g_{\0}}$-module,  and since $M$ is a simple $u_{\chi}(\g_{\0})$-module, we obtain $Y\otimes M\subseteq \m$.\par
     Set $Z=\Pi_{i<j}z_{ij}$, where the $z_{ij}$'s are in an arbitrary fixed order. Let $v_{\mu}\in M$ be the maximal vector of the weight $\mu$. Applying a similar argument as that used in the  proof of \cite[Lemma 3.1]{kv}, we have $$ZY\otimes v_{\mu}=\pm \delta_\mu \otimes v_{\mu}\in \mathfrak m.$$ Since $\delta_{\mu}\neq 0$, we have $1\otimes v_{\mu}\in \m$, and hence $\m=K_{\chi}(M)$,  implying  that $K_{\chi}(M)$ is simple.
\end{proof}

\section{The quotient algebra $\bar {\mathfrak u}$}
\subsection{ A nilpotent (super) ideal of $u(\g)$}
Since $\g$ is a restricted Lie superalgebra,  the  element $x^p-x^{[p]}$ is central in $U(\g)$ for every $x\in \g_{\0}$.
By \cite[Theorem 5.1.2]{sf}, these elements generates a polynomial algebra $\mathcal O$ inside $U(\g_{\0})\subseteq U(\g)$. Let $x\in\g_{\0}$. Since $x^{[p]}$ is the $p$-th power of the matrix $x$, we have $gx^{[p]}g^{-1}=(gxg^{-1})^{[p]}$ for every $g\in G$. Therefore,  $\mathcal O$ is $G$-invariant. Then the (super) ideal $I_0$ defined in Section 2 is a $G$-submodule of $U(\g)$. It follows that $u(\g)$ is a $G$-module.\par

In the following we assume $p>3$. Let $x_1,\dots, x_t$ be a basis of $\g_{\0}$, $y_1,\dots, y_d$ be a basis of $\g_{-1}$ and $z_1, \dots, z_l$ be a basis of $\g_1$.  By \cite[Theorem 2.5]{bmpz} $u(\g)$ has a basis of the form
$$y^{\d}x^sz^{\d'}=:y_1^{\d_1}\cdots y_d^{\d_d}x_1^{s_1}\cdots x_t^{s_t}z_1^{\d'_1}\cdots z_l^{\d'_l},\  \d_i,\d'_i=0,1,\ 0\leq s_j\leq p-1.$$
 It follows that $$u(\g)\cong \wedge (\g_{-1})\otimes u(\g_{\0})\otimes \wedge (\g_1).$$
\par
 If $y=y_{i_1}\cdots y_{i_q}\in \wedge^q (\g_{-1})$ or $z=z_{i_1}\cdots z_{i_q}\in\wedge^q (\g_1)$, we call $q$ the length of $y$ or $z$, and write $l(y)=q$ or $l(z)=q$.
The following lemma is proved in \cite{kk} for the Lie superalgebra $p(n)$ over  $\mathbb C$. Applying almost verbatim the proof there we obtain the following conclusion in $u(\g)$.
\begin{lemma}(cf. \cite[Lemma 2.3(b)]{kk}) Let $y\in \wedge(\g_{-1})$ and $z\in \wedge(\g_1)$ are two monomials. Then $zy=\sum_{\a} u_{\a}y_{\a}z_{\a}$, where $u_{\a}\in u(\g_{\0})$, $y_{\a}\in \wedge (\g_{-1})$ and $z_{\a}\in\wedge (\g_1)$ such that $l(y_{\a})-l(z_{\a})=l(y)-l(z)$.
\end{lemma}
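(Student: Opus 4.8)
The plan is to deduce the lemma from the fact that the $\mathbb Z$-grading $\g=\g_{-1}\oplus\g_{\0}\oplus\g_1$ descends to $u(\g)$. Indeed, the ideal $I_0$ with $u(\g)=U(\g)/I_0$ is generated by the elements $x^p-x^{[p]}$ for $x\in\g_{\0}$, and each such generator is $\mathbb Z$-homogeneous of degree $0$: the element $x$ sits in the degree-$0$ part $\g_{\0}$, so $x^p$ is a product of $p$ elements of degree $0$, while $x^{[p]}$, being the matrix $p$-th power of $x\in\g_{\0}\cong\mathfrak{gl}(n)$, again lies in $\g_{\0}$. Hence $I_0$ is a homogeneous ideal of the $\mathbb Z$-graded algebra $U(\g)$, and $u(\g)$ inherits a $\mathbb Z$-grading in which $\g_{-1},\g_{\0},\g_1$ occupy degrees $-1,0,1$. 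In particular $u(\g_{\0})$ lies in degree $0$, a length-$k$ monomial of $\wedge(\g_{-1})$ lies in degree $-k$, and a length-$k$ monomial of $\wedge(\g_1)$ lies in degree $k$.

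Granting this, the lemma follows at once. From the decomposition $u(\g)\cong\wedge(\g_{-1})\otimes u(\g_{\0})\otimes\wedge(\g_1)$ recorded above, every element of $u(\g)$ is a sum of monomials $y_\a u_\a z_\a$ with $y_\a\in\wedge(\g_{-1})$, $u_\a\in u(\g_{\0})$, $z_\a\in\wedge(\g_1)$; moving each $u_\a$ to the left past $y_\a$ (possible since $[\g_{\0},\g_{-1}]\subseteq\g_{-1}$, and affecting no $\mathbb Z$-degrees) we may rewrite this in the form $\sum_\a u_\a y_\a z_\a$. Apply this to $zy$. The monomial $zy$ is $\mathbb Z$-homogeneous of degree $l(z)-l(y)$, whereas a term $u_\a y_\a z_\a$ is homogeneous of degree $l(z_\a)-l(y_\a)$. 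Taking the component of $\mathbb Z$-degree $l(z)-l(y)$ in the identity $zy=\sum_\a u_\a y_\a z_\a$ therefore discards every term with $l(z_\a)-l(y_\a)\neq l(z)-l(y)$ and leaves precisely the asserted identity with $l(y_\a)-l(z_\a)=l(y)-l(z)$.

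The one point requiring genuine verification is that $u(\g)$ really does inherit the $\mathbb Z$-grading, i.e. that $x^p-x^{[p]}$ is $\mathbb Z$-homogeneous for every $x\in\g_{\0}$; this is where restrictedness enters, via $\g_{\0}\cong\mathfrak{gl}(n)$ being restricted with $[p]$-map the matrix $p$-th power, so that $x^{[p]}\in\g_{\0}$ lies in degree $0$. After that everything is formal, and no induction or explicit commutator computation is needed. Should one instead prefer to follow \cite[Lemma 2.3(b)]{kk} verbatim, one inducts on the pair $(l(z),l(y))$, pushing one $\g_1$-factor at a time rightward past $\g_{\0}$- and $\g_{-1}$-factors via $[\g_1,\g_{-1}]\subseteq\g_{\0}$, $[\g_{\0},\g_{\pm1}]\subseteq\g_{\pm1}$ and $[\g_1,\g_1]=[\g_{-1},\g_{-1}]=0$; each commutator into $\g_{\0}$ lowers $l(y)$ and $l(z)$ simultaneously, and the reorderings through $u(\g_{\0})$ leave $l(y)-l(z)$ untouched, which is exactly the bookkeeping that the grading argument automates.
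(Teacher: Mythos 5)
Your proof is correct, but it takes a genuinely different route from the paper's. The paper simply cites Kirkman--Kuzmanovich \cite[Lemma 2.3(b)]{kk} and says the argument there carries over ``almost verbatim''; that argument is the explicit commutator induction you sketch at the end of your proposal, pushing one $\g_1$-factor at a time past $\g_{\0}$- and $\g_{-1}$-factors via $[\g_1,\g_{-1}]\subseteq\g_{\0}$, $[\g_{\0},\g_{\pm1}]\subseteq\g_{\pm1}$, $[\g_{\pm1},\g_{\pm1}]=0$. You instead establish the $\mathbb{Z}$-grading on $u(\g)$ first (the grading on $\g$ passes to $T(\g)$, the defining ideal of $U(\g)$ is homogeneous because $[\g_i,\g_j]\subseteq\g_{i+j}$, and $I_0$ is homogeneous of degree $0$ because $x^{[p]}\in\g_{\0}$ for $x\in\g_{\0}$) and then read off the lemma by matching degrees of $zy$ against degrees of the PBW basis elements. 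The noteworthy point is that this reverses the logical order in the paper: immediately after Lemma~3.1 the paper uses it to deduce that $u(\g)$ is $\mathbb{Z}$-graded, whereas you obtain the grading independently and derive the lemma from it. There is no circularity, since your grading argument invokes only the homogeneity of the relations defining $U(\g)$ and of the generators of $I_0$, not Lemma~3.1 itself. Your approach is more conceptual and automates the bookkeeping; the paper's (borrowed) approach is self-contained at the level of commutator manipulations. The one place where your write-up is slightly terse is the reordering of $y_\a u_\a$ to $u_\a y_\a$: one should note that each commutator $[\g_{-1},\g_{\0}]\subseteq\g_{-1}$ replaces a $\g_{-1}$-factor by another $\g_{-1}$-factor without changing the count, so the lengths $l(y_\a)$, $l(z_\a)$ are preserved even though the resulting $y_\a$ need no longer be a monomial in the chosen basis of $\g_{-1}$ --- which is fine, since the lemma only requires $y_\a\in\wedge(\g_{-1})$ of a given length.
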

 Let $u(\g)_k$ be spanned by above basis elements of $u(\g)$ with $\sum_j{\d_j'}-\sum_i{\d_i}=k$. Then we get $$u(\g)=\oplus _{k=-d}^{l}u(\g)_k.$$ By Lemma 3.1,  we obtain  $u(\g)_iu(\g)_j\subseteq u(\g)_{i+j}$. So that $u(\g)$ is a $\mathbb Z$-grade algebra.
 It is easy to see that each $u(\g)_k$ is a  $G$-submodule.\par

Let $I$ denote the (super) ideal of $u(\g)$ generated by $\wedge^k (\g_1)$ with $k>d$. Then $I$ is spanned by  basis elements  $y^{\d}x^sz^{\d'}$  with $\sum \d'_j\geq d+1$. It follows that $I$ is a $G$-submodule of $u(\g)$ such that $$I\subseteq \sum_{k\geq 1}u(\g)_k.$$ Applying Lemma 3.1 we have $I^s\subseteq \sum_{k\geq s}u(\g)_k$ for any $s\geq 1$, implying that $I$ is nilpotent.\par

We say that $\mu\in\mathfrak h^*$ is compatible with $\chi\in\g_{\0}^*$ if $\mu (h)^p-\mu (h)=\chi (h)^p$ for all $h\in \mathfrak h$.
 Set $\mathfrak b_0=:\mathfrak h+\mathfrak n_0^+\subseteq \g_{\0}$. \par
 Let $\chi\in\g_{\0}^*$ be such that $\chi(\mathfrak n^+_0)=0$ and let $\mu$ be compatible with $\chi$.  Let $Fv_{\mu}$ be the 1-dimensional $\mathfrak b_0 +\g_1$-module of weight $\mu\in \mathfrak h^*$ and annihilated by $\mathfrak n_0^++\g_1$. Define the baby Verma (super) module $$Z_{\chi}(\mu)=u_{\chi}(\g)\otimes _{u_{\chi}(\mathfrak b_0 +\g_1)}  Fv_{\mu}.$$ If $\chi=0$, we denote it simply by $Z(\mu)$.\par

The following lemma follows immediately from Lemma 3.1.
\begin{lemma} (1) Let $K_{\chi}(M)$ be a Kac module. Then  $I  K_{\chi}(M)=0$.\par
 (2) Let $Z_{\chi}(\mu)$ be a baby Verma module. Then  $I Z_{\chi}(\mu)=0$.
\end{lemma}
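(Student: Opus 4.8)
The plan is to prove the stronger claim that every monomial $z_{i_1}\cdots z_{i_k}\in\wedge^k(\g_1)$ with $k\ge d+1$ annihilates the module in question. Since, by the description above, $I$ is spanned by the basis elements $y^{\d}x^sz^{\d'}$ with $\sum_j\d'_j\ge d+1$, and since $(y^{\d}x^sz^{\d'})\cdot\xi=y^{\d}x^s\cdot(z^{\d'}\xi)$ with $z^{\d'}$ a monomial of length $\ge d+1$ in $\wedge(\g_1)$, this claim gives $I K_{\chi}(M)=0$ and $I Z_{\chi}(\mu)=0$ at once. Fix such a monomial and call it $\zeta$, so $l(\zeta)=k\ge d+1$.

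I would treat both modules through a common mechanism: each is, by the PBW basis of $u_{\chi}(\g)$, spanned by vectors $y'\cdot w$ with $y'\in\wedge(\g_{-1})$ (hence $l(y')\le d$) and $w$ ranging over a subspace $W$ annihilated by $\g_1$. For the Kac module one takes $W=1\otimes M$, and $\g_1 W=0$ holds by the very construction of $K_{\chi}(M)$. For the baby Verma module one takes $W=u_{\chi}(\mathfrak n_0^-)\cdot(1\otimes v_{\mu})$; here $\g_1 W=0$ must be checked, which I would do by induction on word length: for $z\in\g_1$ and $w_1,\dots,w_r\in\mathfrak n_0^-\subseteq\g_{\0}$, writing $zw_1\cdots w_r=w_1(zw_2\cdots w_r)+[z,w_1]w_2\cdots w_r$ and using $[\g_1,\g_{\0}]\subseteq\g_1$ reduces to strictly shorter words, the base case being $z\cdot(1\otimes v_{\mu})=0$.

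With this in hand, for a spanning vector $y'\cdot w$ I would compute $\zeta\cdot(y'\cdot w)=(\zeta y')\cdot w$ and invoke the commutation relation of Lemma 3.1 (valid already in $U(\g)$, hence in $u_{\chi}(\g)$): $\zeta y'=\sum_{\a}u_{\a}y_{\a}z_{\a}$ with $y_{\a}\in\wedge(\g_{-1})$ and $z_{\a}\in\wedge(\g_1)$ monomials satisfying $l(y_{\a})-l(z_{\a})=l(y')-l(\zeta)=l(y')-k\le d-(d+1)=-1$. Since $l(y_{\a})\ge 0$, every $z_{\a}$ has $l(z_{\a})\ge 1$, so $z_{\a}$ is a product of at least one factor from $\g_1$ and therefore $z_{\a}w=0$. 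Hence $\zeta\cdot(y'\cdot w)=0$, so both $K_{\chi}(M)$ and $Z_{\chi}(\mu)$ are killed by $\zeta$, which yields the lemma.

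The only steps requiring care are the verification that $\g_1$ annihilates $W=u_{\chi}(\mathfrak n_0^-)\cdot(1\otimes v_{\mu})$ in the baby Verma case, and the numerical observation that $\dim\g_{-1}=d$ forces the defect $l(y')-k$ to be strictly negative, so that $l(z_{\a})\ge 1$ for all $\a$; granting Lemma 3.1, everything else is immediate.
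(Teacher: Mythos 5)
Your proof is correct and is precisely the argument the paper has in mind; the paper simply states that the lemma ``follows immediately from Lemma 3.1'' and leaves the details to the reader, whereas you spell them out: write each module as $\wedge(\g_{-1})\cdot W$ with $\g_1 W=0$, use Lemma 3.1 to move a length-$(\geq d+1)$ monomial from $\wedge(\g_1)$ past $\wedge(\g_{-1})$, and observe that the length bookkeeping $l(y_\a)-l(z_\a)=l(y')-l(\zeta)\le -1$ forces $l(z_\a)\ge 1$, so the rightmost $\g_1$-factor kills $W$. The inductive check that $\g_1$ annihilates $u_\chi(\mathfrak n_0^-)v_\mu$ in the baby Verma case is the one genuine verification the paper's terse statement suppresses, and you supply it correctly. (A small alternative you could note: since $\g_1$ is abelian, $Z_\chi(\mu)$ is itself the Kac module $K_\chi(Z^0_\chi(\mu))$ built on the $\g_{\bar 0}$ baby Verma module, so part (2) is a formal consequence of part (1); but your direct induction is equally short.)
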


\subsection{The definition of  $\bar {\mathfrak u}$}
Define the quotient superalgebra $\bar{\mathfrak u}=u(\g)/I$. From Lemma 3.2  we see that each Kac module $K(M)$, each baby Verma module $Z(\mu)$ or each simple module of $u(\g)$ may be viewed as a $\bar{\mathfrak u}$-module.\par

From above discussions we have $\bar{\mathfrak u}=\oplus_{k=-d}^d \bar{\mathfrak u}_k$, where  each $\bar{\mathfrak u}_k$ has a basis  $$y^{\d}x^s z^{\d'},\ \sum_j \d'_j-\sum_i\d_i=k.$$  Set $$\mathfrak a=\langle y^{\d}x^s z^{\d'}|\ \sum_j \d'_j=\sum_i\d_i>0\rangle.$$ Then we have $\bar{\mathfrak u}_0=u(\g_{\0})\oplus \mathfrak a$. It is clear that $\mathfrak a$, $u(\g_{\0})$ and each $\bar{\mathfrak u}_k$ are all $G$-submodules of $\bar{\mathfrak u}$.

\subsection{The center of $\bar{\mathfrak u}$}
Recall the imbedding $\rho$ of $G$ into $GL_{2n}(F)$. We have $$\rho(G)=\{\text{diag}(a, a^{-1,t})|a\in GL_n( F)\}.$$
The differential $\bar\rho$: $\mathfrak{gl}(n,F)\longrightarrow \mathfrak{gl}(2n, F)$ is a Lie algebra homomorphism. Using \cite[5.4]{hu} we obtain $$\bar\rho (a)=\begin{pmatrix}a&0\\
                               0& -a^t \end{pmatrix}\in \g_{\0}, \ \  a\in \mathfrak{gl} (n,F).$$

 Define a bilinear form $\theta: \g_{\0}\times \g_{\0}\longrightarrow  F$ by letting $\theta (x, y)=\text{tr} (ab)$ $$\text{for}\ x=\text{diag} (a, -a^t),\  y=\text{diag} (b, -b^t)\in \g_{\0}.$$ Then it is easy to see that $\theta$ is symmetric, non-degenerate and $G$-invariant. Besides, the restriction of $\theta$ to $\mathfrak h$ is also non-degenerate. For each $\l\in\mathfrak h^*$, let $h_{\l}\in\mathfrak h$ be the unique element such that $\theta (h_{\l}, h)=\l (h)$ for all $h\in\mathfrak h$. It is easy to show that $h_{\e_i-\e_j}=h_i-h_j$ for any $i\neq j$.\par

  Note that $G$ satisfies the assumptions (H1)-(H3) in \cite[6.3]{jj}.
Recall the adjoint action  $Ad\rho$ of $G$ on the Lie superalgebra $\g$.
\begin{lemma} The differential of  $Ad\rho$ is $ad\bar\rho: \mathfrak{gl}(n, F)\longrightarrow \mathfrak{gl}(\g)$ given by $$ad\bar \rho (x)(y)=[\bar\rho (x), y]=\bar \rho (x) y-y\bar\rho (x),\ \ x\in \mathfrak{gl}(n, F),\  y\in \g.$$
\end{lemma}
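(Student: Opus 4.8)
The plan is to reduce the statement to the standard fact that for $\mathrm{GL}_m$ acting on its Lie algebra by the adjoint representation $\mathrm{Ad}(g)X = gXg^{-1}$, the differential at the identity is the Lie bracket $\mathrm{ad}(X)(Y) = XY - YX$. I will exploit the functoriality of taking differentials with respect to composition of morphisms of algebraic groups, together with the fact that $\rho$ is a closed embedding whose differential $\bar\rho$ has already been computed explicitly in the excerpt via \cite[5.4]{hu}.

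First I would set up the factorization. The action $\mathrm{Ad}\,\rho$ of $G = \mathrm{GL}_n(F)$ on $\g \subseteq \mathfrak{gl}(2n,F)$ is by definition the composite
$$
G \xrightarrow{\ \rho\ } \rho(G) \subseteq \mathrm{GL}_{2n}(F) \xrightarrow{\ \mathrm{Ad}\ } \mathrm{GL}(\g),
$$
where the last arrow is the restriction to the $\rho(G)$-stable (indeed $\mathrm{GL}_{2n}$-conjugation is restricted here) subspace $\g$; recall from the earlier discussion in the excerpt that $\g$, $\g_{\bar 0}$, $\g_{\pm 1}$ are all $G$-submodules under $\mathrm{Ad}\,\rho$, so this restriction makes sense as a rational representation. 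Taking differentials and using the chain rule for differentials of morphisms of algebraic groups (\cite{hu}, the functoriality underlying \cite[5.4]{hu}), I get
$$
d(\mathrm{Ad}\,\rho) = (d\,\mathrm{Ad}) \circ \bar\rho,
$$
where $\bar\rho : \mathfrak{gl}(n,F) \to \mathfrak{gl}(2n,F)$ is the map already displayed, $\bar\rho(a) = \mathrm{diag}(a,-a^t)$.

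Second I would invoke the known differential of the adjoint representation of $\mathrm{GL}_{2n}$: for $X \in \mathfrak{gl}(2n,F)$ one has $(d\,\mathrm{Ad})(X)(Y) = XY - YX$ for all $Y \in \mathfrak{gl}(2n,F)$ (this is standard, e.g.\ \cite[Corollary 10.3]{hu}, which is already cited in the excerpt for exactly the module structure in play). Restricting $Y$ to range over $\g$ and noting that $\bar\rho(x) \in \g_{\bar 0}$ so that $[\bar\rho(x),\g]\subseteq\g$, I conclude that for $x \in \mathfrak{gl}(n,F)$ and $y \in \g$,
$$
d(\mathrm{Ad}\,\rho)(x)(y) = (d\,\mathrm{Ad})(\bar\rho(x))(y) = \bar\rho(x)\,y - y\,\bar\rho(x) = [\bar\rho(x),y],
$$
which is precisely $\mathrm{ad}\,\bar\rho(x)(y)$, as claimed.

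The only genuinely delicate point — and the step I would be most careful about — is justifying that differentiation commutes with composition in this algebraic-group setting and, relatedly, that restricting the codomain of $\mathrm{Ad}$ from $\mathrm{GL}(\mathfrak{gl}(2n,F))$ down to $\mathrm{GL}(\g)$ is harmless at the level of differentials (it is, since $\g$ is a $\rho(G)$-stable subspace and the differential of a subrepresentation is the restriction of the differential). Both are formal consequences of the material in \cite[\S5]{hu} on differentials of morphisms, but I would state them carefully rather than wave them through, since the whole lemma is essentially this bookkeeping plus the explicit formula for $\bar\rho$ quoted just before the statement. Everything else is a routine matrix computation with the block form $\bar\rho(a)=\mathrm{diag}(a,-a^t)$.
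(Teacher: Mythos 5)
Your proposal is correct and follows essentially the same route as the paper's proof: both factor $\mathrm{Ad}\,\rho = \mathrm{Ad}\circ\rho$, apply the chain rule from \cite[5.4]{hu} to get $d(\mathrm{Ad}\,\rho) = d(\mathrm{Ad})\circ\bar\rho$, and then invoke the standard fact that the differential of $\mathrm{Ad}$ is $\mathrm{ad}$ (the paper cites \cite[Theorem 10.4]{hu} for this last step, while you point to \cite[Corollary 10.3]{hu}, but this is a minor citation discrepancy, not a difference in argument). Your extra care about restricting the codomain to $\mathrm{GL}(\g)$ is a point the paper leaves implicit, but it does not change the substance of the proof.
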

\begin{proof} Since $d(Ad \rho)=d(Ad)\bar\rho$ by \cite[5.4]{hu}, the lemma follows from \cite[Theorem 10.4]{hu}.
\end{proof}

 Define $$\mathcal Z=\{u\in \bar {\mathfrak u}_{\0}|\ gu=u\ \text{for all}\  g\in G\  \text{and}\ [u, \g_1]=0\}.$$
Since $ \bar{\mathfrak u}$ is a $G$-module, we obtain from \cite[7.11(5),I]{jj1} that $ \mathcal Z\subseteq \{u\in\bar{\mathfrak u}_{\bar 0}| [x,u]=0 \ \text{for all}\ x\in \g\}$.
Thus, $\mathcal Z$ is a central subalgebra of  of $\bar{\mathfrak u}$. \par
 Take the maximal torus $T=\{diag(t_1,\cdots t_n)|t_i\in F^*\}$ of $G$. We also use the notation $\e_i$, $1\leq i\leq n$, to denote the natural basis of $X(T)$. Therefore the notation $\e_i\in \mathfrak h^*$ is the differential of $\e_i\in X(T)$ (see \cite[1.2]{jj4}). Recall the root vectors of $\g$ in Section 2. Then the $T$-weights of these root vectors (viewing $X(T)$ as a additive group)  are $$\text{wt} (y_{ij})=\e_i+\e_j,\  \text{wt}(z_{ij})=-(\e_i+\e_j),\  \text{wt}(\tilde e_{ij})=\e_i-\e_j.$$
  Set $\bar{\mathfrak u}^T=\{x\in \bar{\mathfrak u}|tx=x \ \text{for all}\  t\in T\}$.
 \begin{lemma} $\bar{\mathfrak u}^T\subseteq \bar{\mathfrak u}_0$.
 \end{lemma}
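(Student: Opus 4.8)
The plan is to use the $\mathbb{Z}$-grading $\bar{\mathfrak u}=\oplus_{k=-d}^d\bar{\mathfrak u}_k$ together with the fact that $T$ acts on $\bar{\mathfrak u}$ and that the grading is compatible with this $T$-action. First I would observe that each homogeneous component $\bar{\mathfrak u}_k$ is a $T$-submodule, since $\bar{\mathfrak u}_k$ has the monomial basis $y^{\d}x^sz^{\d'}$ with $\sum_j\d'_j-\sum_i\d_i=k$, and $T$ acts on each such monomial by a character. Hence it suffices to show that for $k\neq 0$ the component $\bar{\mathfrak u}_k$ contains no nonzero $T$-fixed vector, i.e.\ $\bar{\mathfrak u}_k^T=0$; then $\bar{\mathfrak u}^T=\oplus_k\bar{\mathfrak u}_k^T=\bar{\mathfrak u}_0^T\subseteq\bar{\mathfrak u}_0$.

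Next I would compute the $T$-weight of a basis monomial $y^{\d}x^sz^{\d'}$. Using the weights recorded just before the statement, $\text{wt}(y_{ij})=\e_i+\e_j$, $\text{wt}(z_{ij})=-(\e_i+\e_j)$, and $\text{wt}(\tilde e_{ij})=\e_i-\e_j$, the weight of the monomial is the sum $\sum_{\d}(\e_i+\e_j)-\sum_{\d'}(\e_i+\e_j)+\sum_s(\e_{a}-\e_{b})$ of the weights of its factors. The key point is to extract the ``total $\e$-degree'': if we apply the linear functional on $X(T)$ sending each $\e_i$ to $1$, then $y_{ij}\mapsto 2$, $z_{ij}\mapsto -2$, and each $\tilde e_{ij}\mapsto 0$. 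Thus this functional sends $\text{wt}(y^\d x^s z^{\d'})$ to $2(\sum_i\d_i)-2(\sum_j\d'_j)=-2k$. Since $p>3$ (in particular $p\neq 2$), $-2k\neq 0$ in $X(T)\otimes_{\bbz}\bbz_{(p)}$-sense whenever $k\neq 0$ — more precisely, the character $\text{wt}(y^\d x^s z^{\d'})$ is nontrivial for $k\neq 0$, so the monomial is moved by some $t\in T$. A $T$-fixed vector, being a linear combination of monomials all of which must carry the trivial character, can therefore only involve monomials with $k=0$, giving $\bar{\mathfrak u}_k^T=0$ for $k\neq 0$.

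I would organize this as: (i) each $\bar{\mathfrak u}_k$ is $T$-stable; (ii) any $T$-fixed element decomposes as a sum of its components in each $\bar{\mathfrak u}_k$, each of which is again $T$-fixed; (iii) within a single $\bar{\mathfrak u}_k$, a $T$-fixed element is supported on monomials of trivial $T$-weight, but every basis monomial in $\bar{\mathfrak u}_k$ has $T$-weight whose pairing with $(1,\dots,1)\in X(T)^*$ equals $-2k\ne 0$ for $k\ne0$ since $p\neq2$; hence $\bar{\mathfrak u}_k^T=0$ for $k\neq0$, and $\bar{\mathfrak u}^T=\bar{\mathfrak u}_0^T\subseteq\bar{\mathfrak u}_0$.

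The only genuinely delicate point is step (iii): one must be careful that the characters $\text{wt}(y^\d x^s z^{\d'})$ for varying monomials can coincide (many monomials in $\bar{\mathfrak u}_k$ share a $T$-weight), so a $T$-fixed vector need not be a scalar multiple of a single monomial — but that is fine, because all we need is that \emph{none} of the monomials appearing in a $T$-fixed vector of $\bar{\mathfrak u}_k$ can have trivial weight when $k\neq0$, which follows from the degree-$(1,\dots,1)$ computation. I expect the proof to be short; the main thing to get right is simply the bookkeeping of $T$-weights and the use of $p>2$ to ensure $2k\not\equiv0$, and the routine fact that in a rational $T$-module the fixed subspace is the sum of the trivial weight spaces, so an element is $T$-fixed iff each of its weight-space components of nontrivial weight vanishes.
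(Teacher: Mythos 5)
Your proof is correct and essentially the same as the paper's: both reduce to computing the $T$-weight of a basis monomial and observe that its total $\e$-coordinate is $-2k$, the paper by summing the $n$ equations obtained from setting each $\e_i$-coefficient to zero, which is exactly the same as applying your $(1,\dots,1)$ functional. One small correction: the appeal to $p>3$ (or $p\neq 2$) is a red herring, since the $T$-weights live in $X(T)\cong\mathbb{Z}^n$ and the functional takes values in $\mathbb{Z}$, where $2k=0$ already forces $k=0$ regardless of the characteristic.
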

 \begin{proof} Recall that $\bar {\mathfrak u}$ has a basis consisting of elements $y^{\d}x^sz^{\d'}$.  Let $u\in  \bar{\mathfrak u}^T$.  Since  each of these basis elements is a $T$-weight vector, it suffices to assume $u$ is such an element. Write  $$u=\Pi_{i<j}y_{ij}^{\d_{ij}}\Pi_{i\neq j}\tilde e^{s_{ij}}_{ij}f(h)\Pi_{i\leq j}z_{ij}^{\d'_{ij}},\  \d_{ij}, \d'_{ij}=0,1,\  0\leq s_{ij}\leq p-1, \ f(h)\in u(\mathfrak h).$$ Note that $\text{wt}(f(h))=0$.\par
 Since $tu=t^{\text{wt}(u)}u=u$ for all $t\in T$, we have $\text{wt}(u)=0$. Also, we have $$\begin{aligned}
  \text{wt}(\Pi_{i\neq j}\tilde{e}_{ij}^{s_{ij}})&=\sum_{i\neq j}s_{ij}(\e_i-\e_j)\\ &=\sum_{i<j}l_{ij}(\e_i-\e_j),\end{aligned}$$ where $l_{ij}=s_{ij}-s_{ji}$.
 It follows that  $$\sum_{i<j}(\d'_{ij}-\d_{ij})(\e_i+\e_j)+\sum_{i=1}^n2\d'_{ii}\e_i+\sum_{i<j}l_{ij}(\e_i-\e_j)=0.$$
 Since $\e_1,\dots,\e_n\in X(T)$ are linearly independent, the coefficient of each $\e_i$ is $0$. Then we have $$\sum_{i<j}(\d'_{ij}-\d_{ij})+\sum_{k<i}(\d'_{ki}-\d_{ki})+2\d'_{ii}+\sum_{i<j}l_{ij}-\sum_{k<i}l_{ki}=0.$$
 The sum of all these $n$ equations gives $$2\sum_{i<j}\d'_{ij}-2\sum_{i<j}\d_{ij}+2\sum_{i=1}^n\d'_{ii}=0,$$
 and hence $\sum_{i\leq j}\d'_{ij}=\sum_{i<j}\d_{ij}$, implying that $u\in \bar{\mathfrak u}_0$.
 \end{proof}
 Immediately from the lemma, we have $\mathcal Z\subseteq \bar{\mathfrak u}_0$.
 \subsection{The Harish-Chandra homomorphism on  $\bar{\mathfrak u}$}

 Let $\Lambda_0=F_p\e_1+\cdots +F_p\e_n$ and  let  $u(\mathfrak h)$ be the restricted enveloping algebra of $\mathfrak h$ (see \cite[Section 5.3]{sf}).
 \begin{lemma} Let $f(h)\in u(\mathfrak h)$. If  $f(h)(\mu)=0$ for all $\mu\in \Lambda_0$, then $f(h)=0$.
 \end{lemma}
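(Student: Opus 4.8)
The plan is to show that the evaluation map $u(\mathfrak h)\to F^{\Lambda_0}$, $f(h)\mapsto (f(h)(\mu))_{\mu\in\Lambda_0}$, is injective by a dimension count, after identifying both sides as spaces of the same (finite) dimension and showing the map is surjective — or, more directly, by exhibiting an explicit basis of $u(\mathfrak h)$ on which the evaluation functionals separate points. Recall that $u(\mathfrak h)$ has dimension $p^n$, with basis the monomials $h_1^{a_1}\cdots h_n^{a_n}$, $0\le a_i\le p-1$, and that $\Lambda_0=F_p\e_1+\cdots+F_p\e_n$ also has exactly $p^n$ elements. So it suffices to prove that the $p^n$ evaluation functionals $\mathrm{ev}_\mu$, $\mu\in\Lambda_0$, are linearly independent on $u(\mathfrak h)$; equivalently, that no nonzero $f(h)$ vanishes at every $\mu\in\Lambda_0$.

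First I would reduce to the one-variable case. Since $\mathfrak h$ is abelian with basis $h_1,\dots,h_n$ and each $h_i$ acts on the weight $\mu=\sum \mu_j\e_j$ by the scalar $\mu_i\in F_p$, an element $f(h)\in u(\mathfrak h)$ is a polynomial $f(h_1,\dots,h_n)$ with each $h_i$-degree at most $p-1$, and $f(h)(\mu)=f(\mu_1,\dots,\mu_n)$ with $\mu_i$ ranging independently over $F_p$. Thus the statement is: a polynomial over $F$ in $n$ variables, of degree $\le p-1$ in each variable, that vanishes on all of $F_p^{\,n}$, is the zero polynomial. This is the standard Combinatorial-Nullstellensatz-type fact. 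I would prove it by induction on $n$: write $f=\sum_{k=0}^{p-1} f_k(h_1,\dots,h_{n-1})\,h_n^k$; fixing a point in $F_p^{\,n-1}$, the resulting one-variable polynomial in $h_n$ has degree $\le p-1$ but $p$ distinct roots (all of $F_p$), hence is identically zero, so every $f_k$ vanishes on $F_p^{\,n-1}$; by induction each $f_k=0$. The base case $n=1$ is just that a nonzero polynomial of degree $\le p-1$ has at most $p-1$ roots. Alternatively, one can invoke that $u(\mathfrak h)\cong F[h_1,\dots,h_n]/(h_1^p-h_1,\dots,h_n^p-h_n)$, which by CRT is isomorphic to $\prod_{\mu\in\Lambda_0}F$ via exactly the evaluation maps, making injectivity immediate.

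I do not expect a serious obstacle here; the only mild subtlety is making the identification of $f(h)\in u(\mathfrak h)$ with an honest polynomial function on $\Lambda_0$ precise — namely that the relations $h_i^p=h_i$ holding in $u(\mathfrak h)$ are compatible with $\mu_i^p=\mu_i$ in $F_p$, so evaluation is well defined — but this is exactly the defining relation of the restricted enveloping algebra of the torus and needs only a sentence. So the write-up would be: (i) recall the basis and dimension of $u(\mathfrak h)$ and note $|\Lambda_0|=p^n$; (ii) identify $f(h)(\mu)$ with polynomial evaluation; (iii) run the one-variable-at-a-time induction to conclude $f=0$.
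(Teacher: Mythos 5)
Your argument is correct and is essentially the same as the paper's: both proceed by induction on $n$, writing $f=\sum_{k=0}^{p-1}f_k(h_1,\dots,h_{n-1})h_n^k$, fixing a point in $\Lambda_0$, invoking the one-variable case (a polynomial of degree $\le p-1$ with $p$ roots in $F_p$ is zero) to kill each coefficient $f_k$ at that point, and then applying the inductive hypothesis. The CRT remark at the end is a nice alternative phrasing, but the induction you actually run matches the paper's proof step for step.
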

\begin{proof} We proceed by induction on $n$. If $n=1$, then $f(h)$ is a polynomial of one variable $h_1$ such that $\text{deg} f\leq p-1$. Since the set $\{h_1(\mu)|\ \mu\in\Lambda_0\}\subseteq F$ contains $p$ elements,  we obtain $f(h)=0$.\par If $n\geq 2$, then we may write $f(h)\in u(\mathfrak h)$ as $$\begin{aligned}
&f(h_1,\dots, h_n)\\ &=f_0(h_1,\dots,h_{n-1})+f_1(h_1,\dots,h_{n-1})h_n+\cdots +f_{p-1}(h_1,\dots,h_{n-1})h_n^{p-1}.\end{aligned}$$ For each fixed
$\mu\in\Lambda_0$, using the fact that $\mu+F_p\e_n\subseteq \Lambda_0$ together with the conclusion for $n=1$, we obtain $$\begin{aligned} f_0(h_1(\mu),\dots,h_{n-1}(\mu)) &=f_1(h_1(\mu),\dots,h_{n-1}(\mu))\\
&=\cdots \\
&=f_{p-1}(h_1(\mu),\dots, h_{n-1}(\mu))\\
&=0.\end{aligned}$$ Then the induction hypotheses yields $f_i(h_1,\dots, h_{p-1})=0$ for all $i$, and hence $f(h)=0$.
\end{proof}
Denote by $\mathfrak n^+$(resp. $\mathfrak n^-$) the Lie subalgebra of $\g$ spanned by positive (resp. negative) roots.  We fix an order $$f_{\a_1},\dots, f_{\a_l}, h_1,\dots, h_n, e_{\b_1}\dots, e_{\b_m},$$ where $\a_1,\dots, \a_l$ (resp. $\b_1,\dots \b_m$) are all the positive (resp. negative) roots of $\g$. From Section 3.2,  $\bar {\mathfrak u}$ has a basis of the form  $$f_{\a_1}^{d_1}\cdots f_{\a_l}^{d_l}h_1^{s_1}\cdots h_n^{s_n}e_{\b_1}^{d_1'}\cdots e_{\b_m}^{d_m'},\ 0\leq s_k\leq p-1, \sum_{\beta_i\ \text{is odd}}d_i'\leq d,$$ where  $d_i=0,1$ (resp. $d_i'=0,1$) if   $\a_i$ (resp. $\beta_i$) is odd and $0\leq d_i\leq p-1$ (resp. $0\leq d_i'\leq p-1$) if   $\a_i$ (resp. $\beta_i$) is even.  Then $\bar{\mathfrak u}^T$ is spanned by these vectors with $$\sum_{i=1}^l d_i\a_i=\sum_{j=1}^m d'_j\beta_j.$$ Denote by $\mathfrak n^-\bar{\mathfrak u}$ (resp. $\bar{\mathfrak u}\mathfrak n^+$) the set spanned by these vectors with $\sum _id_i>0$ (resp. $\sum_i d_i'>0$).\par
The following conclusion is analogous to  \cite[Lemma 7.4.2]{jd},
 \begin{lemma}  Set $\mathfrak l=:\bar{\mathfrak u}\mathfrak n^+\cap \bar {\mathfrak u}^T$. \par
 (1) $\mathfrak l=\mathfrak n^- \bar{\mathfrak u}\cap \bar{\mathfrak u}^T$, so that $\mathfrak l\triangleleft  \bar{\mathfrak u}^T$;\par
 (2) $ \bar{\mathfrak u}^T= u(\mathfrak h)\oplus \mathfrak l$.
 \end{lemma}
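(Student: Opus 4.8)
The plan is to mimic the classical decomposition of the $T$-fixed subalgebra in the reduced enveloping algebra setting, adapting \cite[Lemma 7.4.2]{jd} to the $\mathbb Z_2$-graded, $I$-quotiented situation. Throughout I will use the PBW-type basis of $\bar{\mathfrak u}$ fixed just before the statement, namely the monomials $f_{\a_1}^{d_1}\cdots f_{\a_l}^{d_l}h_1^{s_1}\cdots h_n^{s_n}e_{\b_1}^{d_1'}\cdots e_{\b_m}^{d_m'}$, and the description of $\bar{\mathfrak u}^T$ as the span of those monomials satisfying $\sum_i d_i\a_i=\sum_j d_j'\b_j$.

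First I would prove part (1). The inclusion-free observation is that a basis monomial lies in $\mathfrak l=\bar{\mathfrak u}\mathfrak n^+\cap\bar{\mathfrak u}^T$ precisely when it is $T$-fixed and has $\sum_j d_j'>0$; but the weight condition $\sum_i d_i\a_i=\sum_j d_j'\b_j$ forces $\sum_i d_i\a_i$ to be a nonnegative combination of negative roots, and since the positive and negative root systems here are separated by the $\mathbb Z$-grading together with the standard partial order coming from $\mathfrak b_0$, $\sum_j d_j'\b_j\neq 0$ is equivalent to $\sum_i d_i\a_i\neq 0$, i.e.\ to $\sum_i d_i>0$. Hence the $T$-fixed basis monomials with $\sum_j d_j'>0$ are exactly those with $\sum_i d_i>0$, which gives $\mathfrak l=\mathfrak n^-\bar{\mathfrak u}\cap\bar{\mathfrak u}^T$. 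That $\mathfrak l$ is a two-sided ideal of $\bar{\mathfrak u}^T$ then follows formally: $\bar{\mathfrak u}\mathfrak n^+$ is a right ideal of $\bar{\mathfrak u}$ and $\mathfrak n^-\bar{\mathfrak u}$ a left ideal, so $\mathfrak l$ is simultaneously a left and right ideal inside $\bar{\mathfrak u}^T$. One subtle point to be careful about: the grading argument must be phrased so that it is insensitive to the truncation defining $I$ (the constraint $\sum_{\b_i\ \mathrm{odd}}d_i'\le d$) — but since that constraint only deletes some basis monomials and never creates new relations among the survivors, the weight bookkeeping is unaffected.

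Next, part (2). The containment $u(\mathfrak h)+\mathfrak l\subseteq\bar{\mathfrak u}^T$ is clear since $u(\mathfrak h)$ is pointwise $T$-fixed. Conversely, take $u\in\bar{\mathfrak u}^T$ and expand it in the PBW basis; collect the terms with $(d_i)=(d_i')=0$ into an element $f(h)\in u(\mathfrak h)$ and observe that every remaining basis monomial has either $\sum_i d_i>0$ or $\sum_j d_j'>0$, hence by part (1) (and its left/right symmetry) lies in $\mathfrak l$; this shows $\bar{\mathfrak u}^T=u(\mathfrak h)+\mathfrak l$. For the directness of the sum I would argue that $u(\mathfrak h)\cap\mathfrak l=0$: an element of $u(\mathfrak h)$ is a linear combination of the monomials $h_1^{s_1}\cdots h_n^{s_n}$, which are $T$-fixed but have $\sum_i d_i=\sum_j d_j'=0$, whereas every basis monomial occurring in $\mathfrak l$ has $\sum_i d_i>0$; since these are distinct basis vectors, the intersection is $0$. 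Alternatively, and perhaps more conceptually, I can invoke Lemma 3.7: if $f(h)\in u(\mathfrak h)\cap\mathfrak l$ then $f(h)$ acts as $0$ on every baby Verma module $Z(\mu)$ with $\mu\in\Lambda_0$ — because $\mathfrak l\subseteq\bar{\mathfrak u}\mathfrak n^+$ kills the highest-weight line while $u(\mathfrak h)$ acts on it by the scalar $f(h)(\mu)$ — so $f(h)(\mu)=0$ for all $\mu\in\Lambda_0$, and Lemma 3.7 gives $f(h)=0$.

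I expect the main obstacle to be the first half of part (1): pinning down precisely why, for this particular $\g=p(n)$ with its mixed even/odd roots and the extra $\mathfrak g_1$ part, the weight equation $\sum_i d_i\a_i=\sum_j d_j'\b_j$ cannot hold nontrivially with $\sum_i d_i=0<\sum_j d_j'$ (or vice versa). This is where one must genuinely use that the positive roots of $\g$ lie in the cone spanned by the simple roots and that the $\mathbb Z$-grading of $\bar{\mathfrak u}$ (together with the sign of the $\e_i$-coefficients, as exploited in the proof of Lemma 3.6) separates $\mathfrak n^+$ from $\mathfrak n^-$; the bookkeeping is elementary but needs to be set up cleanly so it is robust under the $I$-truncation. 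Everything after that — the ideal property, the spanning, and the directness — is routine once the basis description is in hand.
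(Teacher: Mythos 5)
Your proposal is essentially correct and, for the crux of the matter, coincides with the paper's own argument: the paper proves only (2), declaring the spanning $\bar{\mathfrak u}^T = u(\mathfrak h) + \mathfrak l$ ``clear,'' and establishes directness exactly as in your second alternative, by applying an element of $u(\mathfrak h)\cap\mathfrak l$ to the maximal vector of $Z(\mu)$ for every $\mu\in\Lambda_0$ and invoking the vanishing lemma for $u(\mathfrak h)$. Your first alternative (the basis-monomial disjointness argument) is also valid and is in fact more elementary, since $u(\mathfrak h)$ and $\mathfrak l$ are spans of disjoint subsets of a common PBW basis once the cone property from part (1) is in hand; the paper chooses the Verma-module route instead.

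Two corrections. First, a citation slip: the lemma you want is Lemma~3.5 (vanishing of $f(h)\in u(\mathfrak h)$ on all of $\Lambda_0$ implies $f(h)=0$), not Lemma~3.7. Second, and more substantively, your closing claim that ``the positive roots of $\g$ lie in the cone spanned by the simple roots'' is false for $p(n)$: the odd positive roots $\e_i+\e_j$ have $\sum$-of-coordinates equal to $2$, so they cannot be written as nonnegative $\bbr$-linear combinations of the $\e_i-\e_{i+1}$, which all have coordinate sum $0$. The cone argument for part (1) nonetheless goes through, but you should instead observe that all positive roots of $\g$ lie strictly on one side of a hyperplane (for instance, take $\phi=\sum_i(n-i+1)\e_i^*$, which is strictly positive on every $\e_i-\e_j$ with $i<j$ and on every $\e_i+\e_j$), or equivalently first use the total coordinate sum to kill the odd contributions and then the $\g_{\bar 0}$-cone to kill the even ones. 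With that fix, $\sum_i d_i\a_i=\sum_j d_j'\b_j$ with all coefficients nonnegative forces both sides to vanish, which is exactly what part (1) needs, and this is in fact the same ingredient the paper tacitly relies on in declaring the spanning and part (1) ``clear.''
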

 \begin{proof} It suffices to prove (2).\par It is clear that $\bar{\mathfrak u}^T=u(\mathfrak h)+ \mathfrak l$. Let $x\in u(\mathfrak h)\cap \mathfrak l$. Then we have $ x= u_0(h)$ for some  $u_0(h)\in u(\mathfrak h)$ and  $x\in\bar{\mathfrak u}\mathfrak n^+$. Applying $x$ to the maximal vector $v_{\mu}\in Z(\mu)$ for all $\mu\in \Lambda_0$.   Then  we get $u_0(h) v_{\mu}=0$, and hence $u_0(h)(\mu)=0$ for all $\mu\in \Lambda_0$. It follows that $x=u_0(h)=0$,  which completes the proof.
 \end{proof}
   Define the Harish-Chandra homomorphism  $h: \bar{\mathfrak u}^T\longrightarrow u(\mathfrak h)$ to be the projection with kernel $\mathfrak l$. Recall that $\bar{\mathfrak u}_0=u(\g_{\0})\oplus \mathfrak a$. It is easy to see that $\bar{\mathfrak u}^T=\bar{\mathfrak u}_0^T=u(\g_{\0})^T\oplus {\mathfrak a}^T$. Then the restriction of $h$ to $u(\g_{\0})^T$ and $u(\g'_{\0})^T$ define analogous homomorphisms on $u(\g_{\0})$ and $u(\g'_{\0})$.\par

Let $$\Phi_0=\{\e_i-\e_j| 1\leq i\neq  j\leq n\}\ (\text{resp.} \
\Phi^+_0=\{\e_i-\e_j| 1\leq i< j\leq n\}) $$ denote the root system (resp. positive roots) of $\g_{\0}$.
We denote the Weyl group of $G$ with respect to  $T$ by $W$. This group is generated by $s_{\a}, \a\in \Phi^+_0\subseteq X(T)$. The action of each $s_{\a}$ on $\mathfrak h^*$ is given by $s_{\a}(\l)=\l-\l(h_{\a})\a$. Since $p>2$, each $s_{\a}$ has order $2$. Since the elements  $$h_{\e_1-\e_2}=h_1-h_2, \dots, h_{\e_{n-1}-\e_n}=h_{n-1}-h_n$$ are linearly independent, we can find $\rho \in\mathfrak h^*$ such that $$\rho (h_{\e_i-\e_{i+1}})=1\  \text{for}\  1\leq i\leq n-1.$$ The dot action on $\mathfrak h^*$ of any $w\in W$ is defined by $w\cdot \l=w(\l+\rho)-\rho$. In particular, we have $s_{\a}\cdot \l=s_{\a}\l-\a$ for all simple $\a$. Since the $s_{\a}$ with $\a$ simple generate $W$, the dot action is independent of the choice of $\rho$ (see \cite[9.2]{jj}).\par

 Since the bilinear form $\theta_{|\mathfrak h}$ is $W$-invariant and since $W$ permutes $\Phi_0$ (\cite[Lemma 10.4 C]{hu1}, we obtain that $W$ permutes $\{h_{\a}| \a\in \Phi_0\}$.\par

We may identify $U(\mathfrak h)$ with the symmetric algebra $S(\mathfrak h)$, and hence with the algebra of  polynomial functions on $\mathfrak h^*$. Thus, the dot action on $\mathfrak h^*$ yields also a dot action on $U(\mathfrak h)$ defined by $(w\cdot f) (\l)=f(w^{-1}\cdot \l)$ for $f\in U(\mathfrak h)$. In particular, we have $s_{\a}\cdot h=s_{\a}(h)-\a (h) 1$ if $\a$ is a simple root (\cite[9.2]{jj}).\par
It is easy to see that $W$ stabilizes $\Lambda_0\subseteq \mathfrak h^*$. Then  $W$ also acts on $\Lambda_0$ by the dot action.  We have  $s_{\a}(h)=h-\a(h)h_{\a}$ for all simple roots $\a$, implying that $$s_{\a}(h_i)^p-s_{\a}(h_i)\in \mathcal O\cap U(\mathfrak h)$$ for all $i$. Then $W$ acts also on the restricted enveloping algebra $u(\mathfrak h)$.  Since $$(s_{\a}\cdot h_i)^p-s_{\a}\cdot h_i=s_{\a}(h_i)^p-s_{\a}(h_i)\in \mathcal O$$ for all $i$, $W$ also acts  on $u(\mathfrak h)$ by the dot action. \par

Following \cite{sv}, define an element $\Theta\in U(\mathfrak h)$ by  $$\begin{aligned} \Theta (\mu)& =\Pi_{\a\in\Phi_0} ((\a, \mu+\rho)-1)\\
&=\Pi_{\i\neq j} ((h_i-h_j)(\mu +\rho)-1)\  \text{for}\ \mu\in\mathfrak h^*.\end{aligned}$$
 Since $W$ permute $\Phi_0$, we have  $w\cdot \Theta =\Theta$ for all $w\in W$,  and hence $\Theta\in U(\mathfrak h)^{W\cdot}$. \par
  Note that $$\text{deg}\Theta=|\Phi_0|=2|\Phi_0^+|=2d.$$ \par
Assume $p>2d$ in the following. Denote also by $\Theta$ its image in $u(\mathfrak h)$. Then we have $\Theta\in u(\mathfrak h)^{W\cdot}$. \par
Let $\a\in\Phi^+_0$ and let $x_{\a}$ be a root vector such that $[x_{\a}, x_{-\a}]=h_{\a}$. We may simply let $x_{\a}={\tilde e}_{ij}$ and $x_{-\a}={\tilde e}_{ji}$ for $\a=\e_i-\e_j\in \Phi^+_0$.  Note that $ad^3 x_{\a}(\g)=0$. With the assumption $p>3$, we may define an automorphism of $\g$ by letting $$\tilde s_{\a}=(\text{exp}\ ad x_{\a})(\text{exp}\ ad x_{-\a})(\text{exp} \ ad x_{\a}).$$
By \cite[1.10.19]{jd}, we have ${\tilde s_{\a}}|_{\mathfrak h}=s_{\a}$. Thus, each element of $W$ may be viewed as an automorphism of $\g$. By essentially a similar argument as that for \cite[2.3 (*)]{hu1}, we obtain $g\in G$ such that $$\begin{aligned} \tilde s_{\a}(x^{[p]})&= \rho(g) x^{[p]}\rho(g)^{-1}\\ &
=(\rho(g)x\rho(g)^{-1})^{[p]}\\ &=(\tilde s_{\a}x)^{[p]}\ \text{for all}\  x\in \g_{\0}.\end{aligned}$$ Then $\tilde s_{\a}$ can be extended to an automorphism of $u(\g)$. It is easy to see that $\tilde s_{\a}(\g_{\pm 1})\subseteq \g_{\pm 1}$. Then each $w\in W$ can also be extended to an automorphism of $\bar{\mathfrak u}$.

\begin{lemma} $$h(\mathcal Z)\subseteq u(\mathfrak h)^{W\cdot}.$$
\end{lemma}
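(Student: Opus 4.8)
The plan is to deduce the $W\cdot$-invariance of $h(z)$, for $z\in\mathcal Z$, from an identity of evaluations on $\Lambda_0$ (to which Lemma 3.5 applies), and to establish that identity by comparing the action of the central element $z$ on two baby Verma modules whose highest weights differ by a simple reflection. First I would record how $z\in\mathcal Z$ acts on a baby Verma module. Since $\mathcal Z\subseteq\bar{\mathfrak u}_0\subseteq\bar{\mathfrak u}^T$, Lemma 3.6(2) gives $z=h(z)+n$ with $n\in\mathfrak l\subseteq\bar{\mathfrak u}\mathfrak n^+$. For $\mu\in\Lambda_0$ the module $Z(\mu)$ is a $\bar{\mathfrak u}$-module by Lemma 3.2(2), cyclic on the maximal vector $v_\mu$, which is annihilated by $\mathfrak n^+=\mathfrak n_0^++\g_1$. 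Hence $zv_\mu=h(z)v_\mu=h(z)(\mu)v_\mu$, and, $z$ being central in $\bar{\mathfrak u}$, it acts on all of $Z(\mu)$ by the scalar $h(z)(\mu)$; likewise $z$ acts on $Z(s_\a\cdot\mu)$ by $h(z)(s_\a\cdot\mu)$, which is legitimate because $W$ stabilizes $\Lambda_0$ for the dot action.

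Since $W$ is generated by the simple reflections $s_\a$, $\a=\e_i-\e_{i+1}$, and the dot action on $u(\h)$ is a group action, it suffices to show $h(z)(\mu)=h(z)(s_\a\cdot\mu)$ for every such $\a$ and every $\mu\in\Lambda_0$; Lemma 3.5 then gives $s_\a\cdot h(z)=h(z)$ in $u(\h)$, and letting $\a$ vary yields $h(z)\in u(\h)^{W\cdot}$. Fix $\a$, with $\mathfrak{sl}_2$-triple $e_\a=\tilde e_{i,i+1}$, $f_\a=\tilde e_{i+1,i}$, $h_\a=h_i-h_{i+1}$, fix $\mu\in\Lambda_0$, and set $m=\mu(h_\a)+1\in F$. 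If $m=0$ then $s_\a\cdot\mu=\mu$ and there is nothing to prove, so we may assume $\mu(h_\a)$ is represented by an integer in $\{0,\dots,p-2\}$, i.e.\ $m\in\{1,\dots,p-1\}$.

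The key object is $w:=f_\a^{\,m}v_\mu\in Z(\mu)$. It is nonzero, since $f_\a^{0}v_\mu,\dots,f_\a^{p-1}v_\mu$ are linearly independent by PBW; it has $\h$-weight $\mu-m\a=s_\a\cdot\mu$; and it is a maximal vector. Indeed, the $\mathfrak{sl}_2$-relation gives $e_\a w=m(\mu(h_\a)-m+1)f_\a^{\,m-1}v_\mu=0$; for a positive root $\b\neq\a$ one has $[e_\b,f_\a]\in\mathfrak n_0^+$ (since $\a$ is simple, the weight $\b-\a$ is neither $0$ nor a negative root, so the bracket is $0$ or a positive root vector); and $[\g_1,f_\a]\subseteq\g_1$ because $\g_1$ is a $\g_{\0}$-submodule of $\g$. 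Hence, commuting $e_\b$, resp.\ any element of $\g_1$, past the $f_\a$'s in $w$ yields only terms $f_\a^{\,j}\,x\,v_\mu$ with $x\in\mathfrak n_0^+$, resp.\ $x\in\g_1$, all of which vanish. Since $s_\a\cdot\mu\in\Lambda_0$ and $Z(\mu)$ has $p$-character $0$, the universal property of the baby Verma module $Z(s_\a\cdot\mu)$ produces a nonzero $\bar{\mathfrak u}$-homomorphism $Z(s_\a\cdot\mu)\to Z(\mu)$ with image $\bar{\mathfrak u}w\neq0$. By the first paragraph, $z$ acts on $Z(s_\a\cdot\mu)$, and hence on $\bar{\mathfrak u}w\subseteq Z(\mu)$, by the scalar $h(z)(s_\a\cdot\mu)$, while $z$ acts on $Z(\mu)$ by $h(z)(\mu)$; therefore $h(z)(\mu)=h(z)(s_\a\cdot\mu)$, which gives the claim.

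I expect the one genuinely substantive step — the rest being bookkeeping — to be the verification that $w=f_\a^{\,m}v_\mu$ is annihilated by all of $\mathfrak n^+$, and in particular by $\g_1$: this is exactly where the grading identity $[\g_{\0},\g_1]\subseteq\g_1$ is used, so that the commutation argument terminates. A minor additional point, forced by $\Lambda_0$ being a finite set (so there is no Zariski-density shortcut feeding into Lemma 3.5), is the separate, trivial treatment of the weights $\mu$ with $\mu(h_\a)\equiv-1\pmod p$.
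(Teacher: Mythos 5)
Your proof is correct, and it takes a genuinely different route from the paper's. The paper uses Jantzen's argument: it exploits the $G$-invariance of $\mathcal Z$ (so $u=\tilde s_{\a}(u)$), extends the simple reflection to an automorphism $\tilde s_{\a}$ of $\bar{\mathfrak u}$, observes that $x_{-\a}^{p-1}\otimes v_{\mu}$ is a maximal vector for the \emph{twisted} triangular decomposition $\tilde s_{\a}(\g_{-1}+\mathfrak n^-)\oplus\mathfrak h\oplus\tilde s_{\a}(\mathfrak n^++\g_1)$, and compares the two ways the central element acts on it. You instead construct the classical (Shapovalov/BGG-type) homomorphism $Z(s_{\a}\cdot\mu)\to Z(\mu)$ directly inside the \emph{original} triangular decomposition, via the maximal vector $f_{\a}^{\,m}v_{\mu}$ with $m=\mu(h_{\a})+1$, and deduce the scalar identity from centrality alone. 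Your route is arguably more elementary: it never invokes the $G$-invariance of $\mathcal Z$ or the automorphisms $\tilde s_{\a}$ (so in fact it proves $W\cdot$-invariance of $h(z)$ for every central $z\in\bar{\mathfrak u}^T$, not just $z\in\mathcal Z$), at the price of a variable exponent, the small case split at $m=0$, and the hands-on commutation checks — the one genuinely superalgebra-specific point being, as you note, that $[\g_1,f_{\a}]\subseteq\g_1$ makes $\g_1 f_{\a}^{\,m}v_{\mu}=0$ go through by induction. Both methods reduce to the same evaluation identity $h(z)(\mu)=h(z)(s_{\a}\cdot\mu)$ on $\Lambda_0$ and then invoke Lemma 3.5. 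One small slip: the chain $\mathcal Z\subseteq\bar{\mathfrak u}_0\subseteq\bar{\mathfrak u}^T$ is wrong in its second step (Lemma 3.4 gives $\bar{\mathfrak u}^T\subseteq\bar{\mathfrak u}_0$, not the reverse, and e.g.\ $y_{12}z_{13}\in\bar{\mathfrak u}_0\setminus\bar{\mathfrak u}^T$); what you actually need, $\mathcal Z\subseteq\bar{\mathfrak u}^T$, follows directly from $\mathcal Z\subseteq\bar{\mathfrak u}^G$ and $T\subseteq G$, so the argument is unharmed.
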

\begin{proof}  Let $\mu\in \Lambda_0\subseteq \mathfrak h^*$, and let $v_{\mu}$ be the maximal vector in $Z(\mu)$. The Lie superalgebra $\g$ has a triangular decomposition  $$\g=(\g_{-1}+\mathfrak n^-)+\mathfrak h +(\mathfrak n^+ +\g_1).$$  Let $x_{\a}$ be a root vector for $\a\in\Phi^+_0$.  Note that  $x_{-\a}^{p-1}\otimes v_{\mu}\in Z(\mu)$ is a maximal vector for the new triangular decomposition $$\begin{aligned}\g=& \tilde s_{\a}(\g_{-1}+\mathfrak n^-)+\mathfrak h +\tilde s_{\a}(\mathfrak n^+ +\g_1)\\  =& (\g_{-1}+\tilde s_{\a}(\mathfrak n^-))+ \mathfrak h +(\tilde s_{\a}(\mathfrak n^+)+\g_1).\end{aligned}$$  Therefore, $Z(\mu)$ is isomorphic to the baby Verma module $Z(\mu-(p-1)\a)$ with respect to the new triangular decomposition.\par

Let $u\in \mathcal Z$. By Lemma 3.6, we may write   $u=u_1+h(u)$ such that $u_1\in\bar{\mathfrak u}\mathfrak n^+$.  Since $\mathcal Z\subseteq \bar{\mathfrak u}^G,$ using the discussion preceding the lemma  we have $$u=\tilde s_{\a}(u)= \tilde s_{\a}(u_1) +s_{\a}(h(u)).$$ Since $\tilde s_{\a}(u_1)\in \bar{\mathfrak u}\tilde s_{\a}(\mathfrak n^+)$, the righthand side is a linear combination of a  basis of $\bar{\mathfrak u}$ using the new triangular decomposition.\par
We now use Jantzen's argument (see \cite[9.5]{jj}). Applying $u$ to $x_{-\a}^{p-1}\otimes v_{\mu}$, we have
$$u(x_{-\a}^{p-1}\otimes v_{\mu})=h(u)(\mu) x_{-\a}^{p-1}\otimes v_{\mu}$$ and $$u(x_{-\a}^{p-1}\otimes v_{\mu})=\tilde s_{\a}(u)(x_{-\a}^{p-1}\otimes v_{\mu})$$$$=s_{\a}(h(u))(\mu +\a)(x_{-\a}^{p-1}\otimes v_{\mu})=
h(u)(s_{\a}\cdot \mu)(x_{-\a}^{p-1}\otimes v_{\mu}),$$ implying that $s_{\a}\cdot h(u)=h(u)$, and hence  $h(u)\in u(\mathfrak h)^{W\cdot}$, since $\a$ is arbitrary. This completes the proof.
\end{proof}

\begin{lemma} Let $\mu\in\Lambda_0$ be such that $\mu_{n-1}=\mu_n$ and let $h(z)\in h(\mathcal Z)$. Then $$h(z)(\mu+t(\e_{n-1}+\e_n))=h(z)(\mu)$$ for any $t\in  F_p$.
\end{lemma}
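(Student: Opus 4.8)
The plan is to combine the $W\cdot$-invariance from Lemma 3.8 with the crucial divisibility of $\Theta$ into $h(z)$. The key algebraic fact to establish first is that $h(\mathcal Z)$ lies inside the subring $F[\Theta]$-multiples, or more precisely that every $h(z)$ with $z\in\mathcal Z$ is divisible (in a suitable polynomial sense, before passing to $u(\mathfrak h)$) by the factor $\bigl((h_{n-1}-h_n)(\mu+\rho)-1\bigr)\bigl((h_{n-1}-h_n)(\mu+\rho)+1\bigr)$ coming from the pair of roots $\pm(\e_{n-1}-\e_n)$ inside $\Theta$. Indeed, since $\mathcal Z$ centralizes $\g_1$ and is $G$-invariant, applying $z$ to the Kac module $K(M)$ built from a simple $u(\g_{\0})$-module of highest weight $\mu$, and using Proposition 3.3 together with the vanishing of $\delta_\mu$ exactly when some $\mu_i-\mu_j+j-i-1=0$, one sees that $h(z)$ must vanish at every weight $\mu$ for which $\delta_\mu=0$; in particular at weights with $\mu_{n-1}-\mu_n+1-1=\mu_{n-1}-\mu_n=0$, i.e.\ $\mu_{n-1}=\mu_n$. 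Thus I would first record: if $\mu\in\Lambda_0$ satisfies $\mu_{n-1}=\mu_n$, then $h(z)(\mu)=0$ for all $z\in\mathcal Z$.

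Granting that, here is the main line. Fix $\mu\in\Lambda_0$ with $\mu_{n-1}=\mu_n$ and fix $t\in F_p$. Consider the reflection $s:=s_{\e_{n-1}-\e_n}\in W$. By Lemma 3.8, $h(z)\in u(\mathfrak h)^{W\cdot}$, so $h(z)(s\cdot\nu)=h(z)(\nu)$ for every $\nu\in\Lambda_0$. I would apply this with $\nu=\mu+t(\e_{n-1}+\e_n)$. The point is to compute $s\cdot\nu$ and show it can be connected back to $\mu+t(\e_{n-1}+\e_n)$ itself, or to weights where $h(z)$ is already known to vanish, by a short chain of such reflections combined with the vanishing locus $\{\delta_\mu=0\}$. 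Concretely, write $\nu'=\nu+\rho$; then $s(\nu')=\nu'-\langle \nu',\e_{n-1}-\e_n\rangle(\e_{n-1}-\e_n)$, and $\langle\nu',\e_{n-1}-\e_n\rangle=(\mu_{n-1}-\mu_n)+(\rho_{n-1}-\rho_n)=0+1=1$ since $\mu_{n-1}=\mu_n$ and $\rho(h_{\e_{n-1}-\e_n})=1$. Hence $s(\nu')=\nu'-(\e_{n-1}-\e_n)$, i.e.\ $s\cdot\nu=\nu-(\e_{n-1}-\e_n)$. Now $h(z)(\nu)=h(z)(s\cdot\nu)=h(z)(\nu-\e_{n-1}+\e_n)$; the new weight has its $(n-1,n)$-coordinates shifted by $(-1,+1)$, so iterating $s\cdot$ repeatedly one moves along the line $\mu+\{(a,-a)\}$ fixing the value of $h(z)$. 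Separately, the coordinates $\mu_{n-1}$ and $\mu_n$ of $\mu$ are equal, so the weights $\mu$ and $\mu+t(\e_{n-1}+\e_n)$ differ only by a multiple of $\e_{n-1}+\e_n$, and I must link this to the $\e_{n-1}-\e_n$ direction. The bridge is the first step: at $\mu$ itself (where $\mu_{n-1}=\mu_n$) we have $h(z)(\mu)=0$, and I want $h(z)(\mu+t(\e_{n-1}+\e_n))=0$ too.

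For that last link I would argue that $\mu+t(\e_{n-1}+\e_n)$ also has equal last-two coordinates shifted by the same $t$, hence $(\mu+t(\e_{n-1}+\e_n))_{n-1}=(\mu+t(\e_{n-1}+\e_n))_n$, so $\delta$ at this weight again contains the factor indexed by $(n-1,n)$ and vanishes; therefore by the first recorded fact $h(z)(\mu+t(\e_{n-1}+\e_n))=0=h(z)(\mu)$, which is exactly the claimed equality. So in the end the statement reduces cleanly to observing that $\mu_{n-1}=\mu_n$ forces $(\mu+t(\e_{n-1}+\e_n))_{n-1}=(\mu+t(\e_{n-1}+\e_n))_n$ and then invoking the vanishing of $h(z)$ on $\{\delta=0\}$ twice. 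The main obstacle I anticipate is making the first step fully rigorous: one must be careful that the implication "$\delta_\mu=0\Rightarrow h(z)(\mu)=0$" really follows from Proposition 3.3 and the structure of $\mathcal Z$ — i.e.\ that a central element acting on the (possibly non-simple) Kac module $K(M)$ acts by the scalar $h(z)(\mu)$ and that this forces vanishing when $K(M)$ fails to be simple — and this may require tracing through how $\mathcal Z$ acts on the maximal vector and on $ZY\otimes v_\mu$ as in the proof of Proposition 3.3. If instead the intended route is purely via the $W\cdot$-invariance plus an explicit divisibility $h(z)\in\Theta\cdot u(\mathfrak h)+\cdots$, the obstacle shifts to proving that divisibility; either way the degree bound $p>2d$ and the explicit form of $\rho$ enter to guarantee $\Theta$ has the expected factorization and nonzero image in $u(\mathfrak h)$.
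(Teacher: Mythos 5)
Your argument rests on a ``recorded fact'' that is false: you claim that $\delta_\mu=0$ (in particular $\mu_{n-1}=\mu_n$) forces $h(z)(\mu)=0$ for every $z\in\mathcal Z$. But $1\in\mathcal Z$ and $h(1)=1$ is the constant function $1$, which vanishes nowhere. More to the point, the structure of $h(\mathcal Z)$ that is eventually established (Theorem 3.14 and Corollary 3.26) is $h(\mathcal Z)=F+\Theta\, u(\mathfrak h)^{W\cdot}$, so $h(z)$ generically has a nonzero constant term on the atypical locus; what the paper proves (Lemma 3.10, with Lemma 3.8 and Lemma 3.9 as inputs) is that $h(z)$ is \emph{constant} there, not zero. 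Since Lemma 3.10 is proved downstream of Lemma 3.8, invoking a vanishing statement of this flavor to prove Lemma 3.8 would also be circular. You correctly notice that the $W$-reflection $s_{\e_{n-1}-\e_n}$ only moves you along the $\e_{n-1}-\e_n$ line and cannot by itself reach $\mu+t(\e_{n-1}+\e_n)$; the ``bridge'' you propose to cross that gap is exactly the false claim, so the argument collapses at that point.

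The paper's proof is of a genuinely different kind: it works inside the baby Verma module rather than with Kac modules and $\delta_\mu$. Because $\mu_{n-1}=\mu_n$, the vector $\tilde e_{n,n-1}v_\mu\in Z(\mu)$ is maximal of weight $\mu-(\e_{n-1}-\e_n)$, giving a morphism $\varphi:Z(\mu-(\e_{n-1}-\e_n))\to Z(\mu)$ with proper image. In the quotient $Z(\mu)/\operatorname{Im}\varphi$ the image of $y_{n-1,n}v_\mu$ is a nonzero maximal vector of weight $\mu-(\e_{n-1}+\e_n)$. Any $z\in\mathcal Z$ acts on all of $Z(\mu)$, hence on the quotient, by the scalar $h(z)(\mu)$; evaluating $z$ on the new maximal vector also gives the scalar $h(z)(\mu-(\e_{n-1}+\e_n))$. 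Equating the two and iterating yields $h(z)(\mu)=h(z)(\mu+t(\e_{n-1}+\e_n))$ for all $t\in F_p$, with no appeal to $\delta_\mu$ or to vanishing of $h(z)$.
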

\begin{proof}
Let $v_{\mu}\in Z(\mu)$ be the maximal vector of weight $\mu$. Recall the root vector $\tilde e_{n.n-1}=e_{n,n-1}-e_{2n-1,2n}\in\g_{\0}$.  Since $\mu_{n-1}=\mu_n$,
 $\tilde e_{n,n-1}v_{\mu}$ is a maximal vector of weight $\mu-(\e_{n-1}-\e_n)$. Then we obtain a homomorphism of $\bar{\mathfrak u}$-modules $\varphi: Z(\mu-(\e_{n-1}-\e_n))\longrightarrow Z(\mu)$. Since $v_{\mu}\notin \text{Im} \varphi$,  we have $\text{Im}\varphi\subsetneq Z(\mu)$.\par  Recall the root vector for $-\e_n-\e_{n-1}$ is  $y_{n-1,n}=e_{2n-1,n}-e_{2n,n-1}\in \g_{-1}$. A straightforward computation shows that, the image of the vector $y_{n-1,n}v_{\mu}$ in $Z(\mu)/\text{Im}\varphi$ is a maximal vector of weight $\mu-(\e_{n-1}+\e_n)$. This implies that $$h(z)(\mu)=h(z)(\mu-(\e_{n-1}+\e_n))$$ and hence $h(z)(\mu)=h(z)(\mu+t(\e_{n-1}+\e_n))$ for all $t\in  F_p$.
\end{proof}

\begin{lemma} Let $\mu\in\Lambda_0$ be such that $(\mu+\rho, \e_i-\e_j)=1$ for $i\neq j$.  Let $h(z)\in h(\mathcal Z)$. Then $h(z)(\mu+t(\e_i+\e_j))=h(z)(\mu)$ for any $t\in  F_p$.
\end{lemma}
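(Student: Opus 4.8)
The plan is to deduce the statement from Lemma~3.8, of which it is the ``arbitrary $i\neq j$'' version (the case $i=n-1$, $j=n$ is exactly Lemma~3.8, since $(\mu+\rho,\e_{n-1}-\e_n)=1$ is equivalent to $\mu_{n-1}=\mu_n$). The idea is to conjugate by a suitable element of the Weyl group $W$ and exploit the $W\cdot$-invariance of $h(z)$ recorded in Lemma~3.7. Recall that $h(z)\in h(\mathcal Z)\subseteq u(\mathfrak h)^{W\cdot}$, so by the definition of the dot action on $u(\mathfrak h)$ we have $h(z)(w\cdot\l)=h(z)(\l)=h(z)(w^{-1}\cdot\l)$ for all $w\in W$ and $\l\in\mathfrak h^*$. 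I also use the standard facts that $W\cong S_n$ acts on the labels $\{1,\dots,n\}$ with $w(h_k)=h_{w(k)}$ and $w(\e_k)=\e_{w(k)}$, and that $W$ stabilises $\Lambda_0$ under the dot action.

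First, since $i\neq j$, I choose $w\in W$ with $w(n-1)=i$ and $w(n)=j$, and set $\mu'=w^{-1}\cdot\mu\in\Lambda_0$. Using $\mu'+\rho=w^{-1}(\mu+\rho)$,
$$(\mu'+\rho)(h_{n-1}-h_n)=(\mu+\rho)\bigl(w(h_{n-1}-h_n)\bigr)=(\mu+\rho)(h_i-h_j)=(\mu+\rho,\e_i-\e_j)=1,$$
and since $\rho(h_{n-1})-\rho(h_n)=1$ this forces $\mu'_{n-1}=\mu'_n$. Hence Lemma~3.8 applies to $\mu'$ and gives $h(z)(\mu'+t(\e_{n-1}+\e_n))=h(z)(\mu')$ for all $t\in F_p$.

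It remains to transport this identity back along $w$. From $w^{-1}\cdot\nu=w^{-1}(\nu+\rho)-\rho$ and $w^{-1}(\e_i+\e_j)=\e_{n-1}+\e_n$ one obtains
$$w^{-1}\cdot\mu=\mu',\qquad w^{-1}\cdot\bigl(\mu+t(\e_i+\e_j)\bigr)=\mu'+t(\e_{n-1}+\e_n).$$
Applying the $W\cdot$-invariance of $h(z)$ to these two equalities together with the conclusion of the previous step yields
$$h(z)(\mu+t(\e_i+\e_j))=h(z)\bigl(\mu'+t(\e_{n-1}+\e_n)\bigr)=h(z)(\mu')=h(z)(\mu),$$
as required.

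The one point requiring care is the interplay between the linear action of $W$ on $\mathfrak h^*$ and $X(T)$ and the dot action on $\mathfrak h^*$ and on $u(\mathfrak h)$: I must verify that the $\rho$-shift cancels so that $w^{-1}\cdot(\mu+t(\e_i+\e_j))=\mu'+t(\e_{n-1}+\e_n)$, and that $w$ really does act on $\mathfrak h$ — and therefore on $\Lambda_0$ and on $u(\mathfrak h)$ — by $h_k\mapsto h_{w(k)}$, so that the translation of the previous paragraph is legitimate. I note that a direct imitation of the proof of Lemma~3.8, replacing $\tilde e_{n,n-1}$ and $y_{n-1,n}$ by $\tilde e_{ji}$ and $y_{ij}$ (or $z_{ij}$), would be more troublesome, because when $\e_i-\e_j$ is not a simple root the vector $\tilde e_{ji}v_\mu$ need not be maximal; the reduction to the last simple root via $W$ sidesteps this.
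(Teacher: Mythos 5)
Your proof is correct and follows essentially the same route as the paper: both arguments pick a Weyl group element carrying the pair $(i,j)$ to $(n-1,n)$, apply Lemma~3.8 to the transformed weight, and transport the identity back using the $W\cdot$-invariance of $h(z)$ from Lemma~3.7. The only difference is notational — you move $\mu$ by $w^{-1}\cdot$ whereas the paper moves it by $w\cdot$ — and you spell out in more detail the bookkeeping between the linear and dot actions.
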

\begin{proof} Let $\a=\e_s-\e_t, \ s<t$. Then $$s_{\a}(\e_k)=\begin{cases} \e_k, &\text{if $k\neq s,t$}\\ \e_s, &\text{if $k=t$}\\ \e_t, &\text{if $k=s$}.\end{cases}$$ So we can find $w\in W$ satisfying $w(\e_i-\e_j)=\e_{n-1}-\e_n$ and $w(\e_i+\e_j)=\e_{n-1}+\e_n$. By assumption we obtain  $(w(\mu+\rho), \e_{n-1}-\e_n)=1$ or,  equivalently, $$(w\cdot\mu, \e_{n-1}-\e_n)=0.$$ Using the previous lemma we have $$h(z)(w\cdot \mu)=h(z)(w\cdot \mu+t(\e_{n-1}+\e_n))$$ for all $t\in F_p$. Then we get from Lemma 3.7 that  $$\begin{aligned} h(z)(\mu)&=h(z)(w\cdot \mu+t(\e_{n-1}+\e_n))\\&=h(z)(w\cdot (\mu+t(\e_i+\e_j))\\ &=h(z)(\mu+t(\e_i+\e_j))\end{aligned}$$ for all $t\in  F_p$, as required.
\end{proof}
 For  $i\neq j$, set $s_{ij}=\{\mu\in\Lambda_0|(\mu+\rho, \e_i-\e_j)=1\}$.
\begin{lemma}Each $h(z)\in h(\mathcal Z)$ is constant on $\cup_{i\neq j}s_{ij}$.
\end{lemma}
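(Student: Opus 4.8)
The plan is to recast the statement coordinate-wise and reduce it to a combinatorial connectivity fact. Choose $\rho=\sum_i\rho_i\epsilon_i$ with $\rho_i-\rho_{i+1}=1$, and for $\mu=\sum_i\mu_i\epsilon_i$ put $\lambda_i=\mu_i+\rho_i$, so that $(\mu+\rho,\epsilon_i-\epsilon_j)=\lambda_i-\lambda_j$. Under the translation $\mu\mapsto\mu+\rho$ of $\Lambda_0$, the set $\bigcup_{i\neq j}s_{ij}$ becomes $\Lambda'=\{\lambda\in\Lambda_0:\lambda_i-\lambda_j=\pm1\text{ for some }i\neq j\}$, the dot action of $W$ becomes the permutation action of $S_n$ on the coordinates $\lambda_1,\dots,\lambda_n$, and $h(z)$ becomes a function $f$ on $\Lambda_0$, well defined by Lemma~3.5. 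By Lemma~3.7, $f$ is invariant under permuting coordinates; by Lemma~3.9 applied to the ordered pair $(i,j)$ or $(j,i)$ according to the sign, whenever $\lambda_i-\lambda_j=\pm1$ the value $f(\lambda)$ is constant along the whole line $\lambda+F_p(\epsilon_i+\epsilon_j)$, which stays inside $\Lambda'$ since $\lambda_i-\lambda_j$ is preserved. Hence it suffices to prove that the graph $\Gamma_n$ on the vertex set $\Lambda'$, with edges given by (a) permutations of coordinates and (b) the shifts $\lambda\mapsto\lambda+t(\epsilon_i+\epsilon_j)$ when $\lambda_i-\lambda_j=\pm1$, is connected; then $f$, being constant on edges, is constant on $\Lambda'=\bigcup_{i\neq j}s_{ij}$.

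To show $\Gamma_n$ is connected I would prove by induction on $n$ that every $\lambda\in\Lambda'$ is joined to $(1,0,\dots,0)$. For $n=2$ this is clear: move (b) translates any consecutive pair, and move (a) interchanges $(c+1,c)$ with $(c,c+1)$. For $n\geq3$, first move a pair with $\lambda_i-\lambda_j=\pm1$ into positions $1,2$ so that $\lambda_1-\lambda_2=1$, then use move (b) to normalize $(\lambda_1,\lambda_2)=(1,0)$, and then zero out $\lambda_3,\dots,\lambda_n$ one at a time. The decisive sub-step is that, for any $c\in F_p$ and any $k$, the entries at positions $\{1,2,k\}$ can be carried from $(1,0,c)$ to $(1,0,c-2)$ without disturbing the remaining coordinates, along the chain
$$(1,0,c)\to(c,c-1,c)\to(c,-1,0)\to(c,c-2,c-1)\to(1,c-2,0)\to(1,0,c-2),$$
where each arrow but the last is a single shift of type (b) and the last is the transposition of positions $2$ and $k$. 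Since $p$ is odd, $2$ is invertible in $F_p$, so iterating $c\mapsto c-2$ reaches $c=0$; performing this successively for $k=3,4,\dots,n$ (each stage fixing the coordinates already set to $0$ and leaving $(\lambda_1,\lambda_2)=(1,0)$) brings $\lambda$ to $(1,0,\dots,0)$, completing the induction and the proof.

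The routine part is the bookkeeping: verifying that at each arrow the two shifted coordinates really differ by $\pm1$, hence that every intermediate vector lies in $\Lambda'$ — all immediate from the displayed chain. \textbf{The main obstacle is discovering this chain of moves.} The obvious attempts fail: one cannot stay inside a single $s_{ij}$, and inside $s_{n-1,n}$ the only moves directly at hand permute and shift the coordinates $\lambda_1,\dots,\lambda_{n-2}$ — which is useless, for instance, when these are all equal — so genuine detours through the other $s_{kl}$ are unavoidable. Once the manoeuvre $(1,0,c)\to(1,0,c-2)$ is in hand, the oddness of $p$ finishes the job, and everything else is reduction and routine verification.
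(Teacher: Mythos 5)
Your proof is correct and rests on the same two tools as the paper's — $W\cdot$-invariance of $h(z)$ (Lemma~3.7) and invariance along the lines $\mu+F_p(\epsilon_i+\epsilon_j)$ when $(\mu+\rho,\epsilon_i-\epsilon_j)=1$ (Lemma~3.9) — but it packages them differently. The paper first uses $W$-invariance to reduce to constancy on a single set $s_{12}$, and then, for $\mu\in s_{12}$, realizes the shift $\mu\mapsto\mu+2\epsilon_3$ by the short chain $\mu\to\lambda=\mu+(\mu_3-\mu_1)(\epsilon_1+\epsilon_2)\in s_{23}\to\nu=\lambda+(\mu_1-\mu_3+1)(\epsilon_2+\epsilon_3)\to w\cdot\nu$ with $w=s_{\epsilon_1-\epsilon_2}s_{\epsilon_1-\epsilon_3}$ (two type-(b) shifts plus one Weyl element), iterating and using $p>2$ to get all $t\epsilon_i$, $i>2$, and implicitly also the $(\epsilon_1+\epsilon_2)$-shift to finish. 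You instead translate by $\rho$ so the dot action becomes the ordinary permutation action, recast the statement as connectivity of a graph $\Gamma_n$ on $\Lambda'$, and prove connectivity by an explicit induction to the target vertex $(1,0,\dots,0)$. The decisive sub-step in your version is the five-move chain $(1,0,c)\to(c,c-1,c)\to(c,-1,0)\to(c,c-2,c-1)\to(1,c-2,0)\to(1,0,c-2)$, which achieves the same essential shift-by-$2$ as the paper does (in the opposite direction, with a somewhat longer chain — in fact your second and third moves both shift positions $2,k$ and could be merged into a single shift by $-1$, so you could shave one step). Your reformulation has the advantage of making the global endgame fully explicit and the per-coordinate normalization cleaner; the paper's calculation is shorter but leaves the last step ("therefore $h(z)$ is constant on $s_{12}$") partly implicit. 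Both are valid, and nothing is missing from your argument.
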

\begin{proof}(cf. \cite[Lemma 4.4]{sv}).  By the proof of above lemma, we see that, for each $s_{ij}$ and each $\mu\in s_{ij}$, there is  $w\in W$ such that $w\cdot \mu\in s_{n-1,n}$. Since $h(z)\in u(\mathfrak h)^{W\cdot}$, it suffices to show that $h(z)$ is constant on $s_{12}$. This is immediate from Lemma 3.8 when $n=2$. \par Suppose $n>2$. Let $\mu=\sum_{i=1}^n\mu_i\e_i\in s_{12}$. Set $$\l=\mu+(\mu_3-\mu_1)(\e_1+\e_2)\  \text{and}\  \nu=\l +(\mu_1-\mu_3+1)(\e_2+\e_3).$$ Then we have $\l\in s_{23}$. By Lemma 3.9 we have $$h(z)(\mu)=h(z)(\l)=h(z)(\nu).$$ Let $w=s_{\e_1-\e_2}s_{\e_1-\e_3}\in W$. By a short computation we see that $w\cdot \nu=\mu+2\e_3$. Then we have $$\begin{aligned}h(z)(\mu)&=h(z)(\nu)\\ &=h(z)(w\cdot \nu)\\&=h(z)(\mu+2\e_3),\end{aligned}$$ and hence $h(z)(\mu)=h(z)(\mu +2k\e_3)$ for all $k\in F_p$. Since $p>2$,  we get $h(z)(\mu)=h(z)(\mu +t\e_3)$ for all $t\in F_p$. Similarly, we have $$h(z)(\mu)=h(z)(\mu+t\e_i)$$ for all $i>2$ and all $t\in F_p$. Therefore, $h(z)$ is constant on $s_{12}$.
\end{proof}
\begin{lemma} Let $h(z)\in h(\mathcal Z)$ be such that $h(z)|_{s_{n-1,n}}=0$. Then $(h_{n-1}-h_n)|h(z)$ in $u(\mathfrak h)$.
\end{lemma}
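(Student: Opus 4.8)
The plan is to identify $u(\mathfrak h)$ with the algebra of $F$-valued functions on $\Lambda_0$, after which the divisibility assertion becomes an inclusion of zero sets.

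First I would recall that, since $\mathfrak h$ is abelian with $h_i^{[p]}=h_i$, one has $u(\mathfrak h)=F[h_1,\dots,h_n]/(h_1^p-h_1,\dots,h_n^p-h_n)$. As each $h_i^p-h_i=\prod_{c\in F_p}(h_i-c)$ is a product of distinct linear factors over $F$, the Chinese Remainder Theorem gives an algebra isomorphism
$$
u(\mathfrak h)\;\cong\;\prod_{\mu\in\Lambda_0}F,\qquad f\longmapsto\bigl(f(\mu)\bigr)_{\mu\in\Lambda_0},
$$
where $f(\mu)$ is the evaluation $h_i\mapsto\mu_i$ used throughout the paper; its injectivity is exactly Lemma 3.5, and the two sides have the same dimension $p^n=|\Lambda_0|$. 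Thus both $h(z)$ and $h_{n-1}-h_n$ may be regarded as tuples of scalars indexed by $\Lambda_0$.

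Second, I would use the elementary observation that in a finite product of fields $R=\prod_\mu F$ an element $a=(a_\mu)_\mu$ divides an element $b=(b_\mu)_\mu$ precisely when $b_\mu=0$ whenever $a_\mu=0$: one divides coordinatewise on the locus where $a_\mu\neq 0$, and the remaining coordinates of the quotient are unconstrained. Crucially, the quotient so produced automatically lies in $R$, hence corresponds to an element of $u(\mathfrak h)$; this is what makes the assertion ``in $u(\mathfrak h)$'' correct and removes any need for degree bookkeeping (the analogous statement in $U(\mathfrak h)$ would be false).

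Third, I would identify the zero set of $h_{n-1}-h_n$ on $\Lambda_0$. Since $h_{\e_{n-1}-\e_n}=h_{n-1}-h_n$ and $\rho(h_{n-1}-h_n)=1$, for $\mu=\sum_i\mu_i\e_i\in\Lambda_0$ we have $(\mu+\rho,\e_{n-1}-\e_n)=\mu_{n-1}-\mu_n+1$, so $(\mu+\rho,\e_{n-1}-\e_n)=1$ holds exactly when $\mu_{n-1}=\mu_n$, that is, exactly when $(h_{n-1}-h_n)(\mu)=0$. Hence the zero set of $h_{n-1}-h_n$ on $\Lambda_0$ is precisely $s_{n-1,n}$. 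Combining the three steps, the hypothesis $h(z)|_{s_{n-1,n}}=0$ says that the zero set of $h_{n-1}-h_n$ is contained in that of $h(z)$, and therefore $(h_{n-1}-h_n)\mid h(z)$ in $u(\mathfrak h)$.

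There is no genuine obstacle here; the only points needing care are (i) that the CRT identification matches the ``evaluate at $\mu$'' convention of the paper --- which it does --- and (ii) that one works in the semisimple ring $u(\mathfrak h)$ rather than in $U(\mathfrak h)$, so the quotient is guaranteed to exist in $u(\mathfrak h)$. Note also that the argument uses nothing about $h(z)$ beyond its vanishing on $s_{n-1,n}$, so the hypothesis $h(z)\in h(\mathcal Z)$ is not actually needed for this lemma.
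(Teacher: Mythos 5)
Your proposal is correct, and it takes a genuinely different and cleaner route than the paper. The paper works directly: it writes $h(z)=\sum_{0\le i,j\le p-1}c_{ij}h_{n-1}^ih_n^j$ with $c_{ij}\in F[h_1,\dots,h_{n-2}]$, evaluates at $\mu\in s_{n-1,n}$ (where $\mu_{n-1}=\mu_n$), groups the terms by residue class $i+j\equiv s\pmod p$, invokes Lemma~3.5 to conclude $\sum_{i+j\equiv s}c_{ij}=0$ in $u(\mathfrak h)$ for each $s$, and then factors each such group as a sum of terms $c_{ij}(h_{n-1}^i-h_n^i)h_n^{\,\cdot}$, all visibly divisible by $h_{n-1}-h_n$. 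Your argument instead identifies $u(\mathfrak h)$ with $\prod_{\mu\in\Lambda_0}F$ via the evaluation map (CRT plus Lemma~3.5 for injectivity and a dimension count for surjectivity), reduces divisibility in that semisimple ring to inclusion of zero sets, and observes that the zero set of $h_{n-1}-h_n$ on $\Lambda_0$ is exactly $s_{n-1,n}$ (since $\Theta_{n-1,n}=h_{n-1}-h_n$ and $\Theta_{n-1,n}(\mu)=(\mu+\rho,\e_{n-1}-\e_n)-1$). Both proofs rest on Lemma~3.5; yours encapsulates the calculation once and for all in a ring isomorphism, which makes the conclusion transparent, shows that the hypothesis $h(z)\in h(\mathcal Z)$ is superfluous for this lemma, and would equally yield Corollary~3.12 directly (with $\Theta_{ij}$ in place of $h_{n-1}-h_n$) without passing through a Weyl-group conjugation. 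The paper's approach is more elementary and avoids appealing to CRT, but at the cost of some bookkeeping with exponents modulo $p$.
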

\begin{proof} Write  $$h(z)=\sum _{0\leq i,j\leq p-1}c_{ij}h_{n-1}^ih_n^j,$$ where each $c_{ij}\in u(\mathfrak h)$ is a polynomial of the variables $h_1,\dots, h_{n-2}$. In particular, if $n=2$, each $c_{ij}$ is just a scalar. By assumption, we have $h(z)(\mu)=0$ if $\mu\in \Lambda_0$ with $\mu_{n-1}=\mu_n$. \par
Note that $$h(z)=\sum_{s=0}^{p-1}(\sum_{i+j\equiv s} c_{ij}h_{n-1}^ih_n^j)$$ and  $k^{i+j}=k^s$ for all $k\in F_p$ if $i+j\equiv s$. Then we have for any $\mu\in s_{n-1,n}$ that $$\begin{aligned} h(z)(\mu)&=\sum_{s=0}^{p-1}(\sum_{i+j\equiv s}c_{ij}(\mu)\mu_n^{i+j})\\&=\sum_{s=0}^{p-1}(\sum_{i+j\equiv s}c_{ij}(\mu))\mu_n^s\\&=0,\end{aligned}$$
implying that  $\sum_{i+j\equiv s}c_{ij}(\mu_1,\dots,\mu_{n-2})=0$ for all $s\in F_p$.  It follows from Lemma 3.5 that $\sum_{i+j\equiv s}c_{ij}=0\in u(\mathfrak h)$ for $s$. Since $h_n^s=h_n^{p+s}\in u(\mathfrak h)$, we obtain $$\begin{aligned}\sum_{i+j\equiv s}c_{ij}h_{n-1}^ih_n^j&=\sum_{i+j\equiv s}c_{ij}h_{n-1}^ih_n^j-\sum_{i+j\equiv s}c_{ij}h_n^s\\
&=\sum_{i+j=s}c_{ij}(h_{n-1}^ih_n^j-h_n^s)+\sum_{i+j=p+s}c_{ij}(h_{n-1}^ih_n^j-h_n^{p+s})\\
&=\sum_{i+j=s}c_{ij}(h_{n-1}^i-h_n^i)h_n^{s-i}+\sum_{i+j=p+s}c_{ij}(h_{n-1}^i-h_n^i)h_n^{p+s-i},\end{aligned}$$
which gives $(h_{n-1}-h_n)|h(z)$, as required.

\end{proof}
For $i\neq j$, define $\Theta_{ij}\in u(\mathfrak h)$ by $$\Theta_{ij}(\mu)=(h_i-h_j)(\mu+\rho)-1=(\mu+\rho,\e_i-\e_j)-1,\  \mu\in \Lambda_0.$$ Then a short computation shows that $\Theta_{ij}=h_i-h_j+j-i-1$.\par
\begin{corollary} Let $i\neq j$ and let $h(z)\in h(\mathcal Z)$ be such that $h(z)|_{s_{ij}}=0$. Then $\Theta_{ij}|h(z)$
in $u(\mathfrak h)$.
\end{corollary}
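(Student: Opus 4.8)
The plan is to reduce the corollary to Lemma 3.11 by applying a suitable element $w\in W$ that carries $\e_i-\e_j$ to $\e_{n-1}-\e_n$, and then transporting divisibility back along $w$. First I would observe, exactly as in the proof of Lemma 3.9, that for any pair $i\neq j$ there is $w\in W$ with $w(\e_i-\e_j)=\e_{n-1}-\e_n$ (this is immediate since $W=S_n$ acts on the $\e$'s by permutations, and $s_\a$ permutes the $\e_k$ as recorded in the proof of Lemma 3.9). Such a $w$ satisfies $w\cdot s_{ij}=s_{n-1,n}$: indeed $\mu\in s_{ij}$ means $(\mu+\rho,\e_i-\e_j)=1$, and since $\theta_{|\mathfrak h}$ is $W$-invariant and $w(\mu+\rho)=(w\cdot\mu)+\rho$, we get $(w\cdot\mu+\rho,\e_{n-1}-\e_n)=1$, i.e. $w\cdot\mu\in s_{n-1,n}$; the dot action is a bijection on $\Lambda_0$, so $w\cdot s_{ij}=s_{n-1,n}$.

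Next I would apply the dot action of $w$ to $h(z)$. Since $h(z)\in h(\mathcal Z)\subseteq u(\mathfrak h)^{W\cdot}$ by Lemma 3.7, we have $w\cdot h(z)=h(z)$. Now set $g=h(z)$ and note that the hypothesis $g|_{s_{ij}}=0$ translates, via $(w\cdot g)(\l)=g(w^{-1}\cdot\l)$, into $g(w^{-1}\cdot\l)=0$ for all $\l\in s_{n-1,n}$; but $w^{-1}\cdot s_{n-1,n}=s_{ij}$, so this is just the given hypothesis restated. The point is that $w\cdot g=g$ vanishes on $s_{n-1,n}$ as a function, hence by Lemma 3.11 we get $(h_{n-1}-h_n)\mid g$ in $u(\mathfrak h)$, say $g=(h_{n-1}-h_n)\,q$ with $q\in u(\mathfrak h)$. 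Applying $w^{-1}$ (as an automorphism of $u(\mathfrak h)$ coming from the $W$-action constructed before Lemma 3.7, which sends $h_{n-1}-h_n=h_{\e_{n-1}-\e_n}$ to $h_{\e_i-\e_j}=h_i-h_j$, up to sign, by the $W$-equivariance $s_\a(h_\b)=h_{s_\a\b}$ noted after the definition of the dot action): since $w^{-1}\cdot g=g$, we obtain $g=\pm(h_i-h_j+c)\,(w^{-1}q)$ for the appropriate constant $c$ making this the dot-shifted generator.

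The remaining point is to pin down that the shifted factor is exactly $\Theta_{ij}=h_i-h_j+j-i-1$ rather than some other additive translate. Here I would use the explicit formula $\Theta_{ij}(\mu)=(\mu+\rho,\e_i-\e_j)-1$: the subvariety $s_{ij}=\{\mu:(\mu+\rho,\e_i-\e_j)=1\}$ is precisely the zero locus of $\Theta_{ij}$ on $\Lambda_0$, and $\Theta_{ij}$ is a degree-one polynomial in the $h_k$ with unit leading part. Since $g$ vanishes on this zero locus, and since by Lemma 3.5 a polynomial in $u(\mathfrak h)$ is determined by its values on $\Lambda_0$, one argues directly as in Lemma 3.11 — replacing the pair $(n-1,n)$ by $(i,j)$ throughout — that $\Theta_{ij}\mid g$ in $u(\mathfrak h)$; equivalently, transport the factorization $g=(h_{n-1}-h_n)q$ through $w^{-1}$ and check the constant via the dot-action formula $w^{-1}\cdot(h_{n-1}-h_n)$ evaluated against $\rho$. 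The main obstacle is this bookkeeping of the additive constant under the dot action (making sure $j-i-1$ comes out correctly and not its negative or a reflected value); everything else is a routine transfer of Lemma 3.11 along the Weyl-group symmetry already available.
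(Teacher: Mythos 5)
Your approach matches the paper's: choose $w\in W$ with $w(\e_i-\e_j)=\e_{n-1}-\e_n$, use $w\cdot h(z)=h(z)$ (Lemma 3.7) together with $w\cdot s_{ij}=s_{n-1,n}$ to conclude that $h(z)$ vanishes on $s_{n-1,n}$, invoke Lemma 3.11, and transport the factor back through $w^{-1}$. The constant-bookkeeping worry you flag at the end is exactly what the paper settles first, by the short evaluation $\Theta_{ij}(\mu)=(\mu+\rho,\e_i-\e_j)-1=(w\cdot\mu+\rho,\e_{n-1}-\e_n)-1=\Theta_{n-1,n}(w\cdot\mu)=(w^{-1}\cdot\Theta_{n-1,n})(\mu)$ for all $\mu\in\Lambda_0$, which by Lemma 3.5 gives the exact identity $\Theta_{ij}=w^{-1}\cdot\Theta_{n-1,n}$ with the correct additive constant, so applying $w^{-1}\cdot$ to the factorization $h(z)=\Theta_{n-1,n}\,q$ immediately yields $\Theta_{ij}\mid h(z)$ with no sign or translate ambiguity; note that rerunning the proof of Lemma 3.11 with $(i,j)$ in place of $(n-1,n)$, your alternative option, is less clean because that proof exploits the vanishing constant term in $\Theta_{n-1,n}=h_{n-1}-h_n$.
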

\begin{proof} There is $w\in W$ such that $w(\e_i-\e_j)=\e_{n-1}-\e_n$. Let $\mu\in s_{ij}$. Since $(\mu+\rho, \e_i-\e_j)=1$, so that $(w(\mu+\rho),w(\e_i-\e_j))=1$, we have $(w\cdot\mu+\rho, \e_{n-1}-\e_n)=1$, and hence
$w\cdot \mu\in s_{n-1,n}$. Therefore, we get $w\cdot s_{ij}=s_{n-1,n}$.\par
Meanwhile, since $$\begin{aligned} \Theta_{ij}(\mu)&=(\mu+\rho,\e_i-\e_j)-1\\&=(w\cdot \mu+\rho, \e_{n-1}-\e_n)-1\\&=\Theta_{n-1,n}(w\cdot \mu)\\&=(w^{-1}\cdot \Theta_{n-1,n})(\mu)\ \text{for all} \ \mu
\in \Lambda_0,\end{aligned}$$ we have $\Theta_{ij}=w^{-1}\cdot \Theta_{n-1,n}$.\par
Since $w\cdot h(z)=h(z)$ and since $w^{-1}\cdot \l\in s_{ij}$ for $\l\in s_{n-1,n}$, we have by assumption that $$h(z)(\l)=h(z)(w^{-1}\cdot \l)=0.$$ Note that  $\Theta_{n-1,n}=h_{n-1}-h_n$.  Then  Lemma 3.11  gives
$\Theta_{n-1,n}|h(z)$, and hence $w^{-1}\cdot\Theta_{n-1,n}|w^{-1}\cdot h(z)$, implying that $\Theta_{ij}|h(z)$, as required.
\end{proof}
Let $z_i=h_i^p-h_i$ for $1\leq i\leq n$. By \cite[Theorem 5.1.2]{sf}, $$\mathcal O'=:F[z_1,\dots, z_n]\subseteq U(\mathfrak h)$$ is a polynomial ring. Clearly we see that $\mathcal O'=\mathcal O'_{F_p}\otimes_{F_p}F$, where $\mathcal O'_{F_p}=F_p[z_1,\dots, z_n]$. Then we have $$u(\mathfrak h)=U(\mathfrak h)/\mathcal O'U(\mathfrak h)=U(\mathfrak h_{F_p})\otimes_{F_p}F/(\mathcal O'_{F_p}U(\mathfrak h_{F_p}))\otimes_{F_p}F\cong u(\mathfrak h_{F_p})\otimes_{F_p} F.$$
\begin{lemma} $u(\mathfrak h)^{W\cdot}=u(\mathfrak h_{F_p})^{W\cdot}\otimes_{F_p}F$.
\end{lemma}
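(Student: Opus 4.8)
The plan is to realize the dot action of $W$ on $u(\mathfrak h)$ as the extension of scalars of a dot action on $u(\mathfrak h_{F_p})$, and then to invoke the fact that forming invariants under a finite group commutes with the field extension $F_p\subseteq F$. For the first point: from the formula $s_{\a}\cdot h=s_{\a}(h)-\a(h)\,1$ for simple $\a=\e_i-\e_{i+1}$ recorded above, together with $s_{\a}(h_j)=h_j-\a(h_j)(h_i-h_{i+1})$ and $\a(h_j)=\delta_{ij}-\delta_{i+1,j}$, one computes
$$s_{\a}\cdot h_i=h_{i+1}-1,\qquad s_{\a}\cdot h_{i+1}=h_i+1,\qquad s_{\a}\cdot h_j=h_j\ \ (j\neq i,i+1).$$
Since the dot action is by algebra automorphisms, a general $w\in W=S_n$ sends $h_j$ to $h_{\sigma_w(j)}+c^w_j$ for some permutation $\sigma_w$ and integers $c^w_j$, so $w\cdot$ is the reduction modulo $(z_1,\dots,z_n)$ of an algebra endomorphism of $F_p[h_1,\dots,h_n]$ defined over $F_p$; the ideal $(z_1,\dots,z_n)$ is stable under it because $w\cdot z_j=(h_{\sigma_w(j)}+c^w_j)^p-(h_{\sigma_w(j)}+c^w_j)=h_{\sigma_w(j)}^p-h_{\sigma_w(j)}=z_{\sigma_w(j)}$, using the Frobenius identity $(h+c)^p=h^p+c$ valid for $c\in F_p$. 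Hence the same formulas define a dot action of $W$ on $u(\mathfrak h_{F_p})$; in the common monomial basis $\{h_1^{a_1}\cdots h_n^{a_n}\mid 0\le a_i\le p-1\}$ each $w\cdot$ has matrix with entries in $F_p$, so under $u(\mathfrak h)\cong u(\mathfrak h_{F_p})\otimes_{F_p}F$ the dot action of $W$ on $u(\mathfrak h)$ is the $F$-linear extension of the dot action of $W$ on $u(\mathfrak h_{F_p})$.

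Granting this, I would finish by a base-change argument. Let $\Phi\colon u(\mathfrak h_{F_p})\to\bigoplus_{w\in W}u(\mathfrak h_{F_p})$ be the $F_p$-linear map $x\mapsto(w\cdot x-x)_{w\in W}$, so that $\ker\Phi=u(\mathfrak h_{F_p})^{W\cdot}$. Applying the exact functor $-\otimes_{F_p}F$ to the left-exact sequence $0\to\ker\Phi\to u(\mathfrak h_{F_p})\xrightarrow{\Phi}\bigoplus_{w\in W}u(\mathfrak h_{F_p})$, and using the previous paragraph (so that $\Phi\otimes_{F_p}\mathrm{id}_F$ is the analogous map for $u(\mathfrak h)$), one obtains
$$u(\mathfrak h)^{W\cdot}=\ker(\Phi\otimes_{F_p}\mathrm{id}_F)=(\ker\Phi)\otimes_{F_p}F=u(\mathfrak h_{F_p})^{W\cdot}\otimes_{F_p}F.$$
Alternatively, since $p>2d=n(n-1)\ge n$, every prime divisor of $|W|=n!$ is $<p$, hence $p\nmid|W|$, and the averaging operator $u\mapsto|W|^{-1}\sum_{w\in W}w\cdot u$ is a projection of $u(\mathfrak h_{F_p})$ onto its dot-invariants which is defined over $F_p$; this again yields the claim after tensoring with $F$.

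The only step that really needs care is the first one: verifying that the dot action genuinely stabilizes the $F_p$-form $u(\mathfrak h_{F_p})$, i.e.\ that after the substitution $h_j\mapsto h_{\sigma_w(j)}+c^w_j$ and reduction modulo the $z_i$ one still has coefficients in $F_p$. This rests entirely on the integrality of the formulas for the simple reflections on the $h_j$ and on the Frobenius identity above; once that is secured, the rest is standard linear algebra over $F_p$.
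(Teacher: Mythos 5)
Your proof is correct, and it takes a genuinely different route from the paper's. The paper expands a given $x\in u(\mathfrak h)^{W\cdot}$ as $\sum_i f_i(h)\otimes a_i$ along an $F_p$-basis $\{a_i\}$ of $F$, applies $w\cdot$, evaluates the resulting relation at each $\lambda\in\Lambda_0$ (where all the coefficients $(w\cdot f_i-f_i)(\lambda)$ lie in $F_p$, so the $F_p$-linear independence of the $a_i$ forces each of them to vanish), and then invokes Lemma~3.5 to conclude $w\cdot f_i=f_i$ in $u(\mathfrak h_{F_p})$. Your argument replaces this evaluation-and-independence step with the general fact that kernels commute with the flat base change $-\otimes_{F_p}F$, applied to the map $x\mapsto(w\cdot x-x)_{w\in W}$; your alternative via the Reynolds operator (using $p\nmid|W|=n!$, which holds since $p>n(n-1)\geq n$) is equally valid and even shorter. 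Both arguments hinge on the same underlying fact --- that the dot action of $W$ on $u(\mathfrak h)\cong u(\mathfrak h_{F_p})\otimes_{F_p}F$ is the $F$-linear extension of an $F_p$-defined action --- which the paper uses silently (it had already checked that the dot action descends to $u(\mathfrak h)$), whereas you verify it explicitly from the formulas $s_\alpha\cdot h_i=h_{i+1}-1$, $s_\alpha\cdot h_{i+1}=h_i+1$ and the stability $w\cdot z_j=z_{\sigma_w(j)}$ of the ideal defining $u(\mathfrak h)$. The paper's approach is more hands-on and reuses the tool (Lemma~3.5) it already built; yours is more conceptual and would generalize unchanged to any finite-group action on a finite-dimensional algebra defined over the prime field.
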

\begin{proof} Let $\{a_i\}_{i\in I}$ be a basis of the $F_p$-vector space $F$, and let $$x=\sum f_i(h)\otimes a_i\in (u(\mathfrak h_{F_p})\otimes F)^{W\cdot}.$$ Since $w\cdot x=x$ for every $w\in W$, we get $\sum _i (w\cdot f_i(h)-f_i(h))\otimes a_i=0$ for all $w$. Substituting $h$ by each $\l\in\Lambda_0$,  we get, for each $i$, $(w\cdot f_i(h)-f_i(h))(\l)=0$ for all $\l\in\Lambda_0$. Then Lemma 3.5 yields $w\cdot f_i(h)-f_i(h)=0$ for all $w\in W$, and hence $f_i(h)\in u(\mathfrak h_{F_p})^{W\cdot}$, as desired.
\end{proof}
\begin{theorem}  Assume $p>2d$. Then $h(\mathcal Z)\subseteq \Theta u(\mathfrak h)^W+ F$.
\end{theorem}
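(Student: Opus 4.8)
The plan is to combine the divisibility statements already proved (Corollary 3.12 and Lemma 3.13) with a dimension/degree count, working entirely over the prime field $F_p$ thanks to Lemma 3.14. Since $h(\mathcal Z)\subseteq u(\mathfrak h)^{W\cdot}$ by Lemma 3.7, and since $u(\mathfrak h)^{W\cdot}=u(\mathfrak h_{F_p})^{W\cdot}\otimes_{F_p}F$, it suffices to prove the statement for an arbitrary $h(z)\in h(\mathcal Z)$ that lies in $u(\mathfrak h_{F_p})^{W\cdot}$, i.e. to show $h(z)\in \Theta u(\mathfrak h_{F_p})^{W\cdot}+F_p$ (then tensor up). Replacing $h(z)$ by $h(z)-c$ for a suitable constant $c\in F_p$, I may by Lemma 3.10 assume $h(z)$ vanishes on $\bigcup_{i\neq j}s_{ij}$, and the goal becomes $\Theta\mid h(z)$ in $u(\mathfrak h)$ with quotient again $W\cdot$-invariant.

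First I would establish the divisibility $\Theta\mid h(z)$. By Corollary 3.12, for each pair $i\neq j$ we have $\Theta_{ij}\mid h(z)$, and $\Theta=\pm\Pi_{i\neq j}\Theta_{ij}$ up to the harmless reindexing in the definition of $\Theta$ (recall $\Theta(\mu)=\Pi_{i\neq j}((h_i-h_j)(\mu+\rho)-1)$ and $\Theta_{ij}=h_i-h_j+j-i-1$). The $\Theta_{ij}$ are pairwise non-associate linear (degree-one, hence prime) elements of the polynomial ring $U(\mathfrak h)=S(\mathfrak h)$; the point is that since $p>2d=\deg\Theta$, the element $h(z)$ — viewed as living in $u(\mathfrak h)=U(\mathfrak h)/\mathcal O'U(\mathfrak h)$ — has a canonical polynomial representative of degree $<p$ in each $h_i$, and the divisibility statements of Corollary 3.12 can be lifted to genuine divisibilities in $S(\mathfrak h)$ where unique factorization applies. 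So $\Pi_{i\neq j}\Theta_{ij}=\pm\Theta$ divides $h(z)$ in $S(\mathfrak h)$, and since $\deg\Theta=2d<p$ the quotient $q:=h(z)/\Theta$ again has all $h_i$-degrees $<p$, hence gives a well-defined element of $u(\mathfrak h)$ with $h(z)=\Theta q$.

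Next I would check that $q\in u(\mathfrak h)^{W\cdot}$. Both $\Theta$ and $h(z)$ are $W\cdot$-invariant, and the dot action of $W$ permutes the $\Theta_{ij}$ (this was essentially observed before the definition of $\Theta$: $w$ permutes $\{h_\alpha\mid\alpha\in\Phi_0\}$, and the constant shifts match up because $w$ acts on $\Lambda_0$ by the dot action, cf. the computation in Corollary 3.12 showing $\Theta_{ij}=w^{-1}\cdot\Theta_{n-1,n}$); hence $w\cdot\Theta=\pm\Theta$ — in fact $=\Theta$ as noted in the text. Since $S(\mathfrak h)$ is a domain, $w\cdot q=(w\cdot h(z))/(w\cdot\Theta)=h(z)/\Theta=q$, so $q$ is $W\cdot$-invariant. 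Restoring the constant, $h(z)\in\Theta u(\mathfrak h)^{W\cdot}+F$. Finally I would upgrade $u(\mathfrak h)^{W\cdot}$ to $u(\mathfrak h)^{W}$ in the statement: since the dot action differs from the linear action by a translation by $\rho$, and the automorphism of $u(\mathfrak h)$ given by $h_\alpha\mapsto h_\alpha,\ h_i\mapsto h_i+\rho(h_i)$ intertwines them (this is exactly the point that the dot action is $\rho$-independent, used earlier), $\Theta u(\mathfrak h)^{W\cdot}$ and $\Theta u(\mathfrak h)^{W}$ agree after this translation — more carefully, one applies the translation automorphism to the conclusion of the previous paragraph and uses that it fixes the coset $+F$ and carries $\Theta\cdot(W\cdot\text{-invariants})$ to $\Theta'\cdot(W\text{-invariants})$ for a reindexed $\Theta'$ differing from $\Theta$ by a unit.

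The main obstacle I anticipate is the lifting step: Corollary 3.12 is stated as a divisibility \emph{in} $u(\mathfrak h)$, which is not a UFD, so I must be careful that "$\Theta_{ij}\mid h(z)$ for all $i,j$" really does imply "$\Theta\mid h(z)$". The resolution is the degree bound $p>2d$: one fixes the degree-$<p$-in-each-$h_i$ representatives in $S(\mathfrak h)$, observes that $\Theta_{ij}\mid h(z)$ in $u(\mathfrak h)$ forces $\Theta_{ij}\mid h(z)$ in $S(\mathfrak h)$ for these representatives (a division-with-remainder argument: dividing $h(z)$ by the monic-in-$h_i$ linear polynomial $\Theta_{ij}$ in $S(\mathfrak h)$ produces a remainder of $h_i$-degree $0$, which must vanish mod $\mathcal O'U(\mathfrak h)$ and hence, being of low degree, vanish outright), and then invokes unique factorization in $S(\mathfrak h)$ together with $\deg\Theta<p$ to keep the quotient inside the chosen range. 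Everything else is the bookkeeping of the $W\cdot$-action already developed in the surrounding lemmas.
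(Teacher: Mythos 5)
Your plan diverges from the paper's at the crucial step, and the divergence creates a genuine gap: the divisibility $\Theta_{ij}\mid (h(z)-k)$ in $u(\mathfrak h)$ does \emph{not} lift to a divisibility of the canonical low-degree representative in $S(\mathfrak h)$, and the division-with-remainder argument you sketch to justify the lift is incorrect. When you divide the representative $\widetilde{h(z)}-k$ (with all $h_i$-degrees $<p$) by $\Theta_{ij}=h_i-h_j+(j-i-1)$, viewed as a monic linear polynomial in $h_i$, the remainder $r$ is obtained by substituting $h_i\mapsto h_j-(j-i-1)$; this produces a polynomial in the remaining variables whose $h_j$-degree can be as large as $2(p-1)$. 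Such an $r$ can lie in $\mathcal O'U(\mathfrak h)$ without being zero — it is not ``of low degree'' in the sense needed to invoke Lemma~3.5. Concretely, with $p=5$, $n=2$, $\Theta_{12}=h_1-h_2$ and the low-degree polynomial $f=h_1h_2^4+h_1^4h_2-h_1-h_2$, one has $f(h_2,h_2)=2(h_2^5-h_2)\neq 0$ in $S(\mathfrak h)$, yet $f$ vanishes on $\{\mu_1=\mu_2\}$ so $\Theta_{12}\mid f$ in $u(\mathfrak h)$. Thus $\Theta_{12}\nmid f$ in $S(\mathfrak h)$ even though the divisibility holds in $u(\mathfrak h)$. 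Since your step from ``$\Theta_{ij}\mid(h(z)-k)$ for all $i\neq j$'' to ``$\Theta\mid(h(z)-k)$'' rests entirely on this lift plus unique factorization in $S(\mathfrak h)$, the argument breaks here.

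The paper's proof sidesteps lifting entirely and works inside $u(\mathfrak h)$. It first observes the cheap divisibility $\Theta=\prod_{i\neq j}\Theta_{ij}\mid(h(z)-k)^{2d}$ (one factor of $\Theta_{ij}$ from each of the $2d$ copies of $h(z)-k$), hence $\Theta\mid(h(z)-k)^{p}$ since $p>2d$. It then invokes Lemma~3.13 to write $h(z)-k=\sum_i h_i(z)\otimes a_i$ with $h_i(z)\in u(\mathfrak h_{F_p})^{W\cdot}$ and uses the Frobenius together with the identity $f^p=f$ for $f\in u(\mathfrak h_{F_p})$ (valid because such $f$ take values in $F_p$ on $\Lambda_0$) to get $(h(z)-k)^p=\sum_i h_i(z)\otimes a_i^p$; matching coefficients against $\{a_i^p\}$ then extracts $\Theta\mid h_i(z)$ for each $i$, i.e. $\Theta\mid(h(z)-k)$. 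Finally, the $W\cdot$-invariance of the quotient is obtained by averaging over $W$ using $|W|=n!\not\equiv 0\pmod p$, not by uniqueness in a domain as you propose. Your averaging-free claim that $q=h(z)/\Theta$ is automatically $W\cdot$-invariant because ``$S(\mathfrak h)$ is a domain'' is moot once you can no longer work in $S(\mathfrak h)$: in $u(\mathfrak h)$ the element $\Theta$ is a zero-divisor, so the quotient is not unique and the $W$-averaging step of the paper is genuinely needed. Your closing attempt to replace $u(\mathfrak h)^{W\cdot}$ by $u(\mathfrak h)^{W}$ via a translation automorphism also does not go through as stated, since the translation does not fix $\Theta$; in any case the paper's intended invariants are the dot-invariants (compare Corollary~3.26).
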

\begin{proof} Let $h(z)\in h(\mathcal Z)$.  By Lemma 3.10, there is $k\in F$ such that $(h(z)-k)|_{s_{ij}}=0\  \text{for all}\  i\neq j.$ Then  Corollary 3.12  gives  $\Theta_{ij}|(h(z)-k)$ in $u(\mathfrak h)$ for all $i\neq j$.
 It follows that $\Theta|(h(z)-k)^{2d}$ and hence $\Theta|(h(z)-k)^p$ in $u(\mathfrak h)$. \par

  Assume $(h(z)-k)^p=\Theta q(h)$ for some $q(h)\in u(\mathfrak h)$. Since $(h(z)-k)^p, \Theta\in u(\mathfrak h)^{W\cdot}$, we have $(h(z)-k)^p=\Theta\ (w\cdot q(h))$ for every $w\in W$. Since $p>2d=n(n-1)\geq n$,  $|W|=n!\neq 0$. Let $$ \tilde{q}(h)={1\over |W|}\sum_{w\in W} w\cdot q(h).$$ Then we have $\tilde{q}(h)\in u(\mathfrak h)^{W\cdot}$ and  also $(h(z)-k)^p=\Theta\tilde{q}(h)$. Thus, we may assume that $q(h)\in u(\mathfrak h)^{W\cdot}$.\par
 Let $\{a_i\}_{i\in I}$ be a basis of the $F_p$-vector space $F$ as above. Since $F$ is algebraically closed, the set $\{a_i^p\}_{i\in I}$ is also a basis. In fact, for each $a\in F$, we have $a^{1/p}=\sum_i \l_i a_i$ for $\l_i\in F_p$, so that $a=\sum \l_i a_i^p$. The linear independence of the set $\{a_i^p\}$ is immediate.\par
By the lemma above, we have $h(z)-k=\sum_i h_i(z)\otimes a_i$, where $h_i(z)\in u(\mathfrak h_{F_p})^{W\cdot}$. Then  $$(h(z)-k)^p=\sum_i h_i(z)^p\otimes a_i^p=\sum_i h_i(z)\otimes a_i^p.$$ Assume $q(h)=\sum_i q_i(h)\otimes a_i^p$ with $q_i(h)\in u(\mathfrak h_{F_p})^{W\cdot}$. Then we get $$\sum_i(h_i(z)-\Theta q_i(h))\otimes a_i^p=0.$$ By a similar argument as that used in the proof of above lemma,  we have $h_i(z)=\Theta q_i(h)$ for all $i$, implying that $\Theta|(h(z)-k)$.  Let $h(z)-k=\Theta q'(h)$. By the discussion above we may assume $q'(h)\in  u(\mathfrak h)^{W\cdot}$. This completes the proof.

\end{proof}

\subsection{The image of $h$}
In this subsection, we construct  some central elements in $\bar{\mathfrak u}$.\par
Set $\wedge^{\leq d} (\g_1)=:\sum_{i=0}^d\wedge ^i(\g_1)$.
 From Section 3.2, we obtain an isomorphism $\bar{\mathfrak u}\cong \wedge^{\leq d}(\g_1)\otimes u(\g_{\0})\otimes \wedge (\g_{-1})$ as vector spaces.
Then  $\bar{\mathfrak u}$ has a basis of the form
 $$z^{\d'}x^sy^{\d},\ z^{\d'}\in \wedge^{\leq d} (\g_1),\ x^s\in u(\g_{\0}), \ y^{\d}\in\wedge (\g_{-1}).$$
 Define a projection $(,)_0$: $\bar {\mathfrak u}\longrightarrow u(\g_{\0})$
 by $$(z^{\d'}x^sy^{\d})_0=\begin{cases} x^s, &\text{if $z^{\d'}=y^{\d}=1$}\\
                                             0,   &\text{otherwise.}\end{cases}$$

  Recall the notion $Y$ and $Z$ defined earlier.
\begin{lemma} Assume $p>2d$. Then $(Y\wedge^d(\g_1))_0\neq 0$.
\end{lemma}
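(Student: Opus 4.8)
The plan is to exhibit a single $w\in\wedge^d(\g_1)$ with $(Yw)_0\neq 0$; the natural candidate is $w=Z=\Pi_{i<j}z_{ij}$, which already lies in $\wedge^d(\g_1)$, so it is enough to prove $(YZ)_0\neq 0$.

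I would first transfer the problem to the opposite PBW picture by means of the principal anti-automorphism $\sigma$ of $u(\g)$, determined by $\sigma(x)=-x$ for $x\in\g$. Since $\sigma$ fixes each $\wedge^k(\g_1)$ setwise and carries two-sided ideals to two-sided ideals, it stabilises the ideal $I$ and descends to an anti-automorphism of $\bar{\mathfrak u}$ with $\sigma(u(\g_{\0}))=u(\g_{\0})$. Because $\g_{-1}$ and $\g_1$ are abelian, $\sigma$ sends every monomial $y^{\d}$ to $(-1)^{l(y^{\d})}y^{\d}$ and every $z^{\d'}$ to $(-1)^{l(z^{\d'})}z^{\d'}$; hence $\sigma$ maps $z^{\d'}u(\g_{\0})y^{\d}$ onto $y^{\d}u(\g_{\0})z^{\d'}$, i.e. it interchanges the basis $z^{\d'}x^sy^{\d}$ of Section~3.5 (the one used to define $(\ ,\ )_0$) with the basis $y^{\d}x^sz^{\d'}$ of Section~3.1, preserving the triple of multidegrees. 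It follows that $(\sigma(u))_0^{-}=\sigma\big((u)_0\big)$ for every $u\in\bar{\mathfrak u}$, where $(\ ,\ )_0^{-}$ denotes the projection onto $u(\g_{\0})$ relative to the basis $y^{\d}x^sz^{\d'}$. A short sign computation with the pairwise anticommuting generators gives $\sigma(YZ)=\pm ZY$, whence $(YZ)_0=\pm\,\sigma\big((ZY)_0^{-}\big)$; as $\sigma$ is bijective, it now suffices to prove $(ZY)_0^{-}\neq 0$.

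Next I would evaluate $(ZY)_0^{-}$ on a maximal vector. Fix $\mu\in\Lambda_0$ and let $v_{\mu}$ be the maximal vector of the baby Verma module $Z(\mu)$, which is a $\bar{\mathfrak u}$-module by Lemma~3.2(2). Expanding $ZY$ in the basis $y^{\d}x^sz^{\d'}$ and applying it to $v_{\mu}$: every term with $z^{\d'}\neq 1$ dies since $\g_1 v_{\mu}=0$, and $ZY\in u(\g)_0$, so by the $\mathbb Z$-grading no surviving monomial can have $z^{\d'}=1$ together with $y^{\d}\neq 1$; hence $ZY\cdot v_{\mu}=(ZY)_0^{-}\cdot v_{\mu}$. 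On the other hand, the contraction argument in the proof of Proposition~2.5 uses only that $v_{\mu}$ is a maximal vector of weight $\mu$ on which $\g_{-1}$ acts freely, so it applies verbatim to $v_{\mu}\in Z(\mu)$ and yields $ZY\cdot v_{\mu}=c_0\,\delta_{\mu}\,v_{\mu}$, with $c_0$ a fixed nonzero scalar independent of $\mu$ and $\delta_{\mu}=\Pi_{i<j}(\mu_i-\mu_j+j-i-1)$. Now $\Pi_{i<j}(h_i-h_j+j-i-1)$ is a product of nonzero linear forms, hence a nonzero polynomial, of degree at most $n-1$ in each $h_k$; since $p>2d\ge n-1$ it represents a nonzero element of $u(\mathfrak h)$, so by the contrapositive of Lemma~3.5 it does not vanish identically on $\Lambda_0$, and we may fix $\mu_0\in\Lambda_0$ with $\delta_{\mu_0}\neq 0$. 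Then $(ZY)_0^{-}\cdot v_{\mu_0}=c_0\delta_{\mu_0}v_{\mu_0}\neq 0$, so $(ZY)_0^{-}\neq 0$ and therefore $(YZ)_0=\pm\,\sigma\big((ZY)_0^{-}\big)\neq 0$.

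The step I expect to give the most trouble is the bookkeeping in the second paragraph: one must keep track of the superalgebra signs to confirm both that $\sigma$ genuinely interchanges the two PBW bases of $\bar{\mathfrak u}$ and that this interchange is compatible with the projection onto $u(\g_{\0})$ (the identity $(\sigma(u))_0^{-}=\sigma((u)_0)$), and one must check that the scalar produced by the Proposition~2.5 contraction does not depend on $\mu$. The grading argument for $ZY\in u(\g)_0$ and the non-vanishing of $\Pi_{i<j}(h_i-h_j+j-i-1)$ in $u(\mathfrak h)$ are routine.
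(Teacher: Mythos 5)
Your proof is correct, and the core ingredients are the same as in the paper: the contraction identity $ZY\otimes v_{\mu}=\pm\delta_{\mu}\otimes v_{\mu}$ from the proof of Proposition~2.3, the observation that $\Pi_{i<j}\Theta_{ij}$ has $h_i$-degrees $<p$ so Lemma~3.5 produces a $\mu$ with $\delta_{\mu}\neq 0$, and a $\mathbb{Z}$-grading argument to reduce the action of a degree-zero element to its $u(\g_{\0})$-component. Where you diverge is in taking a detour through the principal anti-automorphism $\sigma$ in order to trade $(YZ)_0$ for $(ZY)_0^{-}$ and then evaluate on the maximal vector $v_{\mu}\in Z(\mu)$ (annihilated by $\g_1$). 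The paper avoids $\sigma$ altogether: it expands $YZ$ in the same basis $z^{\d'}x^{s}y^{\d}$ that defines $(\ ,\ )_0$ and applies it to the vector $Y\otimes v_{\mu}$ in $K(L(\mu))$, which is annihilated by $\g_{-1}$; then every term with $y^{\d}\neq 1$ dies, and since $YZ\in\bar{\mathfrak u}_0$ the surviving terms have $z^{\d'}=1$ as well, giving $YZ\cdot(Yv_{\mu})=(YZ)_0\cdot Yv_{\mu}=\pm\delta_{\mu}Yv_{\mu}\neq 0$ directly. Your $\sigma$-bookkeeping does go through --- the key point being that $\sigma$ carries $z^{\d'}u(\g_{\0})y^{\d}$ onto $y^{\d}u(\g_{\0})z^{\d'}$, so it intertwines the two projections --- but it is an avoidable complication: applying $YZ$ to $Y\otimes v_{\mu}$ rather than $ZY$ to $v_{\mu}$ gives exactly the mirror computation without invoking any anti-automorphism. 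One small remark: you cite Proposition~2.5, which does not exist; the relevant statement is Proposition~2.3, and you are right that its contraction argument does not use simplicity of $M$, so it applies equally well in $Z(\mu)$.
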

\begin{proof} Let $L(\mu)$ be the restricted simple $\g_{\0}$-module of highest weight $\mu\in\Lambda_0$.
 Note that $\delta_{\mu}=(\Pi_{i<j}\Theta_{ij})(\mu)$ for all $\mu\in \Lambda_0$. Since $p>n(n-1)$,  the degree of each $h_i$ in the polynomial $\Pi_{i<j}\Theta_{ij}$ is less that $p$. By Lemma 3.5, there is $\mu\in\Lambda_0$ such that $(\Pi_{i<j}\Theta_{ij})(\mu)\neq 0$.  It then follows from Proposition 2.3 that $K(L({\mu}))$ is  simple as a $u(\g)$-module.\par
  Let $v_{\mu}\in L(\mu)$ be a maximal vector. Write $YZ$ as a linear combination  of the above basis for $\bar {\mathfrak u}$. Then we have in $K(L(\mu))$ that $$(YZ)_0Yv_{\mu}=
YZYv_{\mu}=\delta_{\mu}Yv_{\mu}\neq 0,$$  implying that $(YZ)_0\neq 0$.
\end{proof}
  Since   $\wedge (\g_{-1})$, $\wedge^{\leq d} (\g_1)$ and $u(\g_0)$ are $G$-submodules of $u(\g)$, it is easy to see that $(,)_0$ is a homomorphism of $G$-modules.\par
Let $u^d(\g_{\0})=\langle x_1\cdots x_k\in u(\g_{\0})| x_1,\dots, x_k\in\g_{\0},\  k\leq d\rangle$. Clearly  $u^d(\g_{\0})$ is a $G$-submodule of $u(\g)$.
\par
Define a linear map $\varphi: \wedge^d (\g_1)\longrightarrow u^d(\g_{\0})$ by letting $\varphi(x)=(Yx)_0$ for  $x\in \wedge^d(\g_1)$.

\begin{lemma} $\varphi$ is a homomorphism of $G'$-modules.
\end{lemma}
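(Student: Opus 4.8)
The plan is to unwind the definitions and reduce the claim to the compatibility of the map $\varphi(x)=(Yx)_0$ with the $G'$-action already known to exist on all three of the spaces involved. Recall that $\wedge^d(\g_1)$ and $u^d(\g_{\0})$ were observed just above to be $G$-submodules of $u(\g)$, hence in particular $G'$-modules by restriction; and $Y=\Pi_{i<j}y_{ij}$ spans the one-dimensional subspace $\wedge^d(\g_{-1})$, which is likewise a $G$-submodule of $u(\g)$. The point of passing from $G$ to $G'=SL_n(F)$ is exactly that $Y$ need not be $G$-fixed, but the weight computation $\mathrm{wt}(Y)=\sum_{i<j}(\e_i+\e_j)=(n-1)(\e_1+\cdots+\e_n)$ shows the $G$-action on $FY$ factors through the determinant, so $G'$ does act trivially on $FY$. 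This is the structural fact that makes $\varphi$ a $G'$-map rather than merely a $G$-map.

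First I would record that $(\,,)_0\colon\bar{\mathfrak u}\to u(\g_{\0})$ is a homomorphism of $G$-modules (stated in the excerpt) and hence of $G'$-modules. Next, for $g\in G'$ I would compute $\varphi(Ad\rho(g)x)=(Y\cdot Ad\rho(g)x)_0$. Using $Ad\rho(g)Y=Y$ (since $g\in SL_n$ acts trivially on $FY$ as above) together with the fact that $Ad\rho(g)$ is an algebra automorphism of $u(\g)$, I get $Y\cdot Ad\rho(g)x=(Ad\rho(g)Y)(Ad\rho(g)x)=Ad\rho(g)(Yx)$. Applying the $G$-equivariance of $(\,,)_0$ then yields $(Ad\rho(g)(Yx))_0=Ad\rho(g)\big((Yx)_0\big)$, i.e. $\varphi(Ad\rho(g)x)=Ad\rho(g)\varphi(x)$, which is precisely $G'$-equivariance of $\varphi$. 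One should check along the way that $Yx\in\bar{\mathfrak u}$ lies in the span on which $(\,,)_0$ is being applied, but this is automatic since $x\in\wedge^d(\g_1)$ forces the $\wedge(\g_1)$-part and then the $\wedge(\g_{-1})$-part to have the right lengths for the projection to be nonzero only on the $u(\g_{\0})$-component; Lemma 3.15 guarantees the image is not identically zero, so $\varphi$ is a genuine (nonzero) map of $G'$-modules.

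The only real point requiring care — and the step I expect to be the main obstacle — is justifying $Ad\rho(g)Y=Y$ for $g\in G'$ rigorously, i.e. that the one-dimensional $G$-module $\wedge^d(\g_{-1})=FY$ has character $(n-1)\det$. This follows because $\g_{-1}$ is a $G$-submodule whose $T$-weights are $\{\e_i+\e_j:i<j\}$, so $\wedge^{\mathrm{top}}(\g_{-1})$ has $T$-weight $\sum_{i<j}(\e_i+\e_j)=(n-1)\sum_i\e_i$; a one-dimensional rational $G$-module is determined by its $T$-weight, and $(n-1)\sum_i\e_i$ restricts to $0$ on $SL_n$. With that in hand the rest is the short formal manipulation above. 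I would also remark that the analogous restriction of $(\,,)_0$ and of $\varphi$ gives homomorphisms over $u(\g'_{\0})$ when needed later, paralleling the remark made after the Harish-Chandra homomorphism was introduced.
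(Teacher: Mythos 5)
Your argument is correct and follows the same overall structure as the paper's proof: establish $g\cdot Y=Y$ for $g\in G'$, then pass $g$ through the product $Yx$ using that conjugation by $\rho(g)$ is an algebra automorphism of $\bar{\mathfrak u}$, and finish by the $G$-equivariance of $(\,,)_0$. The one place you genuinely diverge from the paper is in the justification of the key fact $g\cdot Y=Y$. The paper checks it by an explicit computation on the elementary generators $1+ce_{ij}$ ($i\neq j$) of $SL_n(F)$: each such generator sends $y_{st}$ to $y_{st}-cy_{jt}$ (when $i=s$) or fixes it, and the correction term dies in the wedge product, so $(1+ce_{ij})\cdot Y=Y$. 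You instead compute the $T$-weight of $Y$ to be $(n-1)\sum_i\e_i$, identify the one-dimensional rational $GL_n$-module $\wedge^{d}(\g_{-1})=FY$ with $\det^{n-1}$ (using that a character of $GL_n$ is determined by its restriction to $T$), and observe that $\det^{n-1}$ is trivial on $SL_n$. Both are correct; the paper's verification is more elementary and self-contained, while your character-theoretic argument is more conceptual and explains \emph{why} one must pass from $G$ to $G'$ (namely, $\det^{n-1}\neq 1$ on $GL_n$). A minor remark: the parenthetical concern about whether ``$Yx$ lies in the span on which $(\,,)_0$ is being applied'' is a non-issue, since $(\,,)_0$ is defined as a linear projection on all of $\bar{\mathfrak u}$ via the PBW-type basis $z^{\delta'}x^s y^{\delta}$; nothing needs checking there, and the equivariance statement does not require $\varphi$ to be nonzero (though Lemma 3.15 shows that it is).
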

\begin{proof}Recall that  $G$ acts on $\g$ by the composition of the adjoint action with $\rho$. A short computation shows that,  for $c\in F$, $s<t$, $$Ad\rho(1+ce_{ij}) y_{st}=\begin{cases}& y_{st}-cy_{jt}, \  \text{if $i=s$}\\
&y_{st},\ \  \text{otherwise.}\end{cases}  $$ Then we get $(1+ce_{ij})\cdot Y=Y$. Since  $G'=SL_n(F)$ is generated by all $1+ce_{ij}$, $i\neq j$(\cite[Lemma 6.7.1] {ja}), we have $g\cdot Y=Y$ for all $g\in G'$.\par
For any $x\in\wedge^d(\g_1)$ and $g\in G'$,  we then have $$\begin{aligned}\varphi(g\cdot x)&=(Y\rho(g)x\rho(g)^{-1})_0\\
&=(\rho(g)(Yx)\rho(g^{-1}))_0\\&=(g\cdot(Yx))_0\\&=g\cdot(Yx)_0\\&=g\cdot\varphi(x),\end{aligned}$$ as required.
\end{proof}

\subsubsection{A simple $\g_{\0}$-module of typical weight}
In the following we study a simple $\g_{\0}$-module  used later to construct central elements in $\bar{\mathfrak u}$.
 \begin{definition} A weight $\l\in \Lambda_0$  is {\sl typical} if $\Theta(\l)\neq 0$ and {\sl atypical}  otherwise.
\end{definition}
Example. Let $\l=(n-1)\e_1+(n-2)\e_2+\cdots +\e_{n-1}\in \Lambda_0$. If $p>2n-1$, then the weight $\l$ is typical.\par
Note that if $\l$ is typical, then $\delta_{\l}\neq 0$.\par
Denote by $\g_{\mathbb C}$ the Lie superalgebra $p(n)$  over $\mathbb C$. Let $V(\tilde\l)$ be the simple $\g_{\0, \mathbb C}$-module with highest weight $\tilde\l=(n-1)\e_1+(n-2)\e_2+\cdots +\e_{n-1}\in \mathfrak h^*$.

  Since $\tilde\l$ is dominant integral, $V(\tilde\l)$ is finite dimensional (see \cite[Theorem 21.2]{hu1}). Assume $\Phi_0^+=\{\a_1,\dots, \a_d\}$, where $\a_i=\e_i-\e_{i+1}$ for $1\leq i\leq n-1$. Then $\beta=\a_1+\cdots +\a_{n-1}\in\Phi^+_0$
 is the longest root. For each $\a=\e_i-\e_j\in \Phi^+_0$, let $e_{\a}=\tilde e_{ij}, f_{\a}=\tilde e_{ji}$ and $h_{\a}=h_i-h_j$. \par Let $v^+\in V(\tilde\l)$ be the maximal vector.  Then $V(\tilde\l)$ is spanned by vectors of the form $f_{\a_1}^{s_1}\cdots f_{\a_d}^{s_d}v^+$, $s_i\geq 0$.  Let $\mathfrak{sl}^{\beta}_2=\langle e_{\beta}, f_{\beta}, h_{\beta}\rangle \subseteq \g_{\0, \mathbb C}$. By \cite[Theorem 6.3]{hu1}, $V(\tilde\l)$ is completely reducible as a $\mathfrak{sl}^{\beta}_2$-module. Moreover, each simple summand  has  maximal weight $m\geq 0$ and has dimension $m+1$ (\cite[Theorem 7.2]{hu1}).\par  We claim that $\tilde{\l}(h_{\beta})$ is the largest of all these weights. In fact, note that  all  weights (with respect to $\mathfrak h_{\mathbb C}=\langle h_1,\cdots, h_n\rangle$) in $V(\tilde\l)$ are of the form $$\tilde\l-\sum_{i=1}^{n-1} {k_i}\a_i,\  k_i\geq 0.$$  Since  $\a_i=\e_i-\e_{i+1}$ for all $i$ and $h_{\beta}=h_1-h_n$,  we have $\a_i(h_{\beta})=\delta_{i1}+\delta_{i+1,n}\geq 0$. Then the claim follows.\par
Since $\tilde\l (h_{\beta})=n-1$, it follows from \cite[Theorem 7.2]{hu1} that every simple summand of the $\mathfrak{sl}^{\beta}_{2}$-module $V(\tilde\l)$ has dimension $\leq n$. Thus, we obtain that $f_{\beta}^nV(\tilde\l)=0$, and similarly $e_{\beta}^n V(\tilde\l)=0$.
\begin{lemma} Let $\a\in \Phi_0^+$.  Then $e_{\a}^nV(\tilde\l)=f_{\a}^nV(\tilde\l)=0$.
\end{lemma}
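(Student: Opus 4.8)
We already know from the displayed computation preceding the lemma that $f_\beta^n V(\tilde\l)=e_\beta^n V(\tilde\l)=0$ for the longest root $\beta=\a_1+\dots+\a_{n-1}$. The idea is to reduce the case of an arbitrary positive root $\a\in\Phi_0^+$ to the case of $\beta$ by conjugating with a Weyl group element. Concretely, the Weyl group $W=S_n$ acts transitively on roots of the same length, and $\Phi_0$ has a single root length (type $A_{n-1}$), so there is $w\in W$ with $w(\beta)=\a$. Lift $w$ to an automorphism $\tilde w$ of $\g_{\0,\mathbb C}$ as a product of the $\tilde s_{\g}=(\exp\operatorname{ad} e_\g)(\exp\operatorname{ad} f_\g)(\exp\operatorname{ad} e_\g)$, exactly as in the construction recalled in Section 3.4 (over $\mathbb C$ there is no characteristic obstruction, so all exponentials converge on the finite-dimensional module $V(\tilde\l)$). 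Then $\tilde w$ sends the $\mathfrak{sl}_2$-triple $(e_\beta,f_\beta,h_\beta)$ to a triple $(e',f',h')$ spanning the same subalgebra as $(e_\a,f_\a,h_\a)$, up to nonzero scalars.

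First I would record that $e_\a,f_\a$ and $\tilde w(e_\beta),\tilde w(f_\beta)$ differ only by nonzero scalar multiples, because each is a root vector for the root $\a$ (resp. $-\a$), and root spaces in $\g_{\0,\mathbb C}$ are one-dimensional. Hence $f_\a^n$ and $\tilde w(f_\beta)^n=\tilde w(f_\beta^n)$ annihilate the same subspace of $V(\tilde\l)$. Next I would use that $\tilde w$ is an automorphism of $\g_{\0,\mathbb C}$, so $V(\tilde\l)$ twisted by $\tilde w$ is again a finite-dimensional module; but in fact, since $\tilde w$ fixes $\mathfrak h$ up to the action of $w$ and $\tilde\l$ is, in the relevant sense, $W$-related to itself only through the dot action, it is cleaner to argue directly: $\tilde w(f_\beta^n)\cdot V(\tilde\l)=\tilde w\big(f_\beta^n\cdot \tilde w^{-1}V(\tilde\l)\big)$, and $\tilde w^{-1}V(\tilde\l)$ is a $\g_{\0,\mathbb C}$-module that is finite-dimensional, so the bound $f_\beta^n=0$ from the $\mathfrak{sl}_2^\beta$-decomposition applies to it provided every $\mathfrak{sl}_2^\beta$-summand of $\tilde w^{-1}V(\tilde\l)$ still has dimension $\le n$. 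Since $\tilde w^{-1}$ merely permutes weight spaces and $\tilde w^{-1}V(\tilde\l)\cong V(\tilde\l)$ as abstract modules has the same weights, the largest $h_\beta$-eigenvalue is unchanged, so the dimension bound persists, giving $f_\a^n V(\tilde\l)=0$; the argument for $e_\a$ is identical.

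An alternative, and perhaps the slicker route, avoids Weyl conjugation entirely: for $\a=\e_i-\e_j$ with $i<j$, $\mathfrak{sl}_2^\a=\langle e_\a,f_\a,h_\a\rangle$ and by complete reducibility (\cite[Theorem 6.3]{hu1}) $V(\tilde\l)$ splits into $\mathfrak{sl}_2^\a$-summands, each of dimension one more than its top $h_\a$-weight. It then suffices to bound $\tilde\l(h_\a)$ together with the weights below it. Every weight of $V(\tilde\l)$ is $\tilde\l-\sum_k k_i\a_i$ with $k_i\ge 0$, and one computes $\a_k(h_\a)=\a_k(h_i-h_j)=\delta_{ki}-\delta_{k,i-1}+\delta_{k,j-1}-\delta_{kj}$; this is not sign-definite, so the naive argument that worked for $\beta$ fails, which is exactly why the Weyl-conjugation reduction is the right tool. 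So I would commit to the first approach.

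The main obstacle I anticipate is bookkeeping around what $\tilde w$ does to the highest weight and ensuring the $\mathfrak{sl}_2^\beta$-dimension bound genuinely transports: one must be careful that ``$\tilde w^{-1}V(\tilde\l)$'' has the same set of $\mathfrak h$-weights (with multiplicity) as $V(\tilde\l)$ — this is true because $\tilde w^{-1}$ restricts on $\mathfrak h$ to $w^{-1}\in W$ and $W$ permutes the weights of a finite-dimensional module — and hence the maximal $h_\beta$-eigenvalue on $\tilde w^{-1}V(\tilde\l)$ equals $\max_\nu \nu(w^{-1}h_\beta)=\max_\nu (w\nu)(h_\beta)$, which ranges over the same set of values as $\max_\nu \nu(h_\beta)=\tilde\l(h_\beta)=n-1$. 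Once that point is nailed down, the bound $\dim(\text{summand})\le n$ carries over verbatim and the lemma follows. Everything else is the one-line observation that $n$-th powers of proportional nilpotent operators have the same kernel.
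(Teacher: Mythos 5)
Your proposal is correct and takes essentially the same route as the paper: both conjugate the longest-root computation to a general $\alpha\in\Phi_0^+$ by lifting a Weyl element (sending $\beta=\e_1-\e_n$ to $\alpha$) to an automorphism of $\g_{\bar 0,\mathbb C}$ built from the $\tilde\sigma_\gamma$'s, and then use the $\mathfrak{sl}_2$-decomposition bound $\dim\le n$. The only cosmetic difference is packaging: the paper keeps $V(\tilde\l)$ fixed, changes the Borel via the intertwiner $\sigma_{\alpha,V}$, and evaluates the transported highest weight $(\sigma_{\alpha_2}\sigma_{\alpha_1}\tilde\l)(h_\alpha)=n-1$, whereas you twist the module and argue the weight set (hence the top $h_\beta$-eigenvalue) is $W$-stable — in fact the intertwiner already gives $\tilde w(f_\beta^n)V(\tilde\l)=\sigma_{w,V}\bigl(f_\beta^n V(\tilde\l)\bigr)=0$ directly, so your twisted-module detour can be shortened.
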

\begin{proof} Note that $\Phi_0^+=\{\e_i-\e_j| 1\leq i<j\leq n\}$. For each $\a\in \Phi_0^+$, define $$x_{\a}(t)=\text{exp}(tad e_{\a}),\  t\in \mathbb C.$$ Then $x_{\a}(t)$ is an automorphism of $\g_{\0, \mathbb C}$. Set $\tilde \sigma_{\a}=x_{\a}(1)x_{-\a}(-1)x_{\a}(1)$. According to \cite[Lemma 6.4.4]{cr} and \cite[Proposition 6.4.2]{cr}, we have $\tilde \sigma_{\a}(h_{\beta})=h_{\sigma_{\a}(\beta)}$ and $\tilde \sigma_{\a}(e_{\beta})=\pm e_{\sigma_{\a}(\beta)}$ for any $\beta\in \Phi_0$, where $\sigma_{\a}$ is the refection defined with $\a$ (see \cite[Section 9.1]{hu1}).\par
For each $\a\in \Phi_0^+$, we have a triangular decomposition of $\g_{\0, \mathbb C}$ with respect to $\a$: $$\g_{\0, \mathbb C}=\tilde \sigma_{\a}(\mathfrak n_0^+)+\mathfrak h+\tilde \sigma_{\a}(\mathfrak n_0^-).$$ The corresponding positive root system(resp. bases) is $\sigma_{\a}(\Phi_0^+)$ (resp. $\sigma_{\a}(\Delta)$). Since $e_{\a}$ acts  nilpotently on $V(\tilde\l)$, we may define an automorphism of the vector space $V(\tilde\l)$ by (see \cite[Chapter 8]{jj2} or \cite[21.25]{hu1})  $$\sigma_{\a,V}=\text{exp}(e_{\a,V})\text{exp}(-e_{-\a,V})\text{exp}(e_{\a,V}),$$ where $e_{\a,V}$ denotes the endomorphism of $V(\tilde\l)$ given by the action of $e_{\a}$. Then for $x\in\g_{\0}$ and $v\in V(\tilde\l)$, we have $$\sigma_{\a, V}(xv)=\tilde \sigma_{\a}(x) \sigma_{\a,V}(v).$$ This follows from \cite[Lemma 4.5.2]{cr} (for the case $\a$ is simple, see also \cite[Chapter 8]{jj2}).\par
Let $\mathfrak b=\mathfrak h+\mathfrak n_0^+ $.  We claim that $\sigma_{\a,V} (v^+)\in V(\tilde\l)$ is the maximal vector with respect to the Borel subalgebra $\tilde\sigma_{\a}(\mathfrak b)$. In fact, we have $$\tilde\sigma_{\a}(x)\sigma_{\a,V} (v^+)=\sigma_{\a,V}(xv^+)=0$$ for all $x\in \mathfrak n_0^+$ and $$\begin{aligned} h\sigma_{\a,V}(v^+)&=\sigma_{\a,V}(\tilde\sigma_{\a}^{-1}(h)v^+)\\ &=\tilde{\l}(\tilde\sigma_{\a}^{-1}(h))\sigma_{\a,V}(v^+)\\ &=(\sigma_{\a} \tilde\l)(h)\sigma_{\a,V}(v^+),\end{aligned}$$ for $h\in\mathfrak h$. Then the claim follows. In addition, we see that the highest weight of $V(\tilde\l)$ is $\sigma_{\a} \tilde\l$.\par
For each $\a=\e_i-\e_j\in \Phi_0^+$,  let $\a_1=\e_1-\e_i$ and $\a_2=\e_j-\e_n$. We let $\sigma_{\a_2}=1$ if $j=n$ and $\sigma_{\a_1}=1$ if $i=1$. Then we have $\sigma_{\a_2}\sigma_{\a_1}(\beta)=\a$, where $\beta=\e_1-\e_n$ is the longest root. \par By above discussion,  the highest weight of $V(\tilde\l)$ with respect to the Borel subalgebra $\tilde\sigma_{\a_2}\tilde\sigma_{\a_1}(\mathfrak b)$ is $\sigma_{\a_2}\sigma_{\a_1}\tilde\l$. Meanwhile, the longest root in the positive system $\sigma_{\a_2}\sigma_{\a_1}(\Phi_0^+)$ is $\a$. Decompose $V(\tilde\l)$ as a direct sum of simple $\mathfrak{sl}^{\a}_2$-submodules. By a similar argument as above, we obtain that
 the largest weight of these simple summands is $$\begin{aligned}(\sigma_{\a_2}\sigma_{\a_1}\tilde\l)(h_{\a})
 &=(\sigma_{\a_2}\sigma_{\a_1}\tilde\l)(h_{\sigma_{\a_2}\sigma_{\a_1}(\e_1-\e_n)})\\
 &=(\sigma_{\a_2}\sigma_{\a_1}\tilde\l)(\tilde\sigma_{\a_2}\tilde\sigma_{\a_1}(h_{\e_1-\e_n}))\\
 &=\tilde{\l}(h_{\e_1-\e_n})\\&=n-1.\end{aligned}$$
 This implies that $f_{\a}^nV(\tilde\l)=0$ and similarly $e_{\a}^nV(\tilde\l)=0$.
\end{proof}
 Set $d_1=n^2(n-1)$. Let $$U^{d_1}(\g_{\0,\mathbb C})=\langle x_1\cdots x_k\in U(\g_{\0, \mathbb C})| x_1,\dots, x_k\in \g_{\0}, k\leq d_1\rangle.$$ With the assumption $p>d_1$, the notation $u^{d_1}(\g_{\0})$($\subseteq u(\g)$) can be defined similarly.

\begin{lemma} Let $\vartheta: U(\g_{\0, \mathbb C})\longrightarrow \text{End}_{\mathbb C} V(\tilde\l)$ be the representation afforded by the $\g_{\0, \mathbb C}$-module $V(\tilde\l)$. Then $\vartheta (U^{d_1}(\g_{\0, \mathbb C}))=\text{End}_{\mathbb C} V(\tilde\l)$.
\end{lemma}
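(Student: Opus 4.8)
The plan is to show that the image $\vartheta(U^{d_1}(\g_{\0,\mathbb C}))$ is all of $\text{End}_{\mathbb C}V(\tilde\l)$ by combining two ingredients: first, that $V(\tilde\l)$ is a simple $\g_{\0,\mathbb C}$-module, so by the Jacobson density theorem (or just finite-dimensional simplicity over $\mathbb C$) the full enveloping algebra $U(\g_{\0,\mathbb C})$ already surjects onto $\text{End}_{\mathbb C}V(\tilde\l)$; and second, a degree bound showing that the truncation at degree $d_1=n^2(n-1)$ suffices. Thus the real content is the degree estimate, and everything hinges on controlling how large a product of elements of $\g_{\0}$ one needs in order to span $\text{End}_{\mathbb C}V(\tilde\l)$.

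For the degree bound I would argue as follows. By Lemma 3.19, every positive and negative root vector $e_\a$, $f_\a$ ($\a\in\Phi_0^+$) satisfies $e_\a^nV(\tilde\l)=f_\a^nV(\tilde\l)=0$, i.e. each root vector acts on $V(\tilde\l)$ as a nilpotent operator of nilpotency index at most $n$. Write $\dim V(\tilde\l)=N$. A PBW-type argument using the fixed order $f_{\a_1},\dots,f_{\a_d},h_1,\dots,h_n,e_{\b_1},\dots,e_{\b_m}$ shows that $V(\tilde\l)$ is spanned by the vectors $f_{\a_1}^{s_1}\cdots f_{\a_d}^{s_d}v^+$ with each $0\le s_i\le n-1$ (using $f_\a^nV(\tilde\l)=0$ and the highest-weight condition), so each basis vector of $V(\tilde\l)$ is obtained from $v^+$ by a product of at most $d(n-1)$ root vectors. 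Dually, any functional on $V(\tilde\l)$ annihilating all but one of these basis vectors is realized by a product of at most $d(n-1)$ raising operators $e_\a$ (since $e_\a^nV(\tilde\l)=0$ lets us truncate, and the lowering operators likewise). Composing, each rank-one operator $v_i\mapsto v_j$ is expressed as a monomial in root vectors of length at most $2d(n-1)+(\text{a bounded number of }h\text{'s})$, and since $d=\tfrac12 n(n-1)$ one checks $2d(n-1)\le n^2(n-1)=d_1$ with room to spare for the torus contribution. As the rank-one operators span $\text{End}_{\mathbb C}V(\tilde\l)$, this gives $\vartheta(U^{d_1}(\g_{\0,\mathbb C}))=\text{End}_{\mathbb C}V(\tilde\l)$.

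More carefully, I expect to organize it as: (i) recall $V(\tilde\l)$ is simple, so by density $\vartheta(U(\g_{\0,\mathbb C}))=\text{End}_{\mathbb C}V(\tilde\l)$; (ii) produce an explicit spanning set of $\text{End}_{\mathbb C}V(\tilde\l)$ consisting of products of root vectors, where each product has bounded length; (iii) check the length bound is at most $d_1$. For step (ii) the cleanest route is to choose a weight basis $\{v_\mu\}$ of $V(\tilde\l)$ and, for each ordered pair $(\mu,\nu)$, build the elementary operator $E_{\mu\nu}$ as (lowering word from $v^+$ down to $v_\mu$)$\circ$(a projection onto the $v^+$-line realized inside $U(\g_{\0,\mathbb C})$)$\circ$(raising word from $v_\nu$ up to $v^+$); the projection onto the highest line is itself a polynomial in the $h_i$ of degree $<N$, but one can avoid even that by noting that raising from any weight vector eventually lands in the (one-dimensional) top weight space and then lowering reaches $v_\mu$, the composite word having length bounded by the number of root-vector applications needed, each coordinate bounded by $n-1$ thanks to Lemma 3.19.

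The main obstacle will be pinning down the length bound honestly — i.e. verifying that the worst-case monomial needed really has degree $\le d_1=n^2(n-1)$ and not something slightly larger, because a naive count (raising word of length $\le d(n-1)$ plus lowering word of length $\le d(n-1)$ plus a torus polynomial of degree up to $N-1$) could overshoot through the torus term. The fix is to eliminate the torus polynomial: since raising $v_\nu$ repeatedly lands in the top weight space, which is one-dimensional and on which any nonzero raising word acts by a nonzero scalar, one gets the rank-one operator $v_\nu\mapsto v^+$ directly (up to scalar) as a pure raising monomial of degree $\le d(n-1)$, and similarly $v^+\mapsto v_\mu$ as a pure lowering monomial of degree $\le d(n-1)$; their product has degree $\le 2d(n-1)=n(n-1)^2<n^2(n-1)=d_1$. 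So the bound is comfortably met, and the only delicate point is justifying the "nonzero scalar" claim, which follows from simplicity of $V(\tilde\l)$ together with $\tilde\l$ being dominant integral.
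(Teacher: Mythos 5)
Your overall structure---Jacobson density to get $\vartheta(U(\g_{\0,\mathbb C}))=\text{End}_{\mathbb C}V(\tilde\l)$, then a degree estimate to push the image down into $U^{d_1}$---matches the paper's. The gap is in the degree estimate. You propose to realize each rank-one operator $E_{\mu\nu}$ as a composite (lowering word from $v^+$ to $v_\mu$)$\circ$(raising word from $v_\nu$ to $v^+$), claiming in the final paragraph that a pure raising monomial already gives the rank-one map $v_\nu\mapsto v^+$. That is false: a monomial $e_{\a_1}^{s_1}\cdots e_{\a_d}^{s_d}$ does not annihilate the other basis vectors. It sends any weight vector whose weight shifted by $\sum s_i\a_i$ is still a weight of $V(\tilde\l)$ to a generally nonzero vector, and when the $\nu$-weight space has dimension $>1$ it certainly does not single out $v_\nu$. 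Your earlier step (ii), which introduced a projection onto the $v^+$-line, was the honest version; the ``fix'' deletes exactly the ingredient that was doing the work, and the resulting composites need not span $\text{End}_{\mathbb C}V(\tilde\l)$. (Also note the nilpotency $e_\a^nV(\tilde\l)=f_\a^nV(\tilde\l)=0$ is Lemma 3.18, not 3.19.)

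The paper sidesteps rank-one operators entirely: since $\vartheta$ is an algebra homomorphism and $\vartheta(U(\g_{\0,\mathbb C}))=\text{End}_{\mathbb C}V(\tilde\l)$, the images of PBW monomials $e^sf^th^k$ span, and one truncates exponents using operator relations in $\text{End}_{\mathbb C}V(\tilde\l)$. Lemma 3.18 forces $s_i,t_i\leq n-1$; and each $\vartheta(h_i)$ satisfies a polynomial of degree $\leq n$ (the weights of $V(\tilde\l)$ have all coordinates in $\{0,\dots,n-1\}$, since they lie in the convex hull of the $W$-orbit of $(n-1,\dots,1,0)$), so $k_i\leq n-1$ as well. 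The resulting spanning set sits in $U^{d_1}(\g_{\0,\mathbb C})$, with total degree $2d(n-1)+n(n-1)=n^2(n-1)=d_1$---so the Cartan contribution is not slack you can drop; it is precisely what saturates the budget. If you want to fix your argument, abandon the explicit rank-one construction and use the PBW truncation, keeping the Cartan factor and bounding its degree by the minimal-polynomial observation above.
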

\begin{proof} Set $\text{ann}(V(\tilde\l))=\{u\in U(\g_{\0, \mathbb C})|u V(\tilde\l)=0\}$. Then $\text{ann} V(\tilde\l)$ is an ideal of $U(\g_{\0})$.   Moreover, $\bar U(\g_{\0, \mathbb C})=U(\g_{\0, \mathbb C})/\text{ann} (V(\tilde\l))$ is primitive and $V(\tilde\l)$ is a faithful $\bar U(\g_{\0, \mathbb C})$-module. By \cite[Theorem 1.12, p. 420]{hunw}, $\vartheta(\bar U(\g_{\0,\mathbb C}))$ is dense in $\text{End}_{\mathbb C}V(\tilde\l)$, implying that  $\vartheta(U(\g_{\0, \mathbb C}))=\text{End}_{\mathbb C} V(\tilde\l)$.\par
From the proof of above lemma, we see that, for $\a\in\Phi_0^+$, each simple $\mathfrak{sl}^{\a}_2$-submodule   of $V(\tilde\l)$ has dimension $\leq n$. Then  $\vartheta(h_{\a})$ satisfying either $$(\vartheta(h_{\a})-(n-1))(\vartheta(h_{\a})-(n-3))\cdots (\vartheta(h_{\a})+(n-1))=0$$ or $$(\vartheta(h_{\a})-(n-2))(\vartheta(h_{\a})-(n-4))\cdots (\vartheta(h_{\a})+(n-2))=0$$ or both. Hence $\vartheta(U(\mathfrak h))$ is spanned by the images of $h_1^{k_1}\cdots h_n^{k_n}, 0\leq k_i\leq n-1$.
It follows that $\text{End}_{\mathbb C} V(\tilde\l)$ is spanned by the images under $\vartheta$ of the  elements
$$e_{\a_1}^{s_1}\cdots e_{\a_d}^{s_d}f_{\a_1}^{t_1}\cdots f_{\a_d}^{t_d}h_1^{k_1}\cdots h_n^{k_n},\  0\leq s_i,t_i,k_i\leq n-1.$$  All these elements are contained in $U^{d_1}(\g_{\0})$, then the lemma follows.
\end{proof}

  Let $\g_{\0, \mathbb Z}$ be the $\mathbb Z$-span of the  basis vectors $\tilde{e}_{ij}$ of $\g_{\0, \mathbb C}$.   Let $\mathcal U_{\mathbb Z}$ and $\mathcal U'_{\mathbb Z}$ denote respectively the Kostant $\mathbb Z$-form of $U(\g_{\0, \mathbb C})$ and $U(\g'_{\0, \mathbb C})$ (see \cite[26]{hu1}). Let $\hbar=\sum_{i=1}^n h_i\in\g_{\0, \mathbb Z}$. Then $[\hbar,\ \g_{\0,\mathbb C}]=0$. Let $\mathcal U(\hbar)_{\mathbb Z}$ be the $\mathbb Z$-submodule of $\mathcal U_{\mathbb Z}$ generated by all $\binom{\hbar}{a}, a\geq 0$. Then we have $\mathcal U_{\mathbb Z}=\mathcal U'_{\mathbb Z}\otimes_{\mathbb Z}\mathcal U(\hbar)_{\mathbb Z}$.   \par

  Let  ${\bar V}(\l)=\mathcal U_{\mathbb Z}v^+\otimes F$. Then ${\bar V}(\l)$
   is a rational $G$-module (hence restricted $\g_{\0}$-module). Since $p>n$,  the maximal vector $v^+\otimes 1\in {\bar V}(\l)$ has the restricted weight $\l=\tilde\l\otimes 1\in \Lambda_0$.  Clearly we have $\bar V(\l)=\mathcal U'_{\mathbb Z}v^+\otimes F$. By \cite[Proposition 1.2]{hu2}, the $\g_{\0}'$-module $\bar V(\l)$ has a unique maximal submodule. It follows that $\bar V(\l)$, as a $\g_{\0}$-module, also has a unique maximal submodule. Therefore, $\bar V(\l)$ has a unique quotient $L(\l)$ simple as a restricted $\g_{\0}$-module (hence as a $G$-module).\par

\begin{theorem} Let $\vartheta$ be the representation of $u(\g_{\0})$ afforded by $\g_{\0}$-module $L(\l)$.  Then  $\vartheta(u^{d_1}(\g_{\0}))=\text{End}_F L(\l)$.
\end{theorem}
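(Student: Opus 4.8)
The statement is a characteristic-$p$ version of the density theorem of Lemma 3.21: I want to transfer the conclusion $\vartheta(U^{d_1}(\g_{\0,\mathbb C})) = \text{End}_{\mathbb C}V(\tilde\l)$ down to the restricted simple module $L(\l)$ over $F$. The key observation is that $L(\l)$ is a quotient of $\bar V(\l) = \mathcal U_{\mathbb Z}v^+\otimes F$, and that the full structure over $\mathbb C$ was realized on the Kostant $\mathbb Z$-form lattice $\mathcal U_{\mathbb Z}v^+$. So I would first record the explicit spanning set produced in the proof of Lemma 3.21: the images under $\vartheta$ of the monomials
$$e_{\a_1}^{s_1}\cdots e_{\a_d}^{s_d}f_{\a_1}^{t_1}\cdots f_{\a_d}^{t_d}h_1^{k_1}\cdots h_n^{k_n},\quad 0\leq s_i,t_i,k_i\leq n-1,$$
span $\text{End}_{\mathbb C}V(\tilde\l)$. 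These lie in $U^{d_1}(\g_{\0,\mathbb C})$ since the total degree is at most $d\cdot(n-1)+d\cdot(n-1)+n(n-1) = n(n-1)^2 + n(n-1) = n^2(n-1) = d_1$, using $|\Phi_0^+| = d$.

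**Reduction to divided powers over $\mathbb Z$.** Since $0\leq s_i,t_i,k_i\leq n-1 < p$, each ordinary power $e_\a^{s}$ equals $s!$ times the divided power $e_\a^{(s)}\in\mathcal U_{\mathbb Z}$, and $s!$ is invertible mod $p$ because $p>n>n-1\geq s$; similarly for $f_\a^t$ and for $h_i^{k_i}$, which is an integer combination (with denominators $\leq (n-1)!$, again invertible) of the binomials $\binom{h_i}{j}$, $j\leq n-1$. Hence the $\mathbb C$-spanning monomials above lie, up to invertible scalars mod $p$, in the Kostant $\mathbb Z$-form $\mathcal U_{\mathbb Z}$; more precisely each of them, reduced mod $p$, is a scalar multiple of an element of $\mathcal U_{\mathbb Z}\otimes F$ acting on $\bar V(\l)$. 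Next I would argue that the $\mathbb C$-linear-independence of these endomorphisms of $V(\tilde\l)$ survives reduction. The clean way: let $P$ be the set of the (finitely many) monomials above; the matrix expressing the corresponding operators on $V(\tilde\l)$ in a fixed $\mathbb Z$-basis of the lattice $\mathcal U_{\mathbb Z}v^+$ has entries in $\mathbb Z$, and over $\mathbb C$ this collection spans $\text{End}_{\mathbb C}V(\tilde\l)$, i.e. the matrix has full rank $(\dim V)^2$; after possibly discarding redundant monomials I may assume $|P| = (\dim V)^2$ and the determinant of the resulting square matrix is a nonzero integer.

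**The final argument and the obstacle.** The one remaining gap is that $\bar V(\l)$ need not be irreducible; I only control $L(\l) = \bar V(\l)/\text{rad}$. But $\dim_F \bar V(\l) = \dim_{\mathbb C}V(\tilde\l)$ by the $\mathbb Z$-form construction, so the integer determinant above, viewed in $F$, being nonzero would give that the operators $\{u\otimes 1 : u\in P\}$ span $\text{End}_F\bar V(\l)$, and then their images span $\text{End}_F L(\l)$ since the map $\text{End}_F\bar V(\l)\to\text{End}_F L(\l)$ restricted to operators preserving $\text{rad}$ is not surjective in general — so I cannot route through $\bar V(\l)$ directly. \textbf{The honest route, and the main obstacle, is the following.} Rather than chasing determinants, I would invoke that $L(\l)$ is a restricted simple $\g_{\0}$-module, hence a simple $u(\g_{\0})$-module, hence a simple $u_0(\g_{\0})$-module with trivial $p$-character; by the Jacobson density theorem for the finite-dimensional algebra $u_0(\g_{\0})$ acting on the simple faithful-on-its-image module $L(\l)$, the image of $u(\g_{\0})$ is all of $\text{End}_F L(\l)$. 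So the content to be proved is precisely that $u^{d_1}(\g_{\0})$ already surjects, i.e. that no filtration degree beyond $d_1$ is needed. For that I use the $\mathbb C$-model: the reductions mod $p$ of the $|P|$ divided-power monomials act on $L(\l)$; I must show they remain linearly independent there. Here I use Lemma 3.22's analogue — that $\vartheta(h_\a)$ on $L(\l)$ still satisfies one of the two degree-$\leq n$ polynomial identities displayed there (this follows because $\bar V(\l)$, and hence $L(\l)$, decomposes over each $\mathfrak{sl}_2^\a$ into modules of dimension $\leq n$, by reduction mod $p$ of the $\mathbb C$-statement and $p>n$), so that $\vartheta(u(\mathfrak h))$ is spanned by images of $h_1^{k_1}\cdots h_n^{k_n}$ with $k_i\leq n-1$, and similarly $e_\a^{(s)}, f_\a^{(t)}$ act as zero for $s,t\geq n$ on $L(\l)$ by Lemma 3.20's reduction. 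Consequently $\vartheta(u(\g_{\0}))$ is already spanned by $\vartheta$ of the monomials in $P$, all of which lie in $u^{d_1}(\g_{\0})$; combined with $\vartheta(u(\g_{\0})) = \text{End}_F L(\l)$ from density, this yields $\vartheta(u^{d_1}(\g_{\0})) = \text{End}_F L(\l)$. The main obstacle is thus purely bookkeeping: verifying that the PBW/divided-power reduction arguments of Lemmas 3.20–3.22 go through verbatim over $F$ for the possibly-non-irreducible $\bar V(\l)$ and descend to $L(\l)$, which they do because every identity used is an identity of operators that is preserved under reduction mod $p$ and under passing to quotients.
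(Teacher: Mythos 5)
Your final argument (after you abandon the determinant digression) is precisely the paper's own proof: you invoke Jacobson density for the simple $u(\g_{\0})$-module $L(\l)$ to get $\vartheta(u(\g_{\0}))=\text{End}_F L(\l)$, then use the $\mathbb Z$-form to transport $e_{\a}^n = f_{\a}^n = 0$ and the degree-$\leq 2n-1$ polynomial identity for $\vartheta(h_{\a})$ from $V(\tilde\l)$ down to $\bar V(\l)$ and hence to $L(\l)$, concluding that the low-degree monomials in $u^{d_1}(\g_{\0})$ already span the image. One slip worth noting: the sentence ``I must show they remain linearly independent there'' is wrong and unnecessary --- the monomials in $P$ cannot stay linearly independent once $\dim L(\l) < \dim V(\tilde\l)$; what you need (and what you in fact prove in the very next sentence) is only that their $\vartheta$-images \emph{span} $\text{End}_F L(\l)$.
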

\begin{proof} By Lemma 3.18 we have $e_{\a}^n\bar V(\l)=f_{\a}^n\bar V(\l)=0$
 and hence $e_{\a}^nL(\l)=f_{\a}^nL(\l)=0$ for all $\a\in \Phi_0^+$. From the proof of Lemma 3.19,
  we see that $L(\l)$ is annihilated by $$\Pi_{k=-(n-1)}^{n-1}(\vartheta(h_{\a})-k)$$ for each $\a\in \Phi_0^+$.
 Then the theorem follows from a similar argument as that used in the proof of Lemma 3.19.
\end{proof}

\subsubsection{An $\text{ad}_{\g'_{\0}}$-submodule of $u^{d_1}(\g)$}
In the following, we continue making the preparation  for constructing central elements in $\bar{\mathfrak u}$.\par
 Let $U(\g_Q)$ be the universal enveloping superalgebra of the Lie superalgebra $p(n)$  over $Q$. Set $$V_Q=:U^{d_1}(\g_{\0, Q})+\wedge^d(\g_{1, Q}).$$ Then $V_Q$ is an $\text{ad}_{\g'_{\0, Q}}$-submodule of $U^{d_1}(\g_Q)$. Since $\g_{\0, Q}'\cong \mathfrak{sl}(n, Q)$ is semisimple,   we have by \cite[Theorem 6.3]{hu1} that $V_Q=V_1\oplus V_2\oplus \cdots \oplus V_s$, where each $V_i$ is a simple $\g'_{\0, Q}$-module.

\begin{lemma} Each  $V_i$ is generated by a maximal vector $v_{\l}\in V_i$ of dominant integral weight $\l$.
\end{lemma}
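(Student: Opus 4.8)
At bottom this is the classical assertion that a finite-dimensional simple module over the semisimple Lie algebra $\g'_{\0,Q}\cong\mathfrak{sl}(n,Q)$ (over the characteristic-zero field $Q$) is a highest weight module whose highest weight is dominant integral and which is generated by its maximal vector; the only work is to set things up so that the statement is literally of this shape. First I would record that $V_Q$ is finite dimensional: $U^{d_1}(\g_{\0,Q})$ is spanned by the products of at most $d_1$ of the finitely many basis vectors $\tilde e_{ij}$ of $\g_{\0,Q}$, and $\wedge^d(\g_{1,Q})$ is finite dimensional as well. Hence each summand $V_i$ is a finite-dimensional simple $\g'_{\0,Q}$-module, and the structure theory of \cite{hu1} applies to it.

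Before invoking that theory I would refine the decomposition $V_Q=V_1\oplus\cdots\oplus V_s$ so that it is compatible with the central element $\hbar=\sum_{i=1}^n h_i$ of $\g_{\0,Q}$. Since $\hbar$ is central in $\g_{\0,Q}$, the operator $\text{ad}\,\hbar$ commutes with $\text{ad}\,\g'_{\0,Q}$; it acts by $0$ on $U^{d_1}(\g_{\0,Q})$ and by the nonzero scalar $-2d$ on $\wedge^d(\g_{1,Q})$, so these two $\g'_{\0,Q}$-submodules are precisely the two $\text{ad}\,\hbar$-eigenspaces of $V_Q$. Decomposing each of them separately into $\g'_{\0,Q}$-simples, I may assume that each $V_i$ lies in a single $\text{ad}\,\hbar$-eigenspace; then $V_i$ is stable under all of $\g_{\0,Q}$ with $\hbar$ acting as a scalar, and consequently the $\h$-weights of $V_i$ are well-defined elements of $\h^*$.

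Now I would apply the highest weight theory (\cite[\S 20]{hu1}) to the finite-dimensional simple $\g'_{\0,Q}$-module $V_i$. Let $\mathfrak b$ be the Borel subalgebra of $\g'_{\0,Q}$ spanned by $\mathfrak n_0^+$ together with the trace-zero diagonal matrices; since $[\mathfrak b,\mathfrak b]=\mathfrak n_0^+$, Lie's theorem produces a common eigenvector $v_{\l}\in V_i$ for $\mathfrak b$ with $\mathfrak n_0^+ v_{\l}=0$, i.e. a maximal vector of some weight $\l\in\h^*$. Simplicity of $V_i$ forces $U(\g'_{\0,Q})v_{\l}=V_i$, so $v_{\l}$ generates $V_i$. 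Dominance and integrality of $\l$ come out as usual from $\mathfrak{sl}_2$-theory: for each $\a\in\Phi_0^+$ the vector $v_{\l}$ is a highest weight vector for the action of $\mathfrak{sl}_2^{\a}=\langle e_{\a},f_{\a},h_{\a}\rangle$ on the finite-dimensional module $V_i$, whence $\l(h_{\a})\in\bbz_{\geq 0}$ by \cite[Theorem 7.2]{hu1}; integrality is in any case visible directly, since every $\h$-weight occurring in $U(\g_Q)$ under the adjoint action lies in $\bbz\e_1+\cdots+\bbz\e_n$ and $\wedge^d(\g_{1,Q})\subseteq U(\g_Q)$ inherits this.

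I do not anticipate a genuine obstacle here; the only points requiring attention are the two bookkeeping steps above — verifying that $V_Q$ is finite dimensional, and making the harmless refinement of its decomposition so that each $V_i$ carries a well-defined $\h$-weight — after which ``dominant integral weight'' has its usual meaning and the lemma is an instance of \cite[Theorem 21.2]{hu1} together with \cite[\S 20]{hu1}.
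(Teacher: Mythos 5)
You take a genuinely different route from the paper. You argue directly over $Q$ (which, from the surrounding $\mathbb{Z}$-form discussion, is the rational number field $\mathbb{Q}$), whereas the paper extends scalars to $\mathbb{C}$, applies the Humphreys highest-weight classification there, and then descends back to $\mathbb{Q}$: it writes a $\mathbb{C}$-maximal vector of $V_i\otimes_Q\mathbb{C}$ as $\sum_j\xi_j v_j$ with $\{\xi_j\}$ a $\mathbb{Q}$-basis of $\mathbb{C}$ and $v_j\in V_i$, and tests against elements of $\text{Hom}_Q(V_i,Q)$ to show each $v_j$ is already a maximal vector in $V_i$.

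As written, your proof has a gap at exactly the step the paper's $\mathbb{C}$-extension is designed to handle. Lie's theorem requires an algebraically closed base field, and $\mathbb{Q}$ is not algebraically closed; over $\mathbb{Q}$ a solvable Lie algebra acting on a finite-dimensional space need not have a common eigenvector at all, since the eigenvalues may fail to be rational. For the same reason one cannot simply cite Humphreys \cite[\S\S 20--21]{hu1}, which is stated over an algebraically closed field. So ``Lie's theorem produces a common eigenvector $v_{\lambda}\in V_i$'' does not follow as stated.

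The missing ingredient is in fact already present in your proposal, only misplaced: you observe, as a backup remark about integrality of $\lambda$, that every $\mathfrak{h}$-weight occurring in $U(\mathfrak{g}_Q)$ under the adjoint action lies in $\mathbb{Z}\epsilon_1+\cdots+\mathbb{Z}\epsilon_n$. Promoted to the key step, this is exactly what rescues the argument: $V_Q\subseteq U(\mathfrak{g}_Q)$ has an $\mathfrak{h}_1$-weight space decomposition defined over $\mathbb{Q}$, hence so does the $\mathfrak{g}'_{\bar 0,Q}$-submodule $V_i$; picking a weight $\lambda$ of $V_i$ that is maximal with respect to $\Phi_0^+$ and any nonzero $v_\lambda$ in that weight space yields a maximal vector, with no appeal to Lie's theorem. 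With that substitution your argument is correct and arguably more direct than the paper's descent. Your preliminary refinement of the decomposition by $\text{ad}\,\hbar$-eigenspaces is a harmless bookkeeping choice (and is essentially automatic, since $\hbar$ is a $\mathfrak{g}'_{\bar 0,Q}$-endomorphism of $V_Q$ with only the two rational eigenvalues $0$ and $2d$), which the paper does not make explicit because it only uses $\mathfrak{h}_1$-weights.
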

\begin{proof} Since $\mathbb C$ is algebraically closed and since $\g'_{\0}$ is semisimple,  $V_i\otimes_Q\mathbb C$ is a direct sum of simple $\g'_{\0, \mathbb C}$-modules each of the form $V(\l)$  with $\l$ dominant integral (see \cite[21.2]{hu1}).\par  Let $V(\l)$ be one of the summands,  let $v_{\l}\in V(\l)$ be the maximal vector and let $\{\xi_i| i\in I\}$ be a basis of $\mathbb C$ over $Q$. Write $v_{\l}=\sum_{j=1}^t \xi_jv_j$,  $v_j\in V_i$.\par
If $t=1$, we let $v_1=v_{\l}\in V_i$. Then $U(\g'_{\0, Q})v_{\l}\subseteq V_i$ is a $\g'_{\0, Q}$-submodule of $V_i$, forcing $V_i=U(\g'_{\0, Q})v_{\l}$, as required.\par
If $t>1$, then for all $\a\in \Phi_0^+$, we have $e_{\a}v_{\l}=0$ and hence $\sum^t_{j=1}\xi_j(e_{\a}v_j)=0$.  Applying every $$f\in \text{Hom}_Q(V_i,Q)\subseteq \text{Hom}_{\mathbb C}(V_i\otimes_Q \mathbb C, \mathbb C)$$ to this equation,  we have $\sum_{j=1}^t \xi_jf(e_{\a}v_j)=0$, and hence $f(e_{\a}v_j)=0$ for each $j$. Therefore, for each $j$, we have  $e_{\a}v_j=0$ for all $\a\in\Phi_0^+$. Similarly,  we obtain $h_kv_j=\l(h_k)v_j$ for all $k=1,\dots, n$. It follows that each $v_j$ is a maximal vector of weight $\l$. So that $U(\g'_{\0, Q})v_j\subseteq V_i$ is a $\g'_{\0, \mathbb C}$-submodule, implying that $V_i=U(\g'_{\0, Q})v_j$, as required.
\end{proof}
Fix a PBW basis of $U(\g_{\mathbb C})$ consisting of monomials of the natural basis of $\g_{\mathbb C}$.
Let $U(\g_{\mathbb Z})$ denote the $\mathbb Z$-span of this basis, and let $V_{\mathbb Z}$ be the $\mathbb Z$-span of the basis vectors contained in $V_Q$.\par
  By the lemma above, each $V_i$ may be written as $V(\l_i)$ as in \cite[21.1]{hu1}. In addition, we may assume  $v_{\l_i}=\sum_{i=1}^r c_iv_i\in V_{\mathbb Z}$ with each $v_i$ a PBW basis vector,  each $c_i\in\mathbb Z$  and  $(c_1,c_2,\dots, c_r)=1$.  Then $\mathcal U'_{\mathbb Z}v_{\l_i}$ is an admissible lattice in
$V(\l_i)$. According to the proof of \cite[Theorem 27.1]{hu1}, $\mathcal U'_{\mathbb Z}v_{\l_i}$ is a free $\mathbb Z$-module with $\mathbb Z$-rank   $\text{dim}V(\l_i)$. Moreover, we have $\mathcal U'_{\mathbb Z}v_{\l_i}\subseteq V_{\mathbb Z}$  by \cite[Corollary 26.3]{hu1}.\par
 Set $\bar V(\l_i)=\mathcal U'_{\mathbb Z}v_{\l_i}\otimes F$. This is a rational $G'$-module and hence a restricted $\g'_{\0}$-module (see \cite{hu2}). Then the inclusion $\mathcal U'_{\mathbb Z}v_{\l_i}\subseteq V_{\mathbb Z}$ induces a $G'$-module homomorphism from $\bar V(\l_i)$ to $V=:V_{\mathbb Z}\otimes F$.\par
 Remark:  Since $p>2$, we have $U(\g_{\mathbb Z})\otimes F\cong U(\g)$, which gives $V\cong U^{d_1}(\g_{\0})\oplus \wedge ^d(\g_1)$.\par
 Now that $\mathcal U'_{\mathbb Z}v_{\l_1}\oplus \cdots \oplus\mathcal U'_{\mathbb Z}v_{\l_s}\subseteq V_{\mathbb Z}$ is a free $\mathbb Z$-module of $\mathbb Z$-rank $$\sum_{i=1}^s\text{dim} V(\l_i)=\text{dim} V_Q.$$
 For each $V(\l_i)$,  there is a contravariant bilinear form whose determinant on $\mathcal U'_{\mathbb Z}v_{\l_i}$ is an integer $D_{\l_i}$ (\cite{jj2}). The  induced  bilinear form on $\bar V(\l_i)$ has as the radical the unique maximal $G'$-submodule.\par  Assume $(C1)$: $p\nmid  \Pi^s_{i=1} D_{\l_i}$. Then the contravariant  form on each $\bar V(\l_i)$ is nondegenerate, and hence each $\bar V(\l_i)$ is a simple $\g_{\0}'$-module (and a simple $G'$-module).
 Therefore, we obtain a completely reducible $G'$-submodule  $\bar V(\l_1)+ \cdots + \bar V(\l_s)$ of $V$.\par
 Let $v_1,\dots, v_l$ denote the fixed PBW basis of $V_{\mathbb Z}$. Then we have $$(*)\\ \hskip 0.5truept\ (v_{\l_1}, \dots, v_{\l_s})=(v_1,\dots, v_l)A_{l\times s},$$ where $A_{l\times s}$ is an integral matrix. For each $v\in V_{\mathbb Z}$, denote $v\otimes 1\in V$ by $\bar v$. Denote the image of $A_{l\times s}$ under the canonical map  $\mathbb Z^{r\times s}\longrightarrow F_p^{r\times s}$ by $\bar A_{l\times s}$.
 It follows that   $$(\bar v_{\l_1}, \dots, \bar v_{\l_s})=(\bar v_1,\dots, \bar v_l)\bar A_{l\times s}.$$
 Since $v_{\l_1}, \dots, v_{\l_s}$ are linearly independent,   there is a  submatrix $A'_{s\times s}$ in $A_{l\times s}$ with $\text{det} A'=m\neq 0$. Assume $(C1'): p>|m|$. Then we have $r(\bar A_{l\times s})=s$, and hence the elements $\bar v_{\l_1}, \dots, \bar v_{\l_s}\in V$ are linearly independent.\par
\begin{corollary} With  assumptions $(C1)$ and $(C1')$, we have
 $$\bar V(\l_1)\oplus \cdots \oplus \bar V(\l_s)=V.$$
\end{corollary}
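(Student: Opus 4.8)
The goal is to upgrade the inclusion $\bar V(\l_1)\oplus\cdots\oplus\bar V(\l_s)\subseteq V$ to an equality. Since everything is a finite-dimensional $F$-vector space, it suffices to check that the dimensions match, i.e. that $\sum_{i=1}^s\dim\bar V(\l_i)=\dim V$. The right-hand side equals $\dim_Q V_Q$ by the Remark (the PBW basis of $V_{\mathbb Z}$ reduces mod $p$ to a basis of $V$, because $p>2$ so $U(\g_{\mathbb Z})\otimes F\cong U(\g)$). So the real content is to show each $\bar V(\l_i)$ has the ``expected'' dimension $\dim V(\l_i)$, and then to invoke $\sum_i\dim V(\l_i)=\dim V_Q$, which is exactly the complete reducibility $V_Q=V_1\oplus\cdots\oplus V_s$ recorded just before Lemma 3.21.

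First I would recall that $\mathcal U'_{\mathbb Z}v_{\l_i}$ is, by the proof of \cite[Theorem 27.1]{hu1}, a free $\mathbb Z$-module of rank $\dim V(\l_i)$; hence $\bar V(\l_i)=\mathcal U'_{\mathbb Z}v_{\l_i}\otimes F$ has $\dim_F\bar V(\l_i)=\dim V(\l_i)$ with no drop, simply because tensoring a free $\mathbb Z$-module with $F$ preserves rank. Summing over $i$ gives $\sum_{i=1}^s\dim_F\bar V(\l_i)=\sum_{i=1}^s\dim V(\l_i)=\dim_Q V_Q=\dim_F V$. Combined with the inclusion $\bar V(\l_1)+\cdots+\bar V(\l_s)\subseteq V$, the equality of dimensions forces the sum to be all of $V$; and the sum is direct because, under $(C1)$ and $(C1')$, the maximal vectors $\bar v_{\l_1},\dots,\bar v_{\l_s}$ are linearly independent in $V$ (established in the paragraph preceding the corollary via the rank-$s$ reduction $\bar A_{l\times s}$), so distinct simple submodules $\bar V(\l_i)$ — which are simple and pairwise generated by weight vectors of distinct highest weights after possibly noting multiplicities — intersect trivially, or more directly: a sum of subspaces whose dimensions add up to the dimension of the ambient space and whose total span is the whole space is automatically a direct sum.

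The one genuinely delicate point is making sure the sum $\bar V(\l_1)+\cdots+\bar V(\l_s)$ inside $V$ is actually direct \emph{before} counting dimensions, or else the dimension-counting argument must be run in the other direction. The clean way is: from $(C1)$ each $\bar V(\l_i)$ is simple of dimension $\dim V(\l_i)$, and the linear independence of the $\bar v_{\l_i}$ guarantees the natural map $\bigoplus_i \bar V(\l_i)\to V$ is injective (an element of the kernel would be a relation among the simple submodules; projecting to highest-weight lines of each $\l_i$-isotypic piece and using independence of the $\bar v_{\l_i}$ kills it). Then $\dim\bigl(\bigoplus_i\bar V(\l_i)\bigr)=\sum_i\dim V(\l_i)=\dim V_Q=\dim V$, so injectivity upgrades to an isomorphism $\bigoplus_i\bar V(\l_i)\xrightarrow{\ \sim\ }V$, which is the assertion. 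I expect the bookkeeping around why $\sum_i\dim V(\l_i)=\dim V_Q$ (i.e. tracking that the $Q$-decomposition $V_Q=\bigoplus V_i$ base-changes compatibly and that no new composition factors appear mod $p$ under $(C1),(C1')$) to be the part requiring the most care, but all of it is already set up by Lemma 3.21, the discussion of the matrix $A_{l\times s}$, and the Remark identifying $V$ with $U^{d_1}(\g_{\0})\oplus\wedge^d(\g_1)$.
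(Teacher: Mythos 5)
Your proposal runs the same strategy as the paper: reduce the corollary to (a) the dimension match $\sum_i\dim_F\bar V(\l_i)=\dim_F V$, which you justify correctly via the freeness of $\mathcal U'_{\mathbb Z}v_{\l_i}$ and $\dim_Q V_Q=\sum_i\dim V(\l_i)$, and (b) the directness of $\bar V(\l_1)+\cdots+\bar V(\l_s)$ inside $V$, traced back to the linear independence of $\bar v_{\l_1},\dots,\bar v_{\l_s}$. You also correctly single out (b) as the real content. The paper takes (a) for granted and spends its effort on (b).

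Where you have a genuine gap is in the actual execution of (b). Your justification that the natural map $\bigoplus_i\bar V(\l_i)\to V$ is injective is ``projecting to highest-weight lines of each $\l_i$-isotypic piece and using independence of the $\bar v_{\l_i}$ kills it.'' This is not an argument: you have no isotypic decomposition of $V$ available in characteristic $p$, and the $\l_i$ need not be pairwise distinct, so ``the $\l_i$-isotypic piece'' is ill-defined. Likewise the remark that the simple submodules ``intersect trivially'' does not yield directness of a sum of $s>2$ subspaces, and the fallback that a sum whose dimensions add up correctly and span the whole space is automatically direct is circular, since you only know the sum lands inside $V$. The paper fills this in with a concrete raising-operator argument: given a relation $\sum_i x_i=0$ with the $x_i$ taken to be $T\cap G'$-weight vectors, if some $x_i\neq 0$ has weight strictly below $\l_i$ then (since $\bar V(\l_i)$ is simple, so the only maximal line is $F\bar v_{\l_i}$) there is a simple root $\a$ with $e_\a x_i\neq 0$; applying $e_\a$ to the relation and iterating, using that each $\bar V(\l_j)$ has finitely many weights, one lands on a nontrivial relation $\sum_j x_j'=0$ with each nonzero $x_j'$ a multiple of $\bar v_{\l_j}$, contradicting the linear independence established just before the corollary. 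You should replace the hand-wave by this raising argument, or by the equivalent maximal-weight argument, to make the proof go through.
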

\begin{proof} Since both sides have the same dimension, it suffices to show that $\bar V(\l_1)+\cdots +\bar V(\l_s)\subseteq V$ is a direct sum.\par
 Assume $x_1+\cdots +x_s=0$, $x_i\in \bar V(\l_i)$, $1\leq i\leq s$. Since each $\bar V(\l_i)$ is a simple  $G'$-module, we may assume that each $x_i$ is a weight vector with respect to the maximal torus $T\cap G'$. \par Suppose that there is $i$, $1\leq i\leq s$, such that $x_i\neq 0$.
 If $\text{wt} (x_i)<\l_i$, then there is  $\a\in \{\e_1-\e_2,\dots, \e_{n-1}-\e_n\}$ such that $e_{\a} x_i\neq 0$. Clearly we have $\text{wt} (x_i)<\text{wt} (e_{\a}x_i)\leq \l_i$. Since each $\bar V(\l_i)$ has finite many nonzero weight spaces,
 by repeated applications of appropriate $e_{\a}$'s we have $$x_1'+\cdots +x_s'=0, \  x'_i\in \bar V(\l_i),$$ where $x'_i$'s are not all zero and   each $x_i$ is a multiple of $\bar v_{\l_i}$, contrary to the fact that $\bar v_{\l_1}, \dots, \bar v_{\l_s}$ are linearly independent. \par
 Thus, we conclude that $x_i=0$ for all $1\leq i\leq s$, which completes the proof.
\end{proof}
 With the assumption $p>d_1$,   we have an isomorphism of rational $G$-modules (hence of restricted $\g_{\0}$-modules) $U^{d_1}(\g_{\0})\cong u^{d_1}(\g_{\0})$, and hence $V\cong u^{d_1}(\g_{\0})\oplus \wedge ^d(\g_1)$. Together with assumptions $(C1)$ and $(C1')$,
 we obtain an isomorphism of rational $G'$-modules
$$\bar V(\l_1)\oplus \cdots \oplus \bar V(\l_s)\cong u^{d_1}(\g_{\0})\oplus \wedge ^d(\g_1).$$ Therefore,  both $u^{d_1}(\g_{\0})$ and $\wedge ^d(\g_1)$ are completely reducible as  restricted $\text{ad}_{\g'_{\0}}$-modules.\par

In the remainder of the paper we assume the following conditions:\par  (H1) $(C1)$ and  $(C1')$;\par  (H2) $p>d_1$;\par (H3) $p>\text{dim}\wedge^d (\g_1)=C^d_{\frac{1}{2}n(n+1)}$.\par

Under these assumptions, since $d_1>2d$, the condition on $p$ in Theorem 3.14 is satisfied.

\subsubsection{Central elements in $\bar{\mathfrak u}$}
Recall the weight $\l=(n-1)\e_1+\cdots +\e_{n-1}\in\Lambda_0$. From 3.5.1 we see that $\l$ is typical, since $p>d_1=(n-1)n^2\geq n^2\geq 2n-1$.\par
 Let $\vartheta$ be the representation of $u(\g_{\0})$ afforded by  the simple G-module $L(\l)$ in 3.5.1.   According to Theorem 3.20,  $\vartheta$ is an epimorphism from $u(\g_{\0})$ to $\text{End}_F L(\l)$. This induces  an isomorphism of algebras $$u(\g_{\0})/\text{ker} \vartheta\cong \text{End}_F L(\l).$$  Denote the restriction of $\vartheta$ to $u^{d_1}(\g_{\0})$ by $\vartheta_1$. Then  $\text{ker}\vartheta_1=\text{ker}\vartheta\cap u^{d_1}(\g_{\0})$, and  which is easily seen to be an $\text{Ad}G$-submodule  of $u^{d_1}(\g_{\0})$. By Theorem 3.20 we obtain an isomorphism of $G$-modules $$u^{d_1}(\g_{\0})/\text{ker} \vartheta_1\cong \text{End}_F L(\l).$$\par
 Recall the mapping $\varphi$. Let $v_{\l}\in L(\l)$ be the maximal vector. Since $\l$ is typical, we have $$(YZ)v_{\l}=(YZ)_0v_{\l}=\d_{\l}v_{\l}\neq 0,$$ implying that $\text{Im}\varphi\varsubsetneq \text{ker}\vartheta$. Since $\text{Im}\varphi\subseteq u^d(\g_{\0})$ and $d<d_1$,  we get $\text{Im}\varphi \varsubsetneq\text{ker}\vartheta_1$.
  Since  $u^{d_1}(\g_{\0})$ is a completely reducible  $\g_{\0}'$-module, so is $\text{Im}\varphi$.   It follows that  $L(\mu)\varsubsetneq\text{ker}\vartheta_1$ for some simple summand $L(\mu)$ of $\text{Im}\varphi$. \par Since $L(\mu)$ is simple,  we have $L(\mu)\cap \text{ker}\vartheta_1=\{0\}$.
Since $u^{d_1}(\g_{\0})$ is  completely reducible, we have $$u^{d_1}(\g_{\0})=(L(\mu) +\text{ker}\vartheta_1)\oplus W$$ for some $G'$-submodule $W$.\par Let $T_1$ be the toral subgroup of $G$ defined by $$T_1=\{diag(t,\dots, t)|t\in F^*\}.$$ Then we have $G=G'\times T_1$. Since $T_1$ acts trivially on $u(\g_{\0})$,  every $G'$-submodule of $u(\g_{\0})$ is naturally a $G$-submodule.  Then we have a $G$-module isomorphism $$u^{d_1}(\g_{\0})/\text{ker}\vartheta_1\cong L(\mu)+W\cong \text{End}_F L(\l).$$ Since the trace form on $\text{End}_F L(\l)$ is $G$-invariant, nondegenerate and symmetric, so is the induced  form on $L(\mu)+W$.\par
Since $\wedge^d(\g_1)$ is completely reducible, we have an isomorphism of $u(\g_{\0}')$-modules (also $G'$-modules) $$\text{ker}\varphi \oplus \text{Im}\varphi\cong \wedge^d(\g_1).$$ Let $Z_1,\dots, Z_r$ be a basis of the simple $u(\g'_{\0})$-submodule of $\wedge^d(\g_1)$ whose image under $\varphi$ is $L(\mu)$.
 Then $(YZ_1)_0, \dots, (YZ_r)_0$ is a basis of $L(\mu)$, from which we extend to a basis of $L(\mu)+W$: $$(YZ_1)_0, \dots, (YZ_r)_0, w_1,\dots, w_t.$$
Let $(YZ_1)_0^{\lor}, \dots, (YZ_r)_0^{\lor}, w_1^{\lor},\dots, w_t^{\lor}$ be the dual basis.
Set $$W^{\perp}=\{x\in L(\mu)+W| (x,W)=0\}.$$ Since $W$ is a $G$-submodule of $u(\g_{\0})$, so is  $W^{\perp}$. By \cite[Section 6.1]{ja}, we have $$\text{dim} W^{\perp}=\text{dim} (L(\mu)+W)-\text{dim} W=\text{dim}L(\mu)=r,$$
 implying that $W^{\perp}=\langle (YZ_1)_0^{\lor},\dots, (YZ_r)_0^{\lor}\rangle$.\par

For  $g\in G'$, assume  $$g(Z_1,\dots, Z_r)^t=(b^g_{ij})_{r\times r} (Z_1,\dots, Z_r)^t$$ and $$g((YZ_1)_0^{\lor}, \dots, (YZ_r)_0^{\lor})^t=(c_{ij}^g)_{r\times r} ((YZ_1)_0^{\lor}, \dots, (YZ_r)_0^{\lor})^t.$$ Then since $Y$ is a $G'$-invariant, we have $$g(YZ_1,\dots, YZ_r)^t=(b^g_{ij})_{r\times r} (YZ_1,\dots, YZ_r)^t.$$ Since the mapping $(,)_0$ is a $G$-module homomorphism, we obtain $$g((YZ_1)_0, \dots, (YZ_r)_0)^t=(b_{ij}^g)_{r\times r} ((YZ_1)_0,\dots, (YZ_r)_0)^t.$$ Then the $G$-invariance of the bilinear form implies  that $(b_{ij}^g)_{r\times r}^t=(c_{ij}^g)_{r\times r}^{-1}.$\par
 Set $\omega=\sum_{i=1}^r Z_i(YZ_i)_0^{\lor}\in u(\g)$. The following conclusion follows from a standard argument in linear algebra.
\begin{lemma} For each $g\in G'$, we have $g\omega=\omega$.
\end{lemma}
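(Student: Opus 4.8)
The plan is to compute $g\o$ directly in terms of the two transformation matrices $(b^g_{ij})$ and $(c^g_{ij})$ and use the relation $(b^g_{ij})^t = (c^g_{ij})^{-1}$ established just before the statement. Write $B = (b^g_{ij})_{r\times r}$ and $C = (c^g_{ij})_{r\times r}$. First I would record that, since $g$ acts as an algebra automorphism of $u(\g)$ (via $\mathrm{Ad}\rho$ extended to $u(\g)$) and since the element $\o = \sum_{i=1}^r Z_i (YZ_i)_0^{\lor}$ is a sum of products of the $Z_i$'s with the dual-basis vectors $(YZ_i)_0^{\lor}$, we have
$$
g\o = \sum_{i=1}^r (gZ_i)\bigl(g(YZ_i)_0^{\lor}\bigr).
$$
From the assumed transformation laws, $gZ_i = \sum_j b^g_{ij} Z_j$ and $g(YZ_i)_0^{\lor} = \sum_k c^g_{ik} (YZ_k)_0^{\lor}$.

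Next I would substitute these expansions and collect coefficients:
$$
g\o = \sum_{i=1}^r \sum_{j=1}^r \sum_{k=1}^r b^g_{ij}\, c^g_{ik}\, Z_j (YZ_k)_0^{\lor}
    = \sum_{j,k} \Bigl(\sum_i b^g_{ij} c^g_{ik}\Bigr) Z_j (YZ_k)_0^{\lor}.
$$
The inner sum $\sum_i b^g_{ij} c^g_{ik}$ is the $(j,k)$ entry of $B^t C$. By the displayed relation $(b^g_{ij})^t = (c^g_{ij})^{-1}$, i.e.\ $B^t = C^{-1}$, we get $B^t C = C^{-1}C = \mathrm{Id}$, so $\sum_i b^g_{ij} c^g_{ik} = \d_{jk}$. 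Hence
$$
g\o = \sum_{j,k} \d_{jk}\, Z_j (YZ_k)_0^{\lor} = \sum_{j=1}^r Z_j (YZ_j)_0^{\lor} = \o,
$$
as required.

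The only point needing care — and the one I would state explicitly rather than skip — is the very first step: that $g$ distributes over the product $Z_i \cdot (YZ_i)_0^{\lor}$ inside $u(\g)$. This is exactly where we use that the $G$-action on $u(\g)$ (and on its quotients, though here we work in $u(\g)$ itself) is by algebra automorphisms, which follows from the fact that $\mathrm{Ad}\rho$ is an action by Lie superalgebra automorphisms of $\g$ and hence lifts to algebra automorphisms of $U(\g)$ preserving the defining ideals. The rest is the routine linear-algebra manipulation above; the essential input is the biorthogonality $B^t C = \mathrm{Id}$ coming from the $G$-invariance of the nondegenerate trace form on $L(\mu)+W$, which was already derived. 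So I do not expect any genuine obstacle here — the statement is a bookkeeping consequence of "contragredient matrices are inverse-transpose of each other," and the proof is three lines once the automorphism property is invoked.
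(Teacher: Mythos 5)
Your proof is correct and is exactly the ``standard argument in linear algebra'' that the paper alludes to without writing out: expand $g\omega$ using the algebra-automorphism property of the $\mathrm{Ad}\rho$-action on $u(\g)$, collect coefficients to obtain the $(j,k)$ entry of $B^tC$, and invoke the contragredience relation $B^t=C^{-1}$ established just before the lemma. No gaps.
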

Then by \cite[7.11(5), Part I]{jj1}, we have $[x, \omega]=0$ for all $x\in \g_{\0}'$.\par
\begin{lemma} $(Y\omega)_0\neq 0$.
\end{lemma}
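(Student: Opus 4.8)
The plan is to compute $(Y\omega)_0$ directly from the definition $\omega=\sum_{i=1}^r Z_i(YZ_i)_0^\lor$ and relate it to the nondegenerate bilinear form on $L(\mu)+W$. First I would observe that $Y\omega=\sum_{i=1}^r (YZ_i)(YZ_i)_0^\lor$; since each $(YZ_i)_0^\lor\in W^\perp\subseteq L(\mu)+W\subseteq u^{d_1}(\g_{\0})$, the product $(YZ_i)(YZ_i)_0^\lor$ lives in $\bar{\mathfrak u}$, and applying the projection $(\,,\,)_0$ to it produces, roughly speaking, the product inside $u(\g_{\0})$ of the $\wedge^{\le d}(\g_1)$-component-free part. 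The crucial point is that the definition of $(\,,\,)_0$ picks out exactly the term with trivial $\wedge(\g_{-1})$ and $\wedge(\g_1)$ factors, so $(Y Z_i \cdot u)_0$ for $u\in u^{d_1}(\g_{\0})$ equals $(YZ_i)_0\cdot u$ modulo terms that still carry a nontrivial exterior factor; these extra terms vanish under $(\,,\,)_0$ because $u$ is purely in $u(\g_{\0})$ and cannot cancel the $Y$. Hence I expect $(Y\omega)_0=\sum_{i=1}^r (YZ_i)_0\,(YZ_i)_0^\lor$ as an element of $u(\g_{\0})$, or at least that this is its leading contribution.

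Next I would interpret this sum via the representation $\vartheta_1$ of Theorem 3.20. Since $(YZ_1)_0,\dots,(YZ_r)_0$ is a basis of $L(\mu)$ and $(YZ_1)_0^\lor,\dots,(YZ_r)_0^\lor$ is the dual basis of $W^\perp$ with respect to the $G$-invariant nondegenerate form on $L(\mu)+W$, the element $\sum_i (YZ_i)_0(YZ_i)_0^\lor$ is (up to the identification $u^{d_1}(\g_{\0})/\ker\vartheta_1\cong\operatorname{End}_F L(\l)$) the image of a "Casimir-type" element: it is precisely the element of $\operatorname{End}_F L(\l)$ corresponding under the trace form to the identity on the subspace $L(\mu)$, i.e. $\vartheta_1$ of it is the orthogonal projection-like operator whose trace is $\dim L(\mu)=r\neq 0$ (note $p>\dim\wedge^d(\g_1)\ge r$ by (H3)). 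A nonzero trace forces the operator to be nonzero, hence $(Y\omega)_0\notin\ker\vartheta_1$, and in particular $(Y\omega)_0\neq 0$.

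The main obstacle is the first step: justifying that $(\,,\,)_0$ is multiplicative enough in the relevant sense, i.e. that $(YZ_i\cdot u)_0=(YZ_i)_0\cdot u$ for $u\in u^{d_1}(\g_{\0})$, or controlling the correction terms. This requires unwinding the PBW-type basis $z^{\d'}x^s y^{\d}$ of $\bar{\mathfrak u}$ and using Lemma 3.1 to see that moving $u$ (which lies entirely in the $x$-part) past $YZ_i$ does not create new terms with trivial $y$- and $z$-exponents unless they were already present; equivalently, the $\mathbb Z$-grading $\bar{\mathfrak u}=\oplus_k\bar{\mathfrak u}_k$ together with the fact that $YZ_i\in\bar{\mathfrak u}_0$ and $u\in\bar{\mathfrak u}_0$ constrains things, and the zeroth-graded piece $\bar{\mathfrak u}_0=u(\g_{\0})\oplus\mathfrak a$ splits off the $u(\g_{\0})$-part cleanly. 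I would carry this out by writing $YZ_i=(YZ_i)_0+a_i$ with $a_i\in\mathfrak a$, noting $(a_i\cdot u)_0=0$ since $a_i u$ still has a nontrivial exterior factor, and $(((YZ_i)_0+a_i)u)_0=((YZ_i)_0 u)_0=(YZ_i)_0 u$ because $(YZ_i)_0 u\in u(\g_{\0})$. Then the computation of $(Y\omega)_0$ reduces to the trace argument above, and the hypotheses (H1)–(H3) guarantee $r\not\equiv 0\pmod p$, completing the proof.
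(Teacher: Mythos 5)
Your proposal is correct and follows essentially the same route as the paper: reduce $(Y\omega)_0$ to $\sum_{i=1}^r (YZ_i)_0(YZ_i)_0^{\lor}$, apply $\vartheta$, and observe that the trace equals $r$, which is nonzero in $F$ by hypothesis (H3). The only difference is that you spell out the justification that $(YZ_i u)_0=(YZ_i)_0u$ for $u\in u(\g_{\0})$ (via the decomposition $\bar{\mathfrak u}_0=u(\g_{\0})\oplus\mathfrak a$ and the stability $\mathfrak a\,u(\g_{\0})\subseteq\mathfrak a$), a step the paper treats as immediate.
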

\begin{proof} Since $(Y\omega)_0=\sum_{i=1}^r (YZ_i)_0(YZ_i)_0^{\lor}$, we have $$tr \vartheta(Y\omega)_0=\sum_{i=1}^r tr \vartheta((YZ_i)_0)\vartheta((YZ_i)_0^{\lor})=\sum_{i=1}^r((fZ_i)_0, (fZ)_0^{\lor})=r\neq 0,$$ where the last inequality follows from the assumption $p>\text{dim}\wedge^d(\g_1)\geq r$. It follows  that $(Y\omega)_0\neq 0$.
\end{proof}
Let $M=M_{\0}\oplus M_{\1}$ be a  restricted $\g$-module. If both $M_{\0}$ and $M_{\1}$ are rational $T$-modules such that the $\mathfrak h$-action is induced from the differential of the $T$-action. In addition, for all $t\in T$, $x\in \g_{\0}$ and $m\in M$, we have $t(xm)=\text{Ad}(t)(x)(tm)$. Then we call $M$ a $u(\g)-T$-module (\cite[1.8]{jj4}). For example, $u(\g)$ and $\bar{\mathfrak u}$ are both $u(\g)-T$-modules.\par

  Fix an order of the root vectors $y_{ij}$ of $\g_{-1}$ in the following lemma.
\begin{lemma} (1) Let $S=\Pi_{i<j}\text{ad} y_{ij} (\omega)\in u(\g)$ and let $\bar S$ be its image in $\bar {\mathfrak u}$. Then $\bar S\in \mathcal Z$ and $h(\bar S)=k\Theta$ for some $k\in F^*$.  \par
(2) For $z\in u(\g_{\0})^G$, let $S_z=\Pi_{i<j} \text{ad} y_{ij} (z\omega)\in u(\g)$ and let $\bar S_z$ be its image in $\bar{\mathfrak u}$. Then  $\bar S_z\in \mathcal Z$ and $h(\bar S_z)=h(\bar S)h(z)$.
\end{lemma}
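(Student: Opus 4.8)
The plan is to verify the two claims of the lemma in the order (1) then (2), since (2) will be largely a corollary of the techniques used for (1). For part (1), I would first check that $\bar S \in \mathcal Z$, i.e. that $\bar S$ is $G$-invariant and commutes with $\g_1$. The $G'$-invariance should follow from Lemma 3.24 ($g\omega = \omega$ for $g\in G'$) together with the fact that the product $\Pi_{i<j}\mathrm{ad}\,y_{ij}$ applied to a $G'$-invariant stays $G'$-invariant, because $Y = \Pi_{i<j}y_{ij}$ is itself $G'$-invariant (as shown in the proof of Lemma 3.16) and $\g_{-1}$ is a $G$-submodule; more precisely, conjugating $\Pi_{i<j}\mathrm{ad}\,y_{ij}(\omega)$ by $g\in G'$ permutes/recombines the $y_{ij}$ linearly, and one argues as in the proof of Lemma 3.16 that the combined effect fixes $\bar S$. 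For full $G$-invariance one then checks invariance under the central torus $T_1$ (which acts trivially on $u(\g_{\0})$ but nontrivially on the $y_{ij}$ and $z_{ij}$); here the point is that $\bar S \in \bar{\mathfrak u}_0$ by construction (each $\mathrm{ad}\,y_{ij}$ lowers $\mathbb Z$-degree by $1$ and $\omega = \sum_i Z_i(YZ_i)_0^\vee$ has $\mathbb Z$-degree $d$ since each $Z_i\in\wedge^d(\g_1)$ contributes $+d$ and $(YZ_i)_0^\vee\in u(\g_{\0})$ contributes $0$... wait, $Z_i \in \wedge^d(\g_1)$ so $\omega\in\bar{\mathfrak u}_d$ and $S\in\bar{\mathfrak u}_0$), hence $T_1$-invariant by the weight bookkeeping of Lemma 3.5. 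Commutation with $\g_1$ then comes from \cite[7.11(5), Part I]{jj1} applied to the $G$-invariance, exactly as in the displayed remark after the definition of $\mathcal Z$. This gives $\bar S\in\mathcal Z$.

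Next I would compute $h(\bar S)$. By Theorem 3.14 we know $h(\mathcal Z)\subseteq \Theta\,u(\mathfrak h)^W + F$, so $h(\bar S) = k\Theta + c$ for some $k,c\in F$; the task is to show $c = 0$ and $k\neq 0$. To pin these down I would evaluate $h(\bar S)$ on weights $\mu\in\Lambda_0$ by letting $\bar S$ act on the maximal vector $v_\mu$ of a baby Verma module $Z(\mu)$ (or, better, on the maximal vector of the simple module $K(L(\mu))$ for a typical $\mu$, where Proposition 2.3 guarantees simplicity). The key computation is that, up to applying the lowering operators $\Pi_{i<j}\mathrm{ad}\,y_{ij}$, the element $\bar S$ acting on the highest-weight vector is controlled by $(Y\omega)_0$, which is nonzero by Lemma 3.25, and by the same Kac-module pairing $ZY\otimes v_\mu = \pm\delta_\mu\otimes v_\mu$ used in Proposition 2.3. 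Concretely, I expect $h(\bar S)(\mu)$ to be a nonzero scalar multiple of $\mathrm{tr}\,\vartheta_\mu\big((Y\omega)_0\big)$ composed with $\delta_\mu = (\Pi_{i<j}\Theta_{ij})(\mu)$, and since $\delta_\mu/\Theta(\mu)$... actually $\Theta(\mu) = \Pi_{i\neq j}\Theta_{ij}(\mu)$ while $\delta_\mu = \Pi_{i<j}\Theta_{ij}(\mu)$, so these differ; the cleanest route is: show $h(\bar S)(\mu) = 0$ whenever $\Theta(\mu) = 0$ (by an atypicality/Kac-module-reducibility argument paralleling Lemmas 3.8--3.10) to force $c = 0$, and then exhibit one typical $\mu$ with $h(\bar S)(\mu)\neq 0$ to force $k\neq 0$.

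For part (2), the element $z\in u(\g_{\0})^G$ is central in $u(\g_{\0})$, hence $[z,\omega]$ can be analyzed via $[x,\omega]=0$ for $x\in\g'_{\0}$ (Lemma 3.24) — but $z$ need only be $G$-invariant, so I would use that $z$ commutes with all of $\g_{\0}$ (again \cite[7.11(5)]{jj1}), whence $z\omega = \omega z$ and $z$ commutes past every $\mathrm{ad}\,y_{ij}$ up to terms accounted for by the $\mathbb Z$-grading. The statement $\bar S_z\in\mathcal Z$ follows as in (1). For $h(\bar S_z) = h(\bar S)h(z)$: since $z$ is $G$-invariant it lies in $u(\g_{\0})^T$, and the Harish-Chandra projection $h$ restricted to $u(\g_{\0})^T$ is multiplicative on the relevant subalgebra (it is an algebra homomorphism onto $u(\mathfrak h)$ when restricted appropriately — this is the content of the decomposition $\bar{\mathfrak u}^T = u(\mathfrak h)\oplus\mathfrak l$ with $\mathfrak l$ an ideal); since $z$ commutes with $\g_{\0}$ and with $\omega$, one gets $\Pi_{i<j}\mathrm{ad}\,y_{ij}(z\omega) = z\cdot\Pi_{i<j}\mathrm{ad}\,y_{ij}(\omega) + (\text{terms in } \mathfrak n^-\bar{\mathfrak u})$, so $h(\bar S_z) = h(z)\,h(\bar S) = h(\bar S)h(z)$, using that $h(z)\in u(\mathfrak h)$ is central there.

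\medskip
\noindent\textbf{Main obstacle.} The hard part will be the exact evaluation of $h(\bar S)$ on weights and showing the constant term $c$ vanishes while $k\neq 0$. Theorem 3.14 only gives membership in $\Theta\,u(\mathfrak h)^W + F$; ruling out a nonzero scalar summand requires genuinely computing $\bar S$ on a lowest-weight (or appropriate atypical) vector, and the bookkeeping of how the $d = \tfrac12 n(n-1)$ operators $\mathrm{ad}\,y_{ij}$ interact with $\omega = \sum_i Z_i(YZ_i)_0^\vee$ — in particular isolating the $(Y\,\cdot\,)_0$ component and relating the surviving trace to $\delta_\mu$ — is where the real work lies. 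I would model this computation closely on the proof of \cite[Lemma 3.1]{kv} and on the proof of Proposition 2.3 above, and on the trace computation in Lemma 3.25, which already isolates exactly the scalar $r = \dim L(\mu) \neq 0$ that should reappear as (a factor of) $k$.
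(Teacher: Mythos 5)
Your overall plan is sensible, but there are two genuine gaps, one in each half of part (1), and the first one is serious.

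First, your verification that $\bar S\in\mathcal Z$ breaks down at the step ``Commutation with $\g_1$ then comes from \cite[7.11(5), Part I]{jj1} applied to the $G$-invariance.'' That reference converts $G$-invariance into invariance under $\mathrm{Lie}(G)=\g_{\0}$, i.e.\ it gives $[\bar S,\g_{\0}]=0$; it says nothing about $\g_1$. Indeed, in the definition of $\mathcal Z$ the condition $[u,\g_1]=0$ is imposed \emph{in addition to} $G$-invariance precisely because it does not follow from it, and the displayed remark after that definition already \emph{assumes} $[u,\g_1]=0$ before invoking \cite[7.11(5)]{jj1}. So you have no argument for $[\bar S,\g_1]=0$. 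The paper's proof of this is the real work of part (1): it first shows $\bar S\neq 0$ by evaluating on $K(L(\nu))$ where $Y\otimes L(\nu)\cong L(\l)$, then considers the $\mathrm{ad}_{\g}$-submodule $V$ of $\bar{\mathfrak u}$ generated by $\bar\omega$, identifies $V\cong K(F\bar\omega)$ as a $u(\g)$--$T$-module, takes a simple $u(\g)$--$T$-submodule $M\subseteq V$ and shows (using that $\mathrm{ad}(\g_{\0})\bar S=0$ and a $T$-weight comparison which forces $\wedge(\g_1)^+\bar S=0$) that $M=F\bar S$; the vanishing $[\bar S,\g_1]=0$ is then read off. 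Nothing in your sketch replaces this.

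Second, in pinning down $h(\bar S)$ you have correctly spotted that Theorem 3.14 only gives $h(\bar S)=\Theta q(h)+c$ with $q(h)\in u(\mathfrak h)^{W\cdot}$ and $c\in F$, and your plan for showing $c=0$ (evaluate at an atypical weight) and $k\neq 0$ (evaluate at a typical weight where $\bar S$ acts nontrivially) matches the paper, which uses $\mu=0$ for the first and the weight $\nu$ from the $\bar S\neq 0$ computation for the second. But you have left the middle step untouched: \emph{why is $q(h)$ a scalar at all?} A single nonvanishing evaluation cannot tell a constant $k$ from a nonconstant $q(h)$. The paper closes this by a degree argument: it observes that $\bar S\in u^{2d}(\g)$, hence $h(\bar S)$ has degree at most $2d=\deg\Theta$ in $u(\mathfrak h)$, which forces $q(h)$ to be a constant. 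Without some bound of this kind, your argument does not deliver the form $k\Theta$. (For part (2), your heuristic that $z$ commutes past the $\mathrm{ad}\,y_{ij}$ modulo $\mathfrak n^{-}\bar{\mathfrak u}$ is in the right spirit, but the paper instead evaluates $S_zY\otimes v_\mu$ directly in $K(L(\mu))$, using that $FY$ is a one-dimensional $\mathrm{ad}_{\g_{\0}}$-module to push $z$ across $Y$; either route needs to be made precise, and yours is not.)
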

\begin{proof} (1) Since $ F Y$ is a 1-dimensional $\text{ad}_{\g_{\0}}$-module,  $Y\otimes_F M$ is a  restricted simple $\g_{\0}$-module, for each  restricted simple $\g_{\0}$-module $M$. Then the mapping $Y\otimes_F -$ defines a bijection on the set of all restricted  simple $\g_{\0}$-modules.\par Now let $L(\nu)$ be a restricted simple $\g_{\0}$-module such that $Y\otimes_F L(\nu)$ is isomorphic to $L(\l)$ above. By the proof of Lemma 3.15, we have $(Y\omega)_0 Y\otimes m\neq 0$ for some $m\in L(\nu)$. Applying Proposition 2.3 to $K(L(\nu))$,  we have $$(Y\omega)_0Y\otimes m=Y\omega Y\otimes m=SY\otimes m=\bar SY\otimes m,$$ and hence $\bar S\neq 0$.\par
Since $\bar{\omega}\in \bar{\mathfrak u}_d$, $\g_1\bar{\omega}=\bar{\omega}\g_1=0$. We also have $g\bar{\omega}=\bar{\omega}$ for all $g\in G'$. Since $(Y\omega)_0$ is a $G$-invariant, it follows that $\bar{\omega}$ has $T$-weight $-\sigma$, where $\sigma $ is the $T$-weight of $Y$. Let $V\subseteq \bar{\mathfrak u}$ be an $\text{ad}_{\g}$-submodule generated by $\bar{\omega}$, which is also a $G$-submodule. Since $\bar S\neq 0$, we have $$V\cong u(\g)\otimes_{ u(\g_{\0}+\g_1)}F\bar{\omega}=K(F\bar{\omega}).$$ Clearly $V$ is a $u(\g)-T$-module.
Since  $V\cong \wedge (\g_{-1})\otimes F\bar\omega$, each element of $V$ may be written as $\sum_i u_i \bar\omega$ with $u_i\in \wedge ^i(\g_{-1})$.\par  Let $M\subseteq V$ be a simple $u(\g)-T$-submodule and let $\sum_i u_i \bar\omega\in M$ be a nonzero $T$-weight vector. By applying appropriate $y_{ij}$'s to this vector, we obtain $Y\bar\omega=\bar S\in M$. Since $M$ is simple, we obtain  $$M=\bar{\mathfrak u}\cdot\bar S=\wedge^{\leq d}(\g_1)\cdot\bar S.$$ Let $\wedge(\g_1)^+=:\sum_{i\geq 1} \wedge^i(\g_1)$. Then a short computation shows that $\wedge(\g_1)^+\bar S$ is a $u(\g)-T$-submodule of $M$. By comparing the $T$-weights we see that $\bar S\notin \wedge(\g_1)^+\bar S$, implying that  $M=F\bar S$, and hence $[\bar S, \g_{\bar 1}]=0$. From the proof of Lemma 3.16, we see that $Y$ is a $G'$-invariant. It follows that $\bar S$ is also a $G'$-invariant. This gives $\bar S\in\mathcal Z$,  since the $T$-weight of $\bar S$ is 0.\par
Note that $\bar S\in u^{2d}(\g)$. Then we have by Theorem 3.14 that $h(\bar S)=k\Theta +k'$ for some $k,k'\in F$.
Since $\bar S$ annihilates the trivial $\g$-module, in view of the fact that $\Theta (0)=0$,  we have $k'=h(\bar S)(0)=0$.
From above we have $\bar S K(L(\nu))\neq 0$, implying that  $h(\bar S)(\nu)=k\Theta (\nu)\neq 0$ and hence  $k\neq 0$.\par
 (2)  By a similar argument as above we have $\bar S_z\in \mathcal Z$, for $z\in u(\g_{\0})^G$. Let $\mu\in \Lambda_0$ and let $v_{\mu}\in K(L(\mu))$ be the maximal vector  of weight $\mu$. Since $FY$ is an $\text{ad}_{\g_{\0}}$-module, implying that $Yz=(z+c(z))Y$ for some $c(z)\in F$,  we have
 $$\begin{aligned} h(\bar S_z)(\mu) Y\otimes v_{\mu}&=S_zY\otimes v_{\mu}\\
 &=Yz\omega Y\otimes v_{\mu}\\&=(z+c(z))Y\omega Y\otimes v_{\mu}\\&
 =(z+c(z))\bar SY\otimes v_{\mu}\\&=\bar S(z+c(z))Y\otimes v_{\mu}\\&=\bar SY\otimes zv_{\mu}\\&=h(z)(\mu)\bar SY\otimes v_{\mu}\\&
 =h(z)(\mu)h(\bar S)(\mu)Y\otimes v_{\mu},\end{aligned}$$ implying that $h(\bar S_z)(\mu)=h(z)(\mu)h(\bar S)(\mu)$ for all $\mu\in\Lambda_0$, and hence  $h(\bar S_z)=h(z)h(\bar S)$.
\end{proof}
Immediately, we have the following conclusion.
\begin{corollary}  $$h(\mathcal Z)=F+\Theta u(\mathfrak h)^{W\cdot}.$$
\end{corollary}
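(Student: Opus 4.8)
The plan is to prove the two inclusions. One of them, $h(\mathcal Z)\subseteq F+\Theta\,u(\mathfrak h)^{W\cdot}$, is exactly Theorem 3.14, so the task is the reverse inclusion $F+\Theta\,u(\mathfrak h)^{W\cdot}\subseteq h(\mathcal Z)$. Since $h(\mathcal Z)$ is an $F$-subspace of $u(\mathfrak h)$ (the image of the subspace $\mathcal Z$ under the linear map $h$), it is enough to check that both $F$ and $\Theta\,u(\mathfrak h)^{W\cdot}$ lie inside it. The first is immediate: $F\cdot 1\subseteq\bar{\mathfrak u}_{\0}$ consists of $G$-fixed elements commuting with $\g_1$, hence $F\cdot 1\subseteq\mathcal Z$, and $h$ is the identity on scalars.

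For $\Theta\,u(\mathfrak h)^{W\cdot}\subseteq h(\mathcal Z)$ I would use the explicit central elements of Lemma 3.26. Part (1) gives $\bar S\in\mathcal Z$ with $h(\bar S)=k\Theta$ for some $k\in F^*$, and part (2) gives, for every $z\in u(\g_{\0})^G$, that $\bar S_z\in\mathcal Z$ with $h(\bar S_z)=h(\bar S)h(z)=k\,\Theta\,h(z)$. Hence $\Theta\cdot h\big(u(\g_{\0})^G\big)\subseteq h(\mathcal Z)$. Thus the whole statement reduces to the claim that $h|_{u(\g_{\0})^G}$ surjects onto $u(\mathfrak h)^{W\cdot}$: granting this, for any $f\in u(\mathfrak h)^{W\cdot}$ and $c\in F$ one writes $f=k^{-1}h(z)$ with $z\in u(\g_{\0})^G$, and then $\bar S_z+c\cdot 1\in\mathcal Z$ has $h$-image $k\,\Theta\,h(z)+c=\Theta f+c$; combined with Theorem 3.14 this gives the desired equality.

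The main obstacle is therefore the surjectivity $h\big(u(\g_{\0})^G\big)=u(\mathfrak h)^{W\cdot}$. I would handle it as follows. Because $\g_{\0}\cong\mathfrak{gl}(n)$, $G=GL_n(F)$ is connected, and $p>d_1\geq n$ under the running hypotheses, one identifies $u(\g_{\0})^G$ with the center $Z(u(\g_{\0}))$; moreover the restriction of $h$ to $u(\g_{\0})^T$ is the ordinary Harish--Chandra homomorphism attached to $\mathfrak n_0^+$, as recorded in Section 3.4. So the claim is the restricted Harish--Chandra isomorphism for $\mathfrak{gl}(n)$, namely that $h$ carries $Z(u(\g_{\0}))$ onto $u(\mathfrak h)^{W\cdot}$. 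This I would obtain by reduction modulo $p$ from the characteristic-zero Harish--Chandra isomorphism: the standard (Gelfand/Casimir) central elements of $U(\mathfrak{gl}(n))$ have Harish--Chandra images equal to the symmetric functions in the shifted variables $h_i+\rho_i$, and since $p>n$ forces $n!\neq 0$ in $F$, the $W$-averaging operator $\tfrac{1}{n!}\sum_{w\in W}w\cdot(-)$ is available exactly as in the proof of Theorem 3.14, so those images reduce to a generating set of the algebra $u(\mathfrak h)^{W\cdot}$. Once this surjectivity is established, the corollary follows at once from the two paragraphs above.
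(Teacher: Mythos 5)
Your proposal is essentially the paper's argument, which the paper compresses to the single word ``Immediately'': combine the inclusion $h(\mathcal Z)\subseteq F+\Theta u(\mathfrak h)^{W\cdot}$ from Theorem 3.14 with the explicit elements $\bar S$ and $\bar S_z$ of Lemma 3.25 to get the reverse inclusion, and along the way invoke the surjectivity $h\bigl(u(\g_{\0})^G\bigr)=u(\mathfrak h)^{W\cdot}$. The paper actually defers its justification of that surjectivity to Section 4 (in the proof of Theorem 4.3 it splits $u(\g_{\0})^G\cong u(\g'_{\0})^{G'}\otimes u(F\hbar)$ and cites Humphreys, \emph{Modular representations...}, Lemmas 3.2--3.3, for the fact that the restricted Harish--Chandra map on $u(\g'_{\0})^{G'}$ surjects onto $u(\mathfrak h_1)^{W\cdot}$). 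Your reduction-mod-$p$ sketch via Gelfand/Casimir elements is an alternative route to the same fact; it is reasonable, although you need not and probably should not identify $u(\g_{\0})^G$ with the full center $Z(u(\g_{\0}))$ --- in characteristic $p$ the center of the restricted enveloping algebra can be strictly larger, and nothing in the argument requires that identification, only the surjectivity of $h$ on $u(\g_{\0})^G$.

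Two small corrections. First, there is an algebra slip: you write $f=k^{-1}h(z)$, which gives $h(\bar S_z)=k\Theta h(z)=k^2\Theta f$, not $\Theta f$; you should instead choose $z$ with $h(z)=k^{-1}f$, so that $h(\bar S_z)=k\Theta\cdot k^{-1}f=\Theta f$. This is cosmetic since surjectivity lets you pick $z$ to hit any target. Second, since $h$ restricted to $\bar{\mathfrak u}^T$ is an algebra homomorphism (with kernel the two-sided ideal $\mathfrak l$) and $\mathcal Z$ is a subalgebra, $h(\mathcal Z)$ is automatically a subalgebra, so once you have $F\subseteq h(\mathcal Z)$ and $\Theta u(\mathfrak h)^{W\cdot}\subseteq h(\mathcal Z)$ you do not even need to argue separately that the sum lies inside; but your subspace observation is of course also sufficient.
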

\section{Representations of $\bar{\mathfrak u}$}
Fir each $\mu\in\Lambda_0$, we define an algebra homomorphism $\chi_{\mu}: \mathcal Z\longrightarrow F$ by $$\chi_{\mu}(z)=h(z)(\mu),\  z\in\mathcal Z.$$  Two weights $\mu, \nu\in \Lambda_0$ are referred to be  linked  and denoted  $\mu\sim \nu$, if there is $\sigma\in W$ such that $\mu+\rho=\sigma (\nu+\rho)$ (\cite{hu2}).\par
Let $U(\g'_{\0})$ be the universal enveloping algebra of $\g_{\0}'$ over $F$, and let $$U(\g'_{\0})^{G'}=\{u\in U(\g_{\0}')| g\cdot u=u\ \text{for all}\ g\in G'\}.$$
 Set $\mathfrak h_1=\mathfrak h\cap \g'_{\0}$.
 According to \cite[9.1-9.4]{jj}, there is an algebra isomorphism $$\pi: U(\g'_{\0})^{G'}\longrightarrow U(\mathfrak h_1)^{W\cdot}$$ with kernel $\mathfrak n_0^-U(\g_{\0}')+U(\g_{\0}')\mathfrak n_0^+$. \par

For each $\l\in\mathfrak h_1^*$, define an algebra homomorphism $\text{cen}_{\lambda}: U(\g_{\0}')^{G'}\longrightarrow F$ by $\text{cen}_{\l}(u)=\pi (u)(\lambda)$,  $u\in U(\g_{\0}')^{G'}$ (\cite[9.4]{jj}).\par
Let $\xi: U(\mathfrak h_1)\longrightarrow u(\mathfrak h_1)$ be the canonical homomorphism.
By \cite[Lemma 3.3]{hu2}, the restriction $\xi: U(\mathfrak h_1)^{W\cdot}\longrightarrow u(\mathfrak h_1)^{W\cdot}$ is an epimorphism.
It follows  that $\xi\pi:
U(\g'_{\0})^{G'}\longrightarrow u(\mathfrak h_1)^{W\cdot}$ is also an epimorphism.\par
Remark: The assumption on $p$ in \cite[Lemma 3.3]{hu2}  is satisfied in our case here, since $|W|=n!$.\par
For each $\l=\l_1\e_1+\cdots +\l_n\e_n\in\Lambda_0$. we have $$h_i^p(\l)-h_i(\l)=\l_i^p-\l_i=0\ \text{for}\  1\leq i\leq n.$$ Then we get $f(h)(\l)=\xi(f(h))(\l)$ for all $f(h)\in U(\mathfrak h)$.
 In particular, if $\l\in\Lambda_0\cap \mathfrak h_1^*$,  then we get $$\text{cen}_{\l}(u)=\pi(u)(\lambda)=\xi\pi (u)(\lambda)$$ for all $u\in U(\g_{\0}')^{G'}$.
\begin{theorem} Let $\mu, \nu\in \Lambda_0$. Then $\chi_{\mu}=\chi_{\nu}$ if and only if both $\mu$ and $\nu$ are atypical or both are typical with $\mu \sim \nu$.
\end{theorem}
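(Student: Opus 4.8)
The whole argument rests on the equality $h(\mathcal Z)=F+\Theta\,u(\h)^{W\cdot}$ established in the previous section: every element of $h(\mathcal Z)$ has the shape $c+\Theta g$ with $c\in F$ and $g\in u(\h)^{W\cdot}$. Since $\Theta g\in h(\mathcal Z)$ for each such $g$, evaluating at $\mu$ and at $\nu$ and cancelling the constant term shows
$$\chi_\mu=\chi_\nu\iff \Theta(\mu)\,g(\mu)=\Theta(\nu)\,g(\nu)\ \text{ for all }g\in u(\h)^{W\cdot}.$$
Putting $g=1$ gives $\Theta(\mu)=\Theta(\nu)$, so the hypothesis $\chi_\mu=\chi_\nu$ already forces $\mu$ and $\nu$ to be simultaneously typical or simultaneously atypical. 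Moreover, if both are atypical then $\Theta(\mu)=\Theta(\nu)=0$, and the displayed condition is automatic, so $\chi_\mu=\chi_\nu$. This settles the equivalence in the atypical case and reduces everything to proving, for $\mu,\nu$ both typical, that $\chi_\mu=\chi_\nu\iff\mu\sim\nu$.

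For the implication $\mu\sim\nu\Rightarrow\chi_\mu=\chi_\nu$ I would write $\mu=\sigma\cdot\nu$ with $\sigma\in W$. Since $W$ permutes $\Phi_0$, $\Theta$ is ordinarily $W$-invariant, hence dot-invariant, so $\Theta(\mu)=\Theta(\nu)$; and for any $g\in u(\h)^{W\cdot}$ one has $g(\mu)=g(\sigma\cdot\nu)=(\sigma^{-1}\cdot g)(\nu)=g(\nu)$. Thus the displayed condition holds and $\chi_\mu=\chi_\nu$ (typicality is not even used here).

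The substantial part is the converse: $\chi_\mu=\chi_\nu\Rightarrow\mu\sim\nu$ for typical $\mu,\nu$. From the equivalence above together with $\Theta(\mu)=\Theta(\nu)\ne 0$ one gets $g(\mu)=g(\nu)$ for every $g\in u(\h)^{W\cdot}$, so it remains to show that $u(\h)^{W\cdot}$ separates the $W$-dot-orbits inside $\Lambda_0$. I would do this by exhibiting enough invariants explicitly: fix $\rho=\sum_{i=1}^{n}(n-i)\e_i$, which satisfies $\rho(h_{\e_i-\e_{i+1}})=1$ and (as $p>n$) has coordinates in $F_p$, and for $1\le k\le n$ let $P_k\in U(\h)$ be the $k$-th elementary symmetric polynomial in $h_1+(n-1),h_2+(n-2),\dots,h_n$. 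Symmetry of $e_k$ makes $P_k$ fixed under the dot action, so its image $\bar P_k\in u(\h)$ lies in $u(\h)^{W\cdot}$, and because $\mu_i^p=\mu_i$ for $\mu\in\Lambda_0$ the value $\bar P_k(\mu)$ equals the $k$-th elementary symmetric function of the scalars $(\mu+\rho)(h_1),\dots,(\mu+\rho)(h_n)$. Hence $g(\mu)=g(\nu)$ for all $g$ forces $\prod_i\bigl(t-(\mu+\rho)(h_i)\bigr)=\prod_i\bigl(t-(\nu+\rho)(h_i)\bigr)$ in $F[t]$; comparing roots with multiplicity shows $\mu+\rho$ and $\nu+\rho$ have the same multiset of coordinates, i.e. $\mu+\rho=\sigma(\nu+\rho)$ for some $\sigma\in W$, which is $\mu\sim\nu$. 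An essentially equivalent route, matching the Harish--Chandra apparatus assembled just before the theorem, is to use that $\xi\pi\colon U(\g'_{\0})^{G'}\to u(\h_1)^{W\cdot}$ is onto together with the linkage principle for $\g'_{\0}$ (\cite{hu2,jj}), and then to recover $\mathfrak{gl}(n)$-linkage from $\mathfrak{sl}(n)$-linkage by separately tracking the central element $\hbar=\sum_i h_i$, on which $W$ acts trivially, so that the scalar $(\mu+\rho)(\hbar)$ is preserved as well. Either way, the only delicate point is the characteristic-$p$ bookkeeping---choosing $\rho$ inside $\Lambda_0$, passing between $U(\h)$ and $u(\h)$ on $\Lambda_0$, and (in the second route) bridging $\mathfrak{gl}(n)$ and $\mathfrak{sl}(n)$ via $\hbar$; everything else is formal once the description of $h(\mathcal Z)$ is in hand.
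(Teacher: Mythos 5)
Your argument is correct. The reduction to the statement that $u(\h)^{W\cdot}$ separates dot-$W$-orbits in $\Lambda_0$ is the same as in the paper, and your second route (invoking $\xi\pi\colon U(\g'_{\0})^{G'}\to u(\mathfrak h_1)^{W\cdot}$ together with the linkage principle, then recovering $\mathfrak{gl}(n)$-linkage from $\mathfrak{sl}(n)$-linkage via the central element $\hbar$) is exactly the paper's proof, which cites \cite[Corollary 9.4]{jj} at that point.

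Your first route is genuinely different and, in this setting, arguably cleaner: instead of quoting the linkage principle, you exhibit the elementary symmetric polynomials $P_k$ in the shifted variables $h_i+(n-i)$, observe they descend to $u(\h)^{W\cdot}$ (using $p>n$ so that $\rho\in\Lambda_0$ and the degrees stay below $p$), and then deduce $\mu\sim\nu$ by comparing the monic polynomials $\prod_i(t-(\mu+\rho)_i)$ and $\prod_i(t-(\nu+\rho)_i)$. This is self-contained and makes the ``separation of orbits'' completely explicit, at the mild cost of fixing a particular representative $\rho$. It also bypasses the paper's two-step bookkeeping (first $\mathfrak h_1$-restriction via $\text{cen}$, then the extra coordinate $\hbar$), since the symmetric polynomials already see the full $\mathfrak{gl}(n)$ weight. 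One small remark on the opening reduction: the paper establishes equality $h(\mathcal Z)=F+\Theta u(\h)^{W\cdot}$ only in Corollary 3.26, after first proving the inclusion $\subseteq$ in Theorem 3.14 and then producing the elements $\bar S_z$ in Lemma 3.25; your proof correctly relies on the full equality (you need $\Theta g\in h(\mathcal Z)$ for every $g\in u(\h)^{W\cdot}$, not merely containment in one direction), so it is worth flagging that this is the harder, just-established direction of that Corollary.
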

\begin{proof} Suppose $\chi_{\mu}=\chi_{\nu}$. Then  we have $h(z)(\mu)=h(z)(\nu)$ for all $z\in\mathcal Z$. Write $$h(z)=\Theta f(h)+c, \  f(h)\in u(\mathfrak h)^{W\cdot},\ c\in F.$$ Then we have $\Theta (\mu)f(h)(\mu)=\Theta(\nu)f(h)(\nu)$ for all $f(h)\in  u(\mathfrak h)^{W\cdot}$.\par If $\mu$ is atypical, then we obtain $\Theta (\nu)f(h)(\nu)=0$ for all $f(h)\in u(\mathfrak h)^{W\cdot}$. Let $f(h)=1$. Then we get $\Theta(\nu)=0$, and hence $\nu$ is atypical.\par
 Suppose both $\mu$ and $\nu$ are typical.  Let $h(z)=\Theta$. Then we get  $\Theta(\mu)=\Theta(\nu)$ and hence  $f(h)(\mu)=f(h)(\nu)$ for all $f(h)\in u(\mathfrak h)^{W\cdot}$.  Since $u(\mathfrak h_1)^{W\cdot}\subseteq u(\mathfrak h)^{W\cdot}$, we obtain $f(h)(\mu)=f(h)(\nu)$ for all $f(h)\in u(\mathfrak h_1)^{W\cdot}$. It follows that
 $$\text{cen}_{\mu}(u)=\xi\pi(u)(\mu)=\xi\pi(u)(\nu)=\text{cen}_{\nu}(u)$$ for all $u\in U(\g'_{\0})^{G'}$, and hence
 $\text{cen}_{\mu}=\text{cen}_{\nu}$, and whence $\mu_{|\mathfrak h_1}=(w\cdot \nu)_{|\mathfrak h_1}$ for some $w\in W$,  by \cite[Corollary 9.4]{jj}.\par Since $\hbar=\sum_{i=1}^nh_i\in u(\mathfrak h)^{W\cdot}$,  we also have $\hbar (\mu)=\hbar (\nu)$, implying that $\mu(\hbar)=(w\cdot\nu)(\hbar)$, and hence $\mu=w\cdot \nu$.\par
If both $\mu$ and $\nu$ are atypical, then Corollary 3.26 gives $\chi_{\mu}=\chi_{\nu}$. Assume both $\mu$ and $\nu$ are typical and $\mu\sim \nu$. For each $z\in \mathcal Z$,  we have by Lemma 3.7 that $h(z)\in u(\mathfrak h)^{W\cdot}$, implying that $\chi_{\mu}(z)=\chi_{\nu}(z)$.

\end{proof}
For  $\mu\in \Lambda_0$, let $\bar{\mathfrak u}_{\mu}$ denote the superalgebra $\bar{\mathfrak u}/\bar{\mathfrak u}\text{ker}\chi_{\mu}$. Let $$\chi^0_{\mu}: u(\g_{\0})^G\longrightarrow F$$ be the homomorphism defined by $\chi^0_{\mu}(z)=h(z)(\mu), \ z\in u(\g_{\0})^G$.   Define the quotient algebra $u^0_{\mu}=: u(\g_{\0})/u(\g_{\0})\text{ker}\chi^0_{\mu}$.\par
 Note: It is easy to see that $\hbar$ is an element of $u(\g_{\0})^G$, but not necessarily  in $\mathcal Z$.\par
\begin{definition}If $M=M_{\0}\oplus M_{\1}$ is a $\bar{\mathfrak u}_{\mu}$-module and also a $u(\g)-T$-module,  then we call $M$ a $\bar{\mathfrak u}_{\mu}-T$-module.\end{definition}
Example. For $\mu\in\Lambda_0$, the baby Verma module $Z(\mu)$  is a $\bar{\mathfrak u}_{\mu}-T$-module.
 A $u^0_{\mu}-T$-module can be defined similarly.
\begin{theorem} Assume $\mu$ is typical. \par
(1) If $N$ is a simple $u^0_{\mu}-T$-module, then $K(N)$ is a simple $\bar{\mathfrak u}_{\mu}-T$-module.\par
(2) If $M$ is a simple $\bar{\mathfrak u}_{\mu}-T$-module, then $M^{\g_1}=\{m\in M|xm=0\ \text{for all}\  x\in\g_1\}$ is a simple $u^0_{\mu}-T$-module.
\end{theorem}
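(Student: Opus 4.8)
The plan is to show that the assignments $N\mapsto K(N)$ and $M\mapsto M^{\g_1}$ are mutually inverse bijections between simple $u^0_{\mu}-T$-modules and simple $\bar{\mathfrak u}_{\mu}-T$-modules, with the typicality of $\mu$ used only through Proposition 2.3. I will use freely that a simple $u^0_{\mu}-T$-module is in fact a simple $u(\g_{\0})$-module $L(\nu)$ generated by a maximal vector of some weight $\nu\in\Lambda_0$ (its $T$-highest weight vector is such a vector and generates the module, by the baby-Verma argument), and that a $\bar{\mathfrak u}$-module is simple iff it is simple as a $u(\g)$-module, so that simplicity as a $u(\g)$-module implies simplicity as a $\bar{\mathfrak u}_{\mu}-T$-module.

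For (1): First I would check that $K(N)$ is a $\bar{\mathfrak u}_{\mu}-T$-module. It is a $u(\g)-T$-module by functoriality of induction from $u(\g^+)$, it is a $\bar{\mathfrak u}$-module by Lemma 3.2(1), and it is generated by $1\otimes N$ over $\wedge(\g_{-1})$. As $\mathcal Z$ is central in $\bar{\mathfrak u}$, it then suffices to see that $z\in\mathcal Z$ acts on $1\otimes N$ by $\chi_{\mu}(z)$. Decompose $z=z'+z''$ along the $G$-module splitting $\bar{\mathfrak u}_0=u(\g_{\0})\oplus\mathfrak a$, so $z'\in u(\g_{\0})^G$ and $z''\in\mathfrak a\cap\bar{\mathfrak u}^T$. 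Since every spanning monomial of $\mathfrak a$ carries a nontrivial right factor from $\wedge(\g_1)$, $z''$ annihilates $1\otimes N$ and lies in $\bar{\mathfrak u}\mathfrak n^+\cap\bar{\mathfrak u}^T=\ker h$, so $h(z)=h(z')$ and $z$ acts on $1\otimes N$ through $z'$, namely by $\chi^0_{\mu}(z')=h(z')(\mu)=h(z)(\mu)=\chi_{\mu}(z)$; hence $\ker\chi_{\mu}$ annihilates $K(N)$. Now write $N=L(\nu)$; the action of $u(\g_{\0})^G$ on $N$ is both $w\mapsto h(w)(\nu)$ and $\chi^0_{\mu}=(w\mapsto h(w)(\mu))$, and since $\Theta$ lies in $h(\mathcal Z)\subseteq h(u(\g_{\0})^G)$ by Corollary 3.26, this forces $\Theta(\nu)=\Theta(\mu)\neq 0$. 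Thus $\nu$ is typical, $\delta_{\nu}\neq 0$, and Proposition 2.3 shows $K(N)$ is simple as a $u(\g)$-module, hence as a $\bar{\mathfrak u}_{\mu}-T$-module.

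For (2): $\g_1$ is abelian and acts nilpotently on $M$ (any product of $d+1$ elements of $\g_1$ vanishes in $\bar{\mathfrak u}$), so $M^{\g_1}\neq 0$; it is a $u(\g_{\0})$-submodule because $[\g_{\0},\g_1]\subseteq\g_1$, and a $T$-submodule because $\g_1$ is a $G$-submodule and $M$ is a $u(\g)-T$-module. By Lemma 2.1 there is a simple $u(\g_{\0})-T$-submodule $N_0\subseteq M^{\g_1}$ together with an epimorphism $K(N_0)\twoheadrightarrow M$. I claim $N_0$ is a $u^0_{\mu}$-module: exactly as in (1), each $z\in\mathcal Z$ acts on $N_0\subseteq M$ through its $u(\g_{\0})^G$-part by $\chi_{\mu}(z)=h(z)(\mu)$, so the character of $u(\g_{\0})^G$ on $N_0$ and $\chi^0_{\mu}$ agree on $h(\mathcal Z)=F+\Theta u(\mathfrak h)^{W\cdot}$; since $\Theta(\mu)\neq 0$, dividing by $\Theta$ promotes this to agreement on all of $u(\mathfrak h)^{W\cdot}$, hence on $u(\g_{\0})^G$. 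By (1), $K(N_0)$ is then simple, so $K(N_0)\twoheadrightarrow M$ is an isomorphism. Finally, on $K(N_0)\cong\wedge(\g_{-1})\otimes N_0$ a $\g_1$-invariant of positive exterior degree $k$ would generate — over $\wedge(\g_{-1})$ and $u(\g_{\0})$ only, since higher $\g_1$-monomials kill it — a proper submodule contained in $\bigoplus_{j\geq k}\wedge^j(\g_{-1})\otimes N_0$; hence $K(N_0)^{\g_1}=1\otimes N_0$, and transporting through the isomorphism gives $M^{\g_1}\cong 1\otimes N_0\cong N_0$, a simple $u^0_{\mu}-T$-module.

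The main obstacle is the central-character bookkeeping: showing that $z\in\mathcal Z$ acts on $1\otimes N$ (respectively on $N_0$) purely through its $u(\g_{\0})$-component and that this component is governed by the Harish-Chandra map $h$, and then the ``division by $\Theta$'' step upgrading agreement of characters on $F+\Theta u(\mathfrak h)^{W\cdot}$ to agreement on all of $u(\mathfrak h)^{W\cdot}$ — this is the one place where typicality $\Theta(\mu)\neq 0$ is genuinely used. With that settled, part (1) is Proposition 2.3 and part (2) reduces to the short grading identity $K(N_0)^{\g_1}=1\otimes N_0$.
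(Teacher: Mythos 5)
Your proof is correct and follows the same overall strategy as the paper: identify the highest weight $\nu$ (resp.\ $\l$) of the $\g_{\bar 0}$-module involved, show it is typical, and invoke Proposition 2.3 to get simplicity of the Kac module, then verify the $\ker\chi_{\mu}$ (resp.\ $\ker\chi^0_{\mu}$) annihilation. The genuine divergence is in the central-character bookkeeping. The paper, in both parts, first establishes an equality of Harish--Chandra characters ($\chi^0_{\l}=\chi^0_{\mu}$ in (1), $\chi_{\l}=\chi_{\mu}$ in (2)), then routes through linkage: in (1) by invoking \cite[Theorem~3.1]{hu2} on $\g'_{\bar 0}$ plus the $\hbar$-trick to obtain $\l\sim\mu$, and in (2) by invoking Theorem~4.1 of the present paper to obtain $\l\sim\mu$ and typicality simultaneously. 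You avoid linkage entirely: in (1) you use the surjectivity of $h\colon u(\g_{\bar 0})^G\to u(\mathfrak h)^{W\cdot}$ to pull back $\Theta$ and read off $\Theta(\nu)=\Theta(\mu)$ directly; in (2) you observe that the character equality is a priori only known on $h(\mathcal Z)=F+\Theta u(\mathfrak h)^{W\cdot}$ (Corollary~3.26) and promote it to $u(\mathfrak h)^{W\cdot}$ by dividing by $\Theta(\mu)\neq 0$, which then feeds back into part (1). This is a cleaner and more self-contained route, at the price of needing $h(\mathcal Z)=F+\Theta u(\mathfrak h)^{W\cdot}$ rather than just $h(\mathcal Z)\subseteq\Theta u(\mathfrak h)^{W\cdot}+F$; the paper's route also needs the full Corollary~3.26 (through Theorem~4.1), so nothing extra is really being assumed. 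A few small imprecisions worth noting: the appeal to ``Lemma~2.1'' in part (2) is only morally correct, since $M$ is simple as a $\bar{\mathfrak u}_{\mu}-T$-module, not necessarily as a $u(\g)$-module --- but the underlying Frobenius-reciprocity argument you sketch is fine and is what the paper itself does; and the claim that $\mathfrak a\cap\bar{\mathfrak u}^T\subseteq\ker h$ deserves a word (e.g.\ evaluating on maximal vectors of baby Verma modules and using Lemma~3.5), though it is true and is also implicit in the paper's remark that $h$ restricted to $u(\g_{\bar 0})^T$ is the usual Harish--Chandra map. Finally, your grading argument that $K(N_0)^{\g_1}=1\otimes N_0$ should make explicit that the $\g_1$-invariants are homogeneous for the $\wedge(\g_{-1})$-grading (because the $\g_1$-action strictly lowers that degree), which is what licenses restricting attention to a homogeneous invariant of degree $k\geq 1$; the paper phrases this instead via the decomposition $M=L(\l)\oplus\g_{-1}M$, but the two are equivalent.
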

\begin{proof} (1) By definition $N$ is a restricted simple $\g_{\0}$-module annihilated by $\text{ker}\chi^0_{\mu}$. Let $v_{\l}\in N$ be the  maximal vector  of  weight $\l$. For each $z\in u(\g_{\0})^G$, we have $$zv_{\l}=h(z)v_{\l}=h(z)(\l)v_{\l}=\chi^0_{\l}(z)v_{\l}$$ and $$zv_{\l}=(z-\chi^0_{\mu}(z))v_{\l}+\chi^0_{\mu}(z)v_{\l}=\chi^0_{\mu}(z)v_{\l},$$ since $z-\chi^0_{\mu}(z)\in\text{ker}\chi^0_{\mu}$. Therefore, we get $\chi^0_{\l}=\chi^0_{\mu}$. Applying \cite[Theorem 3.1]{hu2}, we get $\l_{|\mathfrak h_1}\sim \mu_{|\mathfrak h_1}$,  and hence $\l_{|\mathfrak h_1}=(\sigma\cdot \mu)_{|\mathfrak h_1}$ for some $\sigma\in W$.\par
By \cite[Lemma 3.2, 3.3]{hu2}, the Harish-Chandra morphism $h: u(\g_{\0}')^{G'}\longrightarrow u(\mathfrak h_1)^{W\cdot}$ is an epimorphism. Note that $$u(\g_{\0})^G\cong u(\g_{\0}')^{G'}\otimes u(F\hbar)$$ and $h(\hbar)=\hbar\in u(\mathfrak h)^{W\cdot}$. Then $h: u(\g_{\0})^{G}\longrightarrow u(\mathfrak h)^{W\cdot}$ is also an epimorphism. From $\chi^0_{\l}(\hbar)=\chi^0_{\mu}(\hbar)$, we obtain $\hbar (\l)=\hbar (\mu)$, which gives $\l(\hbar)=(\sigma\cdot \mu)(\hbar)$, since $\hbar \in u(\mathfrak h)^{W\cdot}$.  Thus, we obtain $\l\sim \sigma\cdot \mu$. Then
 $\l$ is also typical by Theorem 4.1. Hence $K(N)$ is simple as $u(\g)$-module by Proposition 2.3, and
  also as $\bar{\mathfrak u}$-module by Lemma 3.2.\par
  Let the $T$-action on $K(N)$ be induced from that of $N$. It remains to show that $K(N)$ is annihilated by $\text{ker}\chi_{\mu}$. To see this, let $z\in \text{ker}\chi_{\mu}$. Then we have $$\begin{aligned} zv_{\l}&=h(z)v_{\l}\\&=h(z)(\l)v_{\l}\\&=h(z)(\mu)v_{\l}\\&=\chi_{\mu}(z)v_{\l}\\&=0,\end{aligned}$$ and hence $zK(N)=0$.\par
(2) Let $M=M_{\0}\oplus M_{\1}$ and let $m\in M_{\0}\cup M_{\1}$ be a nonzero $T$-weight vector. Set $M'=\wedge (\g_1)m\subseteq M$. Since  $\wedge^i(\g_1)=0$ in $\bar{\mathfrak u}$ is for $i>d$, we have $$M'=\wedge^{\leq d} (\g_1)m.$$  Let $i_0$ be the largest integer such that $\wedge^{i_0}(\g_1)m\neq 0$. Then we have  $0\leq i_0\leq d$ and $\g_1 \wedge^{i_0}(\g_1)m=0$. Therefore we have $M^{\g_1}\neq 0$.  Clearly $M^{\g_1}$ is $\mathbb Z_2$-graded.\par
 It is easy to see that $M^{\g_1}$ is a $T$-submodule and a $\g_{\0}$-submodule of $M$. Since $M$ is  a restricted $\g_{\0}$-module, so is $M^{\g_1}$. It is no loss of generality to assume that $M^{\g_1}_{\0}\neq 0$.\par  Let $L(\l)\subseteq M^{\g_1}_{\0}$ be a simple $u(\g_{\0})$-submodule of highest weight $\l$. Then $L(\l)$ is a $u(\g_{\0})-T$-submodule. The inclusion $L(\l)\subseteq M_{\0}$ induces a $u(\g)$-module homomorphism from $K(L(\l))$ onto $M$. Since $M$ is a $\bar{\mathfrak u}_{\mu}$-module,  by applying $z\in \mathcal Z$ to the maximal vector $v_{\l}\in L(\l)$ we obtain $\chi_{\l}=\chi_{\mu}$, implying that $\l\sim \mu$ by Theorem 4.1. It follows that $\l$ is also typical, and hence $K(L(\l))$ is simple, and whence $M\cong K(L(\l))$. We simply  write $M=K(L(\l))$. Then   $L(\l)\subseteq M^{\g_1}$.\par  We claim that $M^{\g_1}= L(\l)$. To see this, note that $$M=L(\l)\oplus \g_{-1}M.$$ Let $N=M^{\g_1}\cap \g_{-1}M$. Then we have $$M^{\g_1}=L(\l)+N.$$  Let $\tilde N\subseteq M$ be the $\g$-submodule generated by $N$. By the definition of $K(L(\l))$ we  have $\tilde N\subseteq \g_{-1}M$, and hence $\tilde N\cap L(\l)=0$. Since $M$ is simple,  we have $\tilde N=0$ and hence $N=0$. So the claim follows.\par
 It remains to show that $L(\l)$ is annihilated by $\text{ker}\chi^0_{\mu}$.  Let $z\in u(\g_{\0})^G$.  Since $\l\sim \mu$,  we have $$\begin{aligned} zv_{\l}&=h(z)(\l)v_{\l}\\ &=h(z)(\mu)v_{\l}\\ &=\chi^0_{\mu}(z)v_{\l}.\end{aligned}$$  Then $z$ acts on $L(\l)$ as a multiplication by $\chi^0_{\mu}(z)$ or, equivalently, $L(\l)$ is annihilated by $\text{ker}\chi^0_{\mu}$. It follows that $L(\l)$ is a simple $u^0_{\mu}-T$-module.
\end{proof}
\vskip 0.4truein
\def\refname{\centerline{\bf REFERENCES}}

\end{document}